\numberwithin{equation}{section}
\def\Line(#1,#2)(#3,#4){\qbezier(#1,#2)(#1,#2)(#3,#4)}
\newcommand{\inp}[2][]{\left(#1, #2\right)}
\newcommand{\gnp}[2][]{\langle#1, #2\rangle}
\newtheorem{remark}{Remark}[section]
\newtheorem{lemma}{Lemma}[section]
\newtheorem{corollary}{Corollary}[section]
\newtheorem{theorem}{Theorem}[section]
\def\Tc{\mathcal{T}}
\def\Qc{\mathcal{Q}}
\def\BDM{\mathcal{BDM}}
\def\BDDF{\mathcal{BDDF}}
\def\RT{\mathcal{R\!T}}
\def\Pc{\mathcal{P}}
\def\X{\mathbb{X}}
\def\W{\mathbb{W}}
\def\R{\mathbb{R}}
\def\M{\mathbb{M}}
\def\S{\mathbb{S}}
\def\N{\mathbb{N}}
\def\x{\mathbf{x}}
\def\s{\sigma}
\def\t{\tau}
\def\g{\gamma}
\def\tet{\theta}
\def\del{\delta}
\def\r{\mathbf{r}}
\def\As{A_{\s\s}}
\def\Au{A_{\s u}}
\def\Ag{A_{\s\g}}
\def\Eh{\hat{E}}
\def\Ah{\hat{A}}
\def\rh{\hat{\r}}
\def\eh{\hat{e}}
\def\nh{\hat{n}}
\def\xh{\hat{x}}
\def\yh{\hat{y}}
\def\sh{\hat{\s}}
\def\th{\hat{\t}}
\def\ch{\hat{\chi}}
\def\teth{\hat{\tet}}
\def\delh{\hat{\del}}
\def\Xh{\hat{\X}}
\def\Vh{\hat{V}}
\def\tr{\operatorname{tr\,}}
\def\skew{\operatorname{Skew}}
\def\curl{\operatorname{curl}}
\def\dvr{\operatorname{div}}          
\def\dvrg{\operatorname{div}}  
\def\DF{D\!F}
\def\O{\Omega}
\def\Gn{\Gamma_N}
\def\Gd{\Gamma_D}
\begin{document}

\title{A multipoint stress mixed finite element method for elasticity on quadrilateral grids}

\author{Ilona Ambartsumyan\thanks{Department of Mathematics, University of
		Pittsburgh, Pittsburgh, PA 15260, USA;~{\tt \{ila6@pitt.edu, elk58@pitt.edu, yotov@math.pitt.edu\}}. Partially supported by DOE grant DE-FG02-04ER25618 and NSF grants DMS 1418947 and DMS 1818775.} \thanks{Institute for Computational Engineering and Sciences, The University of Texas at Austin, Austin, TX 78712, USA;
{\tt \{ailona@austin.utexas.edu, ekhattatov@austin.utexas.edu\}}.}~\and
		Eldar Khattatov\footnotemark[1] \footnotemark[2] \and
		Jan Nordbotten\footnotemark[3]\thanks{Department of Mathematics, University of Bergen, Bergen, 7803, Norway;~{\tt \{Jan.Nordbotten@uib.no\}}.
Funded in part through Norwegian Research Council grants 250223, 233736, 228832.
}\and
		Ivan Yotov\footnotemark[1]~}

\date{\today}
\maketitle
\begin{abstract}
We develop a multipoint stress mixed finite element method for linear
elasticity with weak stress symmetry on quadrilateral grids, which can
be reduced to a symmetric and positive definite cell centered system.
The method is developed on simplicial grids in \cite{msmfe_simp}. The
method utilizes the lowest order Brezzi-Douglas-Marini finite element
spaces for the stress and the trapezoidal quadrature rule in order to
localize the interaction of degrees of freedom, which allows for local
stress elimination around each vertex. We develop two variants of the
method. The first uses a piecewise constant rotation and results in a
cell-centered system for displacement and rotation. The second uses a
continuous piecewise bilinear rotation and trapezoidal quadrature rule
for the asymmetry bilinear form. This allows for further elimination
of the rotation, resulting in a cell-centered system for the
displacement only. Stability and error analysis is performed for both
methods. First-order convergence is established for all variables in
their natural norms. A duality argument is employed to prove second
order superconvergence of the displacement at the cell
centers. Numerical results are presented in confirmation of the
theory.
\end{abstract}

\section{Introduction}
Stress-displacement mixed finite element (MFE) elasticity formulations
have been studied extensively due to their local momentum conservation
with continuous normal stress and locking-free approximation, see
\cite{brezzi2008mixed} and references therein. These methods result in
saddle point type algebraic systems, which may be expensive to
solve. In this work we develop two stress-displacement MFE methods for
elasticity on quadrilateral grids that can be reduced to symmetric and
positive definite cell centered systems using a mass lumping
technique. We have previously developed such methods on simplicial
grids in \cite{msmfe_simp}. Even though the formulation is similar,
the stability and error analysis on quadrilaterals differ
significantly from those on simplices. The methods are referred to as
multipoint stress mixed finite element (MSMFE) methods, adopting the
terminology of the multipoint stress approximation (MPSA) method
developed in
\cite{Jan-IJNME,nordbotten2015convergence,keilegavlen2017finite}.  Our
approach is motivated by the multipoint flux mixed finite element
(MFMFE) method
\cite{wheeler2006multipoint,Ing-Whe-Yot,WheXueYot-nonsym} for Darcy
flow, and its closely related multipoint flux approximation (MPFA)
method
\cite{aavat2002introduction,aavat1998discretization,edwards2002unstructured,edwards1998finite,klausen2006robust}. The
MFMFE method utilizes the lowest order Brezzi-Douglas-Marini $\BDM_1$
spaces on simplices and quadrilaterals \cite{brezzi1985two}, as well
as an enhanced Brezzi-Douglas-Duran-Fortin $\BDDF_1$ space
\cite{brezzi1991mixed} on hexahedral grids. There are two variants of
the MFMFE method - symmetric and non-symmetric. The symmetric version
is designed for simplices \cite{wheeler2006multipoint}, as well as
quadrilaterals and hexahedra that are $O(h^2)$-perturbations of
parallelograms and parallelepipeds
\cite{wheeler2006multipoint,Ing-Whe-Yot}. It is related to the
symmetric MPFA method
\cite{aavat2002introduction,Klausen.R;Winther.R2006,AEKWY}. The
symmetric MFMFE method is always well posed, but its convergence may
deteriorate on general quadrilaterals or hexahedra. The non-symmetric
MFMFE method \cite{WheXueYot-nonsym}, which is related to the
non-symmetric MPFA method
\cite{edwards1998finite,aavat2002introduction,klausen2006robust},
exhibits good convergence on general quadrilaterals or hexahedra, but
it may become ill-posed due to loss of coercivity if the grids are too
distorted.

The MSMFE methods on quadrilaterals we develop in this paper are
symmetric and are related to the symmetric MFMFE method.  The methods
are based on the $\BDM_1$ spaces on quadrilaterals. We consider the
formulation with weakly imposed stress symmetry, for which there exist
MFE spaces with $\BDM_1$ degrees of freedom for the stress. In this
formulation the symmetry of the stress is imposed weakly using a
Lagrange multiplier, which is a skew-symmetric matrix and has a
physical meaning of rotation.  Our first method, referred to as
MSMFE-0, is based on the spaces $\BDM_1\times\Qc_0\times\Qc_0$
developed in \cite{Awanou-rect-weak,arnold2015mixed}, using $\BDM_1$
stress and piecewise constant displacement and rotation. The $\BDM_1$
space has two normal degrees of freedom per edge, which can be
associated with the two vertices. An application of the trapezoidal
quadrature rule for the stress bilinear form results in localizing the
interaction of stress degrees of freedom around mesh vertices. The
stress is then locally eliminated and the method is reduced to a
symmetric and positive definite cell centered system for the
displacement and rotation. Our second method, MSMFE-1, is based on the
spaces $\BDM_1\times\Qc_0\times\Qc_1$, with continuous bilinear
rotation.  To the best of our knowledge, these spaces have not been
studied in the literature. In this method we employ the trapezoidal
quadrature rule both for the stress and the asymmetry bilinear
forms. This allows for further local elimination of the rotation after
the initial stress elimination, resulting in a symmetric and positive
definite cell centered system for the displacement only. To the best
of our knowledge, this is the first MFE method for elasticity on
quadrilaterals with such property.

We develop stability and error analysis for the two methods. The
stability arguments follow the framework established in
\cite{arnold2015mixed}, with modifications to account for the
quadrature rules. The argument in \cite{arnold2015mixed} explores
connections between stable mixed elasticity elements and stable mixed
Stokes and Darcy elements. In the case of the MSMFE-0 method, the two
stable pairs are $\mathcal{SS}_2\times\Qc_0$ for Stokes and
$\BDM_1\times\Qc_0$ for Darcy. Since the only term with quadrature is
the stress bilinear form, the stability argument in
\cite{arnold2015mixed} can be modified in a relatively straightforward
way.  Proving stability of the MSMFE-1 method is more difficult. In
this case the Stokes pair is $\mathcal{SS}_2\times\Qc_1$. The
difficulty comes from the fact that the quadrature rule is also
applied to the asymmetry bilinear forms, which necessitates
establishing an inf-sup condition for $\mathcal{SS}_2\times\Qc_1$ with
trapezoidal quadrature in the divergence bilinear form. We do this by
a macroelement argument motivated by \cite{stenberg1984analysis}.  It
is based on establishing a local macroelement inf-sup condition and
combining the locally constructed velocities to obtain the global
inf-sup condition. We note that the proof is very different from the
argument on simplices in \cite{msmfe_simp}. In particular, on
simplices one can establish a local inf-sup condition using vectors
that are zero on the boundary of the macroelement, which can be
utilized in the global construction. This is not the case on
quadrilaterals, which complicates the global construction
significantly. The reader is referred to
Section~\ref{inf-sup-stokes-section}, where the global Stokes inf-sup
condition is established under a smoothness assumption on the grid
given in \ref{M2}. We would like to note that this result is important
by itself, as it deals with the fundamental issue of inf-sup stability
for Stokes finite element approximation with quadrature.  We proceed
with establishing first order convergence for the stress in the
$H(\dvr)$-norm and for the displacement and rotation in the $L^2$-norm
for both methods on elements that are $O(h^2)$-perturbations of
parallelograms. This restriction is similar to the one in symmetric
MPFA and MFMFE methods
\cite{wheeler2006multipoint,Klausen.R;Winther.R2006}.  Again, the
arguments are very different from the simplicial case, since the map
to the reference element is non-affine (bilinear), which complicates
the estimation of the quadrature error. We further employ a duality
argument to prove second order superconvergence of the displacement at
the cell centers.

The rest of the paper is organized as follows. The model problem and
its MFE approximation are presented in Section~2.  The two methods and
their stability are developed in Sections 3 and 4, respectively. The
error analysis is performed in Section 5. Numerical results are
presented in Section 6.

\section{Model problem and its MFE approximation}

Let $\O$ be a simply connected bounded polygonal domain of $\R^2$ occupied by a
linearly elastic body. We write $\M$, $\S$ and $\N$ for the spaces of
$2\times 2$ matrices, symmetric matrices and skew-symmetric matrices,
all over the field of real numbers, respectively. The material
properties are described at each point $\x \in \O $ by a compliance
tensor $A = A(x)$, which is a symmetric, bounded and uniformly
positive definite linear operator acting from $\S$ to $\S$. We also
assume that an extension of $A$ to an operator $\M \to \M$ still
possesses the above properties. We will utilize the usual divergence
operator $\dvr$ for vector fields. When applied to a matrix field, it
produces a vector field by taking the divergence of each row. We will
also use the curl operator defined as
%
$    \curl{\phi} = (\partial_2 \phi, -\partial_1 \phi) $
%
for a scalar function $\phi$. Consequently, for a
vector field, the curl operator produces a matrix field, by acting row-wise.

Throughout the paper, $C$ denotes a generic positive constant that is
independent of the discretization parameter $h$. We will also use the
following standard notation. For a domain $G \subset \R^2$, the
$L^2(G)$ inner product and norm for scalar and vector valued functions
are denoted $\inp[\cdot]{\cdot}_G$ and $\|\cdot\|_G$,
respectively. The norms and seminorms of the Sobolev spaces
$W^{k,p}(G),\, k \in \R, p>0$ are denoted by $\| \cdot \|_{k,p,G}$ and
$| \cdot |_{k,p,G}$, respectively. The norms and seminorms of the
Hilbert spaces $H^k(G)$ are denoted by $\|\cdot\|_{k,G}$ and $| \cdot
|_{k,G}$, respectively. We omit $G$ in the subscript if $G = \O$. For
a section of the domain or element boundary $S$ we
write $\gnp[\cdot]{\cdot}_S$ and $\|\cdot\|_S$ for the $L^2(S)$ inner
product (or duality pairing) and norm, respectively. We will also use
the space $H(\dvrg; \O) = \{v \in L^2(\O, \R^2) : \dvr v \in L^2(\O)\}$
equipped with the norm
$\|v\|_{\dvr} = \left( \|v\|^2 + \|\dvr v\|^2 \right)^{1/2}$.

Given a vector field $f$ on $\Omega$ representing body forces, equations of static elasticity in Hellinger-Reissner form determine the stress $\sigma$ and the displacement $u$ satisfying the constitutive and equilibrium equations respectively:
\begin{align}
    A\s = \epsilon(u), \quad \dvr \s = f \quad \text{in } \O, \label{elast-1}
\end{align}
together with the boundary conditions 
\begin{align}
    u = g \ \text{ on } \Gd, \quad  \s\,n = 0 \ \text{ on } \Gn, \label{elast-2}
\end{align}
where $\epsilon(u) = \frac{1}{2}\left(\nabla u + (\nabla u)^T\right)$
and $\partial\O = \Gd \cup \Gn$. We assume for simplicity that $\Gd \neq \emptyset$.

We consider a weak formulation for \eqref{elast-1}--\eqref{elast-2}, in
which the stress symmetry is imposed weakly, using the
Lagrange multiplier $\g = \skew(\nabla u)$,
$\skew(\tau) = \frac12(\tau - \tau^T)$, from the space of
skew-symmetric matrices:
find $(\s, u, \g) \in \X \times V \times \W$ such that:
\begin{align}
    \inp[A\s]{\t} + \inp[u]{\dvr \t} + \inp[\g]{\t} &= \gnp[g]{\t\, n}_{\Gd}, &\forall \t &\in \X, \label{weak-1}\\
    \inp[\dvr \s]{v} &= \inp[f]{v}, &\forall v &\in V, \label{weak-2}\\
	\inp[\s]{\xi} &= 0, &\forall \xi &\in \W, \label{weak-3}
\end{align}
where the corresponding spaces are
$$\X = \left\{ \t\in H(\dvrg,\Omega,\M) : \t\,n = 0 \text{ on } \Gn  \right\}, \quad V = L^2(\Omega, \R^2), \quad W = L^2(\Omega, \N).
$$
Problem \eqref{weak-1}--\eqref{weak-3} has a unique solution \cite{arnold2007mixed}.

\subsection{Mixed finite element method} 
Let $\Tc_h$ be a shape-regular and quasi-uniform quadrilateral
partition of $\O$ \cite{ciarlet2002finite}, with $h=\max_{E\in \Tc_h}
\text{diam} (E)$. For any element $E \in \Tc_h$ there exists a bilinear
bijection mapping $F_E: \Eh \to E$, where $\Eh = [-1,1]^2$ is the reference
square. Denote the Jacobian matrix by $\DF_E$ and let $J_E =
|\operatorname{det} (\DF_E)|$. For $\x = F_E(\hat\x)$ we have 
$$
\DF^{-1}_E (\x) = (\DF_E)^{-1}(\hat \x), \qquad J_{F^{-1}_E}(\x) = \frac{1}{J_E(\hat\x)}.
$$
Let $\Eh$ has 
vertices $\rh_1 = (0,0)^T$, $\rh_2 = (1,0)^T$, $\rh_3 = (1,1)^T$ and 
$\rh_4 = (0,1)^T$ with unit outward normal vectors to the edges 
denoted by $\nh_i$, $i = 1,\ldots,4$, see Figure~\ref{elements}. 
We denote by $\r_i =
(x_i, y_i)^T$, $i = 1,\dots,4$, the corresponding vertices of the
element $E$, and by $n_i$, $i=1,\dots,4$, the corresponding 
unit outward normal vectors. The bilinear mapping $F_E$ and its Jacobian matrix
are given by
\begin{align}
& F_E(\rh) = \r_1 + \r_{21}\xh + \r_{41}\yh + (\r_{34} - \r_{21})\xh\yh, \label{mapping}\\
& \DF_E = \left[ \r_{21}, \r_{41}\right] 
+ \left[ (\r_{34}-\r_{21})\yh, (\r_{34}-\r_{21})\xh \right],  \label{df-map1}
\end{align}
where $\r_{ij} = \r_i - \r_j$. 
It is easy to see that the shape-regularity and quasi-uniformity of the grids imply that
$\forall E \in \Tc_h$,
\begin{align}
    \| \DF_E \|_{0,\infty, \Eh} \sim h, \quad \| \DF^{-1}_E \|_{0,\infty, \Eh} \sim h^{-1}, \quad \| J_E \|_{0,\infty, \Eh} \sim h^2 \quad \text{and} \quad \| J_{F_E^{-1}} \|_{0,\infty, \Eh} \sim h^{-2}, \label{scaling-of-mapping}
\end{align}
where the notation $a\sim b$ means that there exist positive constants
$c_0,\, c_1$ independent of $h$ such that $c_0 b \le a \le c_1 b$.

The finite element spaces 
$\X_h \times V_h \times \W_h^{k} \subset \X \times V \times \W$ are the triple
$(\BDM_1)^2 \times (\Qc_0)^2 \times (\Qc_k)^{2\times 2,skew}$, $k = 0,1$, where 
$\Qc_k$ denotes the space of polynomials of degree at most $k$ in each variable and
each row of an element of $\X_h$ is a vector in $\BDM_1$. On the reference square the
spaces are defined as
\begin{align}
\hat{\X}(\Eh) &= 
\left(\Pc_1(\Eh)^2 + r_1\,\curl(\xh^2 \yh) + s_1\, \curl (\xh\yh^2)\right)
\times \left(\Pc_1(\Eh)^2 + r_2\,\curl(\xh^2 \yh) + s_2\, \curl (\xh\yh^2)\right) \nonumber\\ 
&= \begin{pmatrix} \alpha_1 \xh + \beta_1 \yh + \g_1 + r_1\xh^2 + 2s_1\xh\yh & \alpha_2 \xh + \beta_2 \yh + \g_2 - 2r_1\xh\yh - s_1\yh^2 \\ \alpha_3 \xh + \beta_3 \yh + \g_3 + r_2\xh^2 + 2s_2\xh\yh & \alpha_4 \xh + \beta_4 \yh + \g_4 - 2r_2\xh\yh - s_2\yh^2 \end{pmatrix}, \label{ref-spaces}\\
    \Vh(\Eh) & = \left(\Qc_0(\Eh)\right)^2, \quad 
    \hat{\W}^k(\Eh) = \begin{pmatrix} 0 & p \\ -p & 0 \end{pmatrix}, \quad p\in \Qc_k(\Eh) \mbox{ for } k = 0,1,   \nonumber
\end{align}
where $\alpha_i, \beta_i, \g_i, r_i, s_i$ are real constants.  Note
that $\dvr \Xh(\Eh) = \Vh(\Eh)$ and for all $\th \in \Xh
(\Eh)$, $\th\, n_{\eh} \in \Pc_1(\eh)^2$ on any edge $\eh$ of $\Eh$.
It is well known \cite{brezzi1985two,brezzi1991mixed} that the degrees of freedom of
$\BDM_1(\Eh)$ can be chosen as the values of the normal components
at any two points on each edge $\eh \subset \partial \Eh$. In this
work we choose these points to be the vertices of $\eh$, see Figure
\ref{elements}. This is motivated by the trapezoidal quadrature rule,
introduced in the next section. The spaces on any element $E \in \Tc_h$ 
are defined via the transformations
\begin{align}\label{maps}
 \t \overset{\Pc}{\leftrightarrow} \hat{\t} : 
\t^T = \frac{1}{J_E} \DF_E \hat{\t}^T \circ F_E^{-1}, 
\quad
    v \leftrightarrow \hat{v} : v = \hat{v} \circ F_E^{-1}, \quad
    \xi \leftrightarrow \hat{\xi} : \xi = \hat{\xi} \circ F_E^{-1},
\end{align}
where $\t \in \X(E)$, $v \in V(E)$, and $\xi \in \W(E)$. Note that the Piola 
transformation (applied row-by-row) is used for $\X(E)$. It
preserves the normal components of the stress tensor
on edges, and it satisfies, for all $\t \in \X(E)$, $v \in V(E)$, and $\phi \in H^1(E)$,
\begin{equation}
(\dvr \t, v)_E = (\dvr \hat{\t}, \hat{v})_{\Eh}, 
\quad \langle \t\, n_e, v \rangle _e 
= \langle \hat{\t}\, \hat{n}_{\eh}, \hat{v} \rangle _{\eh},
\quad \text{and} \quad \curl \phi \overset{\Pc}{\leftrightarrow} \curl \hat\phi
. \label{prop-piola}
\end{equation}
The spaces on $\Tc_h$ are defined by
\begin{align} \label{spaces}
    \X_h &= \{\t \in \X: \t|_E \overset{\Pc}{\leftrightarrow} \hat{\t},\: \hat{\t} \in \hat{\X}(\Eh) \quad \forall E\in\mathcal{T}_h\}, \nonumber \\
    V_h &= \{v \in V: v|_E \leftrightarrow \hat{v},\: \hat{v} \in \hat{V}(\Eh) \quad \forall E\in\mathcal{T}_h\}, \\
    \W_h^0 & = \{\xi \in \W: \xi|_E \leftrightarrow \hat{\xi},
\: \hat{\xi} \in \hat{\W}^0(\Eh) \quad \forall E\in\mathcal{T}_h\}, \nonumber \\
\W_h^1 &  = \{\xi \in \mathcal{C}(\O,\N) \subset \W: \xi|_E \leftrightarrow \hat{\xi},\: \hat{\xi} \in \hat{\W}^1(\Eh) \quad \forall E\in\mathcal{T}_h\}. \nonumber
\end{align}
Note that $\W_h^1 \subset H^1(\Omega)$, since it contains continuous
piecewise $\Qc_1$ functions.

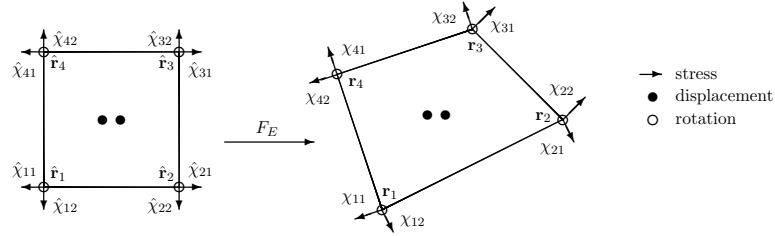
\begin{figure}	
	\setlength{\unitlength}{1.0mm}
\scalebox{.6}{
	\begin{picture}(70,45)(-70,-5)
	\thicklines
	\Line(5,5)(35,5)
	\Line(5,5)(5,35)
	\Line(5,35)(35,35)
	\Line(35,35)(35,5)    
	\Line(80,0)(120,20)
	\Line(80,0)(70,30)
	\Line(120,20)(100,40)
	\Line(100,40)(70,30)   
	
	\put(6.5,7){$\rh_{1}$}
	\put(30.5,7){$\rh_{2}$}
	\put(30.5,31.5){$\rh_{3}$}
	\put(6.5,31.5){$\rh_{4}$}
	\put(80,3){$\r_{1}$}
	\put(114,20){$\r_{2}$}
	\put(99,35.5){$\r_{3}$}
	\put(72.5,27.5){$\r_{4}$}
	
	\put(5,5){\vector(-1,0){5}} 
	\put(5,5){\vector(0,-1){5}} 
	\put(-2,8){$\ch_{11}$}
	\put(7,0){$\ch_{12}$}
	\put(35,35){\vector(1,0){5}} 
	\put(35,35){\vector(0,1){5}} 
	\put(28,0){$\ch_{22}$}
	\put(37,8){$\ch_{21}$}
	\put(5,35){\vector(-1,0){5}} 
	\put(5,35){\vector(0,1){5}} 
	\put(37,30){$\ch_{31}$}
	\put(28,37){$\ch_{32}$}
	\put(35,5){\vector(1,0){5}} 
	\put(35,5){\vector(0,-1){5}} 
	\put(-2,30){$\ch_{41}$}
	\put(7,37){$\ch_{42}$}
	\put(18,20){\circle*{2}}
	\put(22,20){\circle*{2}}
	\put(5,5){\circle{2}}
	\put(5,35){\circle{2}}
	\put(35,5){\circle{2}}
	\put(35,35){\circle{2}}
		
	\put(80,0){\vector(1,-2){2.5}} 
	\put(80,0){\vector(-3,-1){6}}
	\put(71,2){$\chi_{11}$}
	\put(84,-3){$\chi_{12}$}	
	\put(120,20){\vector(1,1){5}} 
	\put(120,20){\vector(1,-2){2.5}} 
	\put(117,26){$\chi_{22}$}
	\put(115,13){$\chi_{21}$}	
	\put(70,30){\vector(-3,-1){6}} 
	\put(70,30){\vector(-1,3){2}} 
	\put(71,35){$\chi_{41}$}
	\put(63,24){$\chi_{42}$}	
	\put(100,40){\vector(-1,3){2}} 
	\put(100,40){\vector(1,1){5}} 
	\put(104,40){$\chi_{31}$}
	\put(91,42){$\chi_{32}$}
	\put(90,21){\circle*{2}}
	\put(94,21){\circle*{2}}
	\put(80,0){\circle{2}}
	\put(120,20){\circle{2}}
	\put(70,30){\circle{2}}
	\put(100,40){\circle{2}}
		
	\put(45,15){\vector(1,0){20}} 
	\put(52,17){$F_E$}
	
	\put(137,30){\vector(1,0){5}}
	\put(145,29){\text{stress}}
	\put(140,25){\circle*{2}}
	\put(145,24){\text{displacement}}
	\put(140,20){\circle{2}}
	\put(145,19){\text{rotation}}
	\end{picture}
}
	\caption{Degrees of freedom of $\X_h\times V_h\times \W^1_h$.}
	\label{elements}	
\end{figure}

The MFE method for 
\eqref{weak-1}--\eqref{weak-3} is: find $(\s_h, u_h, \g_h) \in \X_h \times
V_h \times \W_h^k$ such that
\begin{align}
	(A \s_h,\t) + (u_h,\dvr{\t}) + (\g_h, \t) &= \gnp[g]{\t n}_{\Gd}, &&\t \in \X_h,  \label{h-weak-BDM-1}\\
	(\dvr \s_h,v) &= (f,v), && v \in V_h, \label{h-weak-BDM-2}\\
	(\s_h,\xi) &= 0, && \xi \in \W_h^k. \label{h-weak-BDM-3}
\end{align}
It is shown in \cite{arnold2015mixed} that the method
\eqref{h-weak-BDM-1}--\eqref{h-weak-BDM-3} in the case $k=0$ has a
unique solution and it is first order accurate for all variables in
their corresponding norms. The framework from \cite{arnold2015mixed}
can be used to analyze the case $k=1$. A drawback of the method is
that the resulting algebraic problem is a coupled
stress-displacement-rotation system of a saddle point type. In
this paper we develop two methods that utilize 
a quadrature rule and can be reduced to cell-centered systems
for displacement-rotation and displacement only, respectively.

\subsection{A quadrature rule}

Let $\varphi$ and $\psi$ be element-wise continuous functions on $\Omega$. We
denote by $(\varphi,\psi)_Q$ the application of the element-wise tensor
product trapezoidal quadrature rule for computing $(\varphi,\psi)$.
The integration on any element $E$ is performed by mapping to the
reference element $\Eh$. For $\tau, \, \chi \in \X_h$, we have
\begin{align*}
    \int_{E} A\tau : \chi \, d\x
= \int_{\Eh} \Ah\, \frac{1}{J_E} \th \DF_E^T : \frac{1}{J_E}\ch \DF_E^T\, J_E\, d\hat\x 
= \int_{\Eh} \Ah \th \frac{1}{J_E} \DF^T_E : \ch \DF^T_E\, d\hat\x.
\end{align*}
The quadrature rule on an element $E$ is then defined as 
\begin{equation}
    (A\tau,\chi)_{Q,E} \equiv 
(\Ah \th \frac{1}{J_E} \DF^T_E, \ch \DF^T_E)_{\hat Q,\hat E}
\equiv \frac{|\Eh|}{4} \sum_{i=1}^{4} 
\Ah(\rh_i) \th(\rh_i) \frac{1}{J_E(\rh_i)} \DF^T_E(\rh_i) 
: \ch(\rh_i) \DF^T_E(\rh_i). \label{quad-rule-def}
\end{equation}
The global quadrature rule is defined as $(A\tau,\chi)_Q \equiv
\sum_{E \in \Tc_h} (A\tau, \chi)_{Q,E}$.  We note that the quadrature rule can be 
defined directly on a physical element $E$:
\begin{align}
(A\t,\chi)_{Q,E} =\frac{1}{2}\sum_{i=1}^4|T_i|A(\r_i)\t(\r_i):\chi(\r_i), 
\label{quad-rule-physical}
\end{align}
where $|T_i|$ is the area of triangle formed by the two edges sharing $\r_i$.

Recall that the stress
degrees of freedom are the two normal components per edge evaluated at
the vertices, see Figure~\ref{elements}.  For an element vertex
$\r_i$, the tensor $\chi(\r_i)$ is uniquely determined by its normal
components to the two edges that share that vertex. Since the basis
functions associated with a vertex are zero at all other vertices, the
quadrature rule \eqref{quad-rule-def} decouples the degrees of freedom
associated with a vertex from the rest of the degrees of freedom,
which allows for local stress elimination.

We also employ the trapezoidal quadrature rule for the stress-rotation
bilinear forms in the case of bilinear
rotations. For $\t \in \X_h,\, \xi \in \W^1_h$, we have
\begin{equation}
(\t, \xi)_{Q,E} \equiv 
\left(\frac{1}{J_E}\th\DF_E^T,\hat{\xi}J_E\right)_{\hat{Q},\hat{E}} 
\equiv \frac{|\hat{E}|}{4}\sum_{i=1}^{4}\th(\rh_i)\DF_E(\rh_i)^T
:\hat{\xi}(\rh_i). \label{def}
\end{equation}

The next lemma shows that the quadrature rule \eqref{quad-rule-def} produces a
coercive bilinear form.
\begin{lemma}\label{coercivity-lemma}
The bilinear form $\inp[A\tau]{\chi}_Q$ is an inner product on $\X_h$
and $\inp[A\tau]{\tau}_Q^{1/2}$ is a norm in $\X_h$ equivalent to
$\|\cdot \|$, i.e., there exist constants $0 < \alpha_0 \le \alpha_1$
independent of $h$ such that
\begin{equation}\label{coercivity}
\alpha_0 \|\tau\|^2 \le \inp[A\tau]{\tau}_Q \le \alpha_1 \|\tau\|^2 \quad 
\forall \tau\in\X_h.
\end{equation}
Furthermore, $(\xi,\xi)^{1/2}_Q$ is a norm in $\W^1_h$ equivalent to
$\|\cdot\|$, and $\forall \, \tau \in \X_h$, $\xi \in \W^1_h$,
$(\tau,\xi)_Q \le C \|\tau\|\|\xi\|$.
\end{lemma}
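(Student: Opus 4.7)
The plan is to localize to a single element $E$ and pull back to the reference element $\Eh$. Starting from the physical-element formula \eqref{quad-rule-physical}, the uniform positive definiteness and boundedness of $A$ immediately reduce the first part to showing the $h$-independent norm equivalence
$$
\sum_{i=1}^{4} |T_i|\,|\t(\r_i)|^2 \;\sim\; \|\t\|_E^2 \quad \forall\, \t \in \X_h|_E.
$$
Symmetry of $(A\cdot,\cdot)_Q$ is inherited from that of $A$; together with strict positivity this will give that $(A\cdot,\cdot)_Q$ is an inner product on $\X_h$.

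To prove the above equivalence, I would apply the Piola transform \eqref{maps} and the scalings \eqref{scaling-of-mapping}. They give the pointwise estimate $|\t(\r_i)| \sim h^{-1}|\th(\rh_i)|$, the integrated estimate $\|\t\|_E^2 \sim \|\th\|_{\Eh}^2$, and $|T_i| \sim h^2$, so the problem reduces to the $h$-free statement
$$
\sum_{i=1}^{4} |\th(\rh_i)|^2 \;\sim\; \|\th\|_{\Eh}^2 \quad \forall\, \th \in \hat{\X}(\Eh).
$$
Since both sides are quadratic forms on the finite-dimensional space $\hat{\X}(\Eh)$, it suffices to verify positivity. Here is the only structural ingredient: each vertex value $\th(\rh_i)$ is a $2\times 2$ matrix whose normal components to the two adjacent edges coincide with the nodal $\BDM_1$ degrees of freedom at $\rh_i$; the sixteen scalars collected over the four vertices thus recover the full set of sixteen $\BDM_1$ degrees of freedom that define $\hat{\X}(\Eh)$. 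Hence $\th(\rh_i)=0$ for $i=1,\dots,4$ forces $\th \equiv 0$, and finite-dimensional norm equivalence closes the argument.

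The analogous statement on $\W_h^1$ is simpler because the rotation transforms by composition without Piola. One has $\|\xi\|_E^2 \sim h^2\|\xih\|_{\Eh}^2$, $(\xi,\xi)_{Q,E} \sim h^2 \sum_i |\xih(\rh_i)|^2$, and the finite-dimensional equivalence $\sum_i |\xih(\rh_i)|^2 \sim \|\xih\|_{\Eh}^2$ on $\hat{\W}^1(\Eh)$ is immediate, since a $\Qc_1$ function is determined by its four nodal values.

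For the cross bound, I would apply Cauchy--Schwarz inside the reference quadrature sum \eqref{def} and use $|\DF_E^T(\rh_i)| \lesssim h$ to extract one factor of $h$:
$$
|(\t,\xi)_{Q,E}| \;\leq\; C h \sum_{i=1}^{4} |\th(\rh_i)|\,|\xih(\rh_i)| \;\leq\; C h\, \|\th\|_{\Eh}\, \|\xih\|_{\Eh} \;\leq\; C \|\t\|_E \|\xi\|_E,
$$
after invoking the two reference-element norm equivalences together with the $L^2$ scalings $\|\th\|_{\Eh} \sim \|\t\|_E$ and $h\|\xih\|_{\Eh} \sim \|\xi\|_E$. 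Summing over $E$ and applying the discrete Cauchy--Schwarz inequality yields the global bound. The main obstacle is disciplined $h$-bookkeeping through the Piola transform; once the degrees-of-freedom identification on $\hat{\X}(\Eh)$ is in place, everything else follows from shape-regularity and the finite-dimensional norm equivalences on the fixed space $\hat{\X}(\Eh)$ and $\hat{\W}^1(\Eh)$.
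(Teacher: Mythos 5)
Your argument is correct and is essentially the approach the paper intends: the paper's proof is a one-line deferral to Lemma~2.2 of \cite{msmfe_simp} ``using \eqref{quad-rule-physical}'', and that argument is exactly the combination you spell out — the physical-element quadrature formula, uniform positive definiteness of $A$, the fact that the vertex values of a tensor in $\hat{\X}(\Eh)$ are in bijection with the sixteen $\BDM_1$ degrees of freedom (so the vertex-value sum is a norm on the fixed reference space), and Piola/shape-regularity scalings to transfer the reference-element equivalence to $E$ with $h$-independent constants. The $\W^1_h$ statement and the cross bound via Cauchy--Schwarz in the quadrature sum with one factor of $h$ from $\DF_E^T$ are likewise the standard completion of that argument.
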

\begin{proof}
The proof follows the argument of Lemma~2.2 in \cite{msmfe_simp}, 
using \eqref{quad-rule-physical}.
\end{proof}

\section{The multipoint stress mixed finite element method with constant 
rotations (MSMFE-0)}

Let $\Pc_0$ be the $L^2$-orthogonal
projection onto $\X^{\RT}_h\,n$, the space of piecewise constant vector-valued 
functions on the trace of $\Tc_h$ on $\partial \O$:
\begin{align}
\forall \phi \in L^2(\partial\O), \quad \langle \phi-\Pc_0\phi, \t \,n\rangle_{\partial \O} =0,\quad \forall \t\in \X^{\RT}_h. \label{rhs-proj}
\end{align}
Our first method, referred to as MSMFE-0, is: 
find $\sigma_h \in \X_h,\, u_h \in V_h$, and $\g_h \in \W_h^0$ such that
\begin{align}
(A \s_h,\t)_Q + (u_h,\dvr{\t}) + (\g_h, \t) &= \gnp[\Pc_0 g]{\t\,n}_{\Gd}, 
& \t &\in \X_h, \label{h-weak-P0-1}\\
(\dvr \sigma_h,v) &= (f,v), & v &\in V_h, \label{h-weak-P0-2}\\
(\sigma_h,\xi) &= 0, &\xi &\in \W_h^0. \label{h-weak-P0-3}
\end{align}
The Dirichlet data is incorporated into the scheme as $\Pc_0 g$, which
is necessary for the optimal approximation of the boundary condition
term. 

\begin{theorem}
The method \eqref{h-weak-P0-1}--\eqref{h-weak-P0-3} has a unique solution.
\end{theorem}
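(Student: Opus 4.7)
The system \eqref{h-weak-P0-1}--\eqref{h-weak-P0-3} is square and finite-dimensional, so it suffices to show uniqueness of the solution. The plan is to verify uniqueness by a standard saddle-point argument, using the coercivity result of Lemma~\ref{coercivity-lemma} together with the inf-sup stability of the underlying BDM-based mixed elasticity triple from \cite{arnold2015mixed}, observing that the quadrature rule $(\cdot,\cdot)_Q$ is applied only to the stress-stress form and does not alter the off-diagonal forms $(v,\dvr \t)$ and $(\xi,\t)$.

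I would assume $f = 0$ and $g = 0$ and let $(\s_h, u_h, \g_h) \in \X_h \times V_h \times \W_h^0$ solve the homogeneous system. Testing with $\t = \s_h$, $v = u_h$, and $\xi = \g_h$, the cross terms cancel: the contribution $(u_h, \dvr \s_h)$ from \eqref{h-weak-P0-1} cancels against $(\dvr \s_h, u_h)$ from \eqref{h-weak-P0-2}, and $(\g_h, \s_h)$ cancels against $(\s_h, \g_h)$ from \eqref{h-weak-P0-3}. This leaves $(A\s_h, \s_h)_Q = 0$, and by the norm equivalence in Lemma~\ref{coercivity-lemma} I conclude that $\s_h = 0$.

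With $\s_h = 0$, equation \eqref{h-weak-P0-1} reduces to the constraint
\begin{equation*}
(u_h, \dvr \t) + (\g_h, \t) = 0 \qquad \forall \t \in \X_h.
\end{equation*}
At this point I invoke the inf-sup condition for the BDM$_1 \times \Qc_0 \times \Qc_0$ triple on quadrilateral grids, which is established in \cite{arnold2015mixed}: there exists a constant $\beta > 0$, independent of $h$, such that for every $(v_h, \xi_h) \in V_h \times \W_h^0$ one can find $\t \in \X_h$ with $(v_h, \dvr \t) + (\xi_h, \t) \ge \beta (\|v_h\| + \|\xi_h\|)\|\t\|_{\dvr}$. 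Applied to $(u_h, \g_h)$, this forces $u_h = 0$ and $\g_h = 0$.

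The main (and essentially only) subtlety is ensuring that the quadrature modification does not spoil the application of the inf-sup condition of \cite{arnold2015mixed}. This is handled cleanly because the quadrature only enters the stress-stress block, so the off-diagonal operator governing the inf-sup is exactly the one treated in \cite{arnold2015mixed}; the role of the quadrature is fully absorbed into the coercivity step, which is already taken care of by Lemma~\ref{coercivity-lemma}. Hence the homogeneous problem has only the trivial solution, and uniqueness (and thus existence) of a solution to \eqref{h-weak-P0-1}--\eqref{h-weak-P0-3} follows.
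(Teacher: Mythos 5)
Your proof is correct and rests on exactly the same two ingredients as the paper's: the coercivity of $(A\cdot,\cdot)_Q$ from Lemma~\ref{coercivity-lemma} and the inf-sup condition for the $\BDM_1\times\Qc_0\times\Qc_0$ triple from \cite{arnold2015mixed}, which is unaffected by the quadrature since only the stress--stress block is modified. The only difference is presentational: the paper cites the abstract Babu\v{s}ka--Brezzi conditions \ref{S1-P0}--\ref{S2-P0}, whereas you unfold the standard homogeneous-data kernel argument explicitly, which is equivalent.
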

\begin{proof}
Using classical stability theory of mixed finite element methods,
the required Babu\v{s}ka-Brezzi stability conditions \cite{brezzi1991mixed} are:
\begin{enumerate}[label={\bf(S\arabic*)}]
	\item \label{S1-P0} There exists $c_1 > 0$ such that 
	\begin{align}
	c_1\| \t \|_{\dvrg} \le \inp[A\t]{\t}^{1/2}_{Q}
	\end{align} 
	for $\t \in \X_h$ satisfying $\inp[\dvr \t]{v} = 0$ and $\inp[\t]{\xi} = 0$ for all $(v,\xi) \in V_h\times \W_h^0$.
	\item \label{S2-P0} There exists $c_2 > 0$ such that 
	\begin{align}
	\inf_{0\neq(v,\xi)\in V_h\times \W_h^0 } \sup_{0\neq \t\in\X_h} \frac{\inp[\dvr\t]{v}+\inp[\t]{\xi}}{\| \t \|_{\dvrg} \left( \|v\| + \|\xi\| \right)}  \ge c_2.  \label{inf-sup-P0}
	\end{align}
\end{enumerate}
Using \eqref{prop-piola} and $\dvrg \Xh(\Eh) = \Vh(\Eh)$, 
the condition $(\dvrg \tau,v)=0,\, \forall v \in V_h$ implies that 
$\dvr \tau =0$. Then \ref{S1-P0} follows from \eqref{coercivity}.
The inf-sup condition \ref{S2-P0} has been shown in \cite{arnold2015mixed}.
\end{proof}

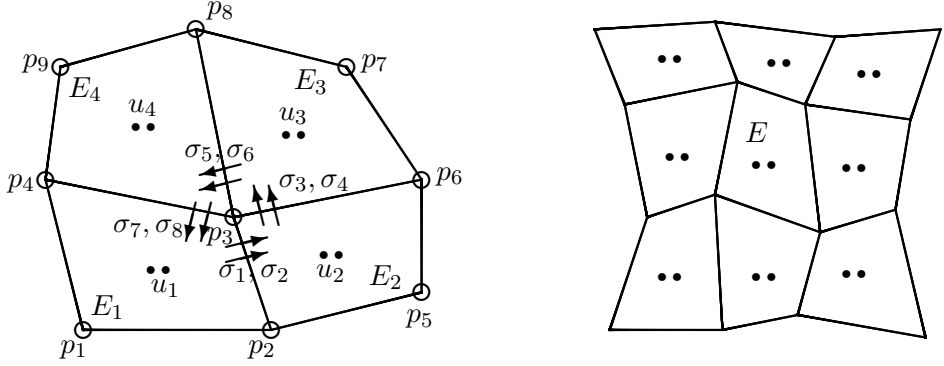
\begin{figure}		
	\setlength{\unitlength}{1.0mm}
	\begin{picture}(100,45)(-20,0)
	\thicklines
	\Line(10,5)(5,25)
	\Line(10,5)(35,5)
	\Line(5,25)(30,20)
	\Line(30,20)(35,5)  
	
	\Line(30,20)(55,25)
	\Line(55,25)(55,10)
	\Line(55,10)(35,5)  
	
	\Line(30,20)(25,45)
	\Line(25,45)(45,40)
	\Line(45,40)(55,25)
	
	\Line(5,25)(7,40)
	\Line(7,40)(25,45)
	
	\put(19,13){\circle*{1}}
	\put(21,13){\circle*{1}}
	
	\put(17,32){\circle*{1}}
	\put(19,32){\circle*{1}}
	
	\put(37,31){\circle*{1}}
	\put(39,31){\circle*{1}}
	
	\put(42,15){\circle*{1}}
	\put(44,15){\circle*{1}}
	
	\put(10,5){\circle{2}}
	\put(35,5){\circle{2}}
	\put(30,20){\circle{2}}
	\put(5,25){\circle{2}}    
	\put(7,40){\circle{2}}
	\put(25,45){\circle{2}}
	\put(45,40){\circle{2}}
	\put(55,10){\circle{2}}	
	\put(55,25){\circle{2}}
	
	\put(31,25){\vector(-4,-1){5.5}} 
	\put(31,27){\vector(-4,-1){5.5}} 
	\put(25,22){\vector(-1,-4){1.3}}
	\put(27,22){\vector(-1,-4){1.3}} 
	\put(29,16){\vector(4,1){5.5}}
	\put(29,14){\vector(4,1){5.5}} 
	\put(34,19){\vector(-1,4){1.3}}
	\put(36,19){\vector(-1,4){1.3}} 
	
	\put(11,7){$E_1$}
	\put(48,11){$E_2$}
	\put(8,36){$E_4$}
	\put(38,37){$E_3$}
	\put(19,10){$u_1$}
	\put(16,34){$u_4$}
	\put(36,33){$u_3$}
	\put(41,12){$u_2$}
	\put(7,2){$p_1$}
	\put(32,2){$p_2$}		
	\put(53,6){$p_5$}	
	\put(0,24){$p_4$}
	\put(26.5,17){$p_3$}		
	\put(57,25){$p_6$}
	\put(2,40){$p_9$}
	\put(26.5,47){$p_8$}		
	\put(47,40){$p_7$}		
	\put(28,12){$\s_1,\s_2$}
	\put(36,24){$\s_3,\s_4$}
	\put(23.5,28){$\s_5,\s_6$}
	\put(14,18){$\s_7,\s_8$}

	\Line(80,5)(85,20)
	\Line(85,20)(82,35)
	\Line(82,35)(78,45)
	
	\Line(80,5)(95,5)
	\Line(85,20)(94,23)
	\Line(82,35)(97,38)
	\Line(78,45)(94,46)
	
	\Line(95,5)(94,23)
	\Line(94,23)(97,38)
	\Line(97,38)(94,46)
	
	\Line(105,7)(108,18)
	\Line(108,18)(106,35)
	\Line(106,35)(110,44)
	
	\Line(95,5)(105,7)
	\Line(94,23)(108,18)
	\Line(97,38)(106,35)
	\Line(94,46)(110,44)
	
	\Line(122,4)(118,21)
	\Line(118,21)(120,33)
	\Line(120,33)(124,45)
	
	\Line(105,7)(122,4)
	\Line(108,18)(118,21)
	\Line(106,35)(120,33)
	\Line(110,44)(124,45)
	\put(87,12){\circle*{1}}
	\put(89,12){\circle*{1}}
	\put(99.5,12){\circle*{1}}
	\put(101.5,12){\circle*{1}}
	\put(111.5,12.5){\circle*{1}}
	\put(113.5,12.5){\circle*{1}}
	
	\put(88,28){\circle*{1}}
	\put(90,28){\circle*{1}}
	\put(99.5,27){\circle*{1}}
	\put(101.5,27){\circle*{1}}
	\put(111.5,26.5){\circle*{1}}
	\put(113.5,26.5){\circle*{1}}
	\put(98,30){$E$}
	
	\put(87,41){\circle*{1}}
	\put(89,41){\circle*{1}}
	\put(101.5,40.5){\circle*{1}}
	\put(103.5,40.5){\circle*{1}}
	\put(113.5,39){\circle*{1}}
	\put(115.5,39){\circle*{1}}
	
	\end{picture}
	\caption{Finite elements sharing a vertex (left) and displacement stencil (right)}
	\label{stencil}
\end{figure}

\subsection{Reduction to a cell-centered displacement-rotation system}

The algebraic system that arises from 
\eqref{h-weak-P0-1}--\eqref{h-weak-P0-3} is of the form
\begin{equation}\label{sp-matrix}
    \begin{pmatrix} \As & \Au^T & \Ag^T \\ - \Au & 0   & 0 \\ - \Ag & 0   & 0 \end{pmatrix} 
    \begin{pmatrix} \sigma \\ u \\ \g \end{pmatrix} = 
    \begin{pmatrix} g \\ - f \\ 0 \end{pmatrix},
\end{equation}
where $(\As)_{ij} = (A\tau_j,\tau_i)_Q$, $(\Au)_{ij} = (\dvr\tau_j,
v_i)$, and $(\Ag)_{ij} = (\tau_j,\xi_i)$.  The method can be reduced
to solving a cell-centered displacement-rotation system as
follows. Since the quadrature rule $(A\sigma_h,\tau)_Q$ localizes the
basis functions interaction around mesh vertices, the matrix $\As$ is
block-diagonal with $2k\times 2k$ blocks associated with vertices, where $k$
is the number of elements that share the vertex, see
Figure~\ref{stencil} (left) for an example with $k=4$.
Lemma~\ref{coercivity-lemma} implies that the blocks are symmetric and 
positive definite. Therefore the
stress $\sigma_h$ can be easily eliminated by solving small local
systems, resulting in the cell-centered displacement-rotation system 
\begin{equation}\label{msmfe0-system}
    \begin{pmatrix} \Au\As^{-1}\Au^T & \Au\As^{-1}\Ag^T \\ 
\Ag\As^{-1}\Au^T & \Ag\As^{-1}\Ag^T \end{pmatrix} 
    \begin{pmatrix} u \\ \g \end{pmatrix} =
    \begin{pmatrix} \tilde{f} \\ \tilde{h} \end{pmatrix}.
\end{equation}
The displacement and rotation stencils for an element $E$ include all 
elements that share a vertex with $E$, see Figure~\ref{stencil} (right)
for an example of the displacement stencil. The matrix in 
\eqref{msmfe0-system} is clearly symmetric. Furthermore, for any
$\begin{pmatrix} v^T & \xi^T \end{pmatrix} \neq 0$, 
\begin{equation}\label{spd-matrix}
\begin{pmatrix} v^T & \xi^T \end{pmatrix}
\begin{pmatrix} \Au\As^{-1}\Au^T & \Au\As^{-1}\Ag^T 
\\ \Ag\As^{-1}\Au^T & \Ag\As^{-1}\Ag^T \end{pmatrix}
\begin{pmatrix} v \\ \xi \end{pmatrix} 
= (\Au^Tv + \Ag^T\xi)^T\As^{-1}(\Au^Tv + \Ag^T\xi) > 0,
\end{equation}
due to the inf-sup condition \ref{S2-P0}, which implies that the matrix
is positive definite.

\begin{remark}
The MSMFE-0 method is more efficient than the original MFE method,
since it involves a smaller system, which is symmetric and positive
definite. We note that further reduction in the system is not
possible. In the next section we develop a method with continuous
bilinear rotations and a trapezoidal quadrature rule applied to the
stress-rotation bilinear forms. This allows for further local
elimination of the rotation, resulting in a cell-centered system for
the displacement only.
\end{remark}

\section{The multipoint stress mixed finite element method with bilinear 
rotations (MSMFE-1)}

In the second method, referred to as MSMFE-1, we take $k=1$ in \eqref{ref-spaces}
and apply the quadrature rule to both the stress
bilinear form and the stress-rotation bilinear forms. The method is: find
$\sigma_h \in \X_h,\, u_h \in V_h$ and $\g_h \in \W_h^1$ such that
\begin{align}
(A \s_h,\t)_Q + (u_h,\dvr{\t}) + (\g_h,\t)_Q &= \gnp[\Pc_0 g]{\t \, n}_{\Gd}, &\t &\in \X_h, \label{h-weak-P1-1}\\
(\dvr \sigma_h,v) &= (f,v), &v &\in V_h,\label{h-weak-P1-2}\\
(\sigma_h,\xi)_Q &= 0, &\xi &\in \W_h^1. \label{h-weak-P1-3}
\end{align}
The stability conditions for the MSMFE-1 method are as follows:
\begin{enumerate}[label={\bf(S\arabic*)}]
	\setcounter{enumi}{2}
	\item \label{S3-P1}There exists $c_3 > 0$ such that 
	$$ c_3\| \t \|_{\dvrg}^2 \le \inp[A\t]{\t}_{Q}, $$
	for $\t \in \X_h$ satisfying $\inp[\dvr \t]{v} = 0$ and $\inp[\t]{\xi}_{Q} = 0$ for all $(v,\xi) \in V_h\times \W_h^1$.
	\item \label{S4-P1}There exists $c_4 > 0$ such that 
	\begin{align}
	\inf_{0\neq(v,\xi)\in V_h\times \W_h^1 } \sup_{0\neq \t\in\X_h} \frac{\inp[\dvr\t]{v}+\inp[\t]{\xi}_Q}{\| \t \|_{\dvrg} \left( \|v\| + \|\xi\| \right)}  \ge c_4. \label{inf-sup-P1}
	\end{align} 
\end{enumerate}

\subsection{Well-posedness of the MSMFE-1 method}
The stability condition \ref{S3-P1} holds, since the spaces $\X_h$ and
$V_h$ are as in the MSMFE-0 method. However, 
\ref{S4-P1} is different, due to the quadrature rule in $\inp[\tau]{\xi}_Q$, 
and it needs to be verified. The next theorem, proved in \cite{msmfe_simp},
provides sufficient conditions for a triple $\X_h\times V_h \times
\W^1_h$ to satisfy \ref{S4-P1}, where we adopt the notation 
$b(q,w)= (\dvr q,w)$ and $b(q,w)_Q = (\dvr q,w)_Q$.
\begin{theorem}\label{inf-sup-P1-proof}
Suppose that $S_h \subset H(\dvr;\Omega)$ and $U_h \subset L^2(\Omega)$ satisfy
\begin{align}
\inf\limits_{0\neq r\in U_h} \sup\limits_{0\neq z\in S_h} \frac{b(z,r)}{\| z \|_{\dvr} \|r\| }  \ge c_5, \label{darcy-pair}
\end{align}
that $Q_h \subset H^1(\Omega,\R^2)$ and $W_h \subset L^2(\Omega)$ are such 
that $(w,w)_Q^{1/2}$ is a norm in $W_h$ equivalent to $\|w\|$ and
\begin{align}
\inf\limits_{0\neq w\in W^1_h} \sup\limits_{0\neq q\in Q_h} 
\frac{b(q,w)_Q}{\| q \|_{1} \|w\| }  \ge c_6. \label{stokes-pair} 
\end{align}
and that
	\begin{align}
	\curl Q_h \subset S_h \times S_h. \label{curl-condition}
	\end{align}
Then, $\X_h = S_h \times S_h \subset H(\dvrg;\Omega,\M)$, $V_h = U_h \times U_h \subset L^2(\Omega, \mathbb{R}^2)$ and 
$\W^1_h =\left\{\xi: \xi =\begin{pmatrix}
	0 & w \\
	-w & 0
	\end{pmatrix},\, w \in W_h \right \} \subset L^2(\Omega,\N)$ satisfy \ref{S4-P1}.
\end{theorem}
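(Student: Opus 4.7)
The plan is to verify \ref{S4-P1} by a Fortin-type construction. Given $0 \neq (v,\xi) \in V_h \times \W_h^1$, write $\xi = \begin{pmatrix} 0 & w \\ -w & 0 \end{pmatrix}$ with $w \in W_h$ and build a test function of the form $\t = \t_2 + \delta\,\t_1 \in \X_h$, where $\t_2$ comes from the Darcy hypothesis \eqref{darcy-pair} and carries the divergence pairing $(\dvr \t, v)$, $\t_1$ comes from the Stokes hypothesis \eqref{stokes-pair} composed with $\curl$ and carries the rotation pairing $(\t,\xi)_Q$, and $\delta > 0$ is fixed at the end to absorb a cross term.

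The construction of $\t_1$ is the heart of the argument. Applying \eqref{stokes-pair} and scaling the resulting $q \in Q_h$ so that $\|q\|_1 = \|w\|$ produces $(\dvr q, w)_Q \geq c_6 \|w\|^2$. Setting $\t_1 := -\curl q$, the compatibility \eqref{curl-condition} places $\t_1$ in $\X_h$. Applying $\dvr$ row-wise gives $\dvr \t_1 = 0$, so $\t_1$ does not perturb the divergence pairing. Moreover, at each vertex $\r_i$ the elementary identity $\curl q(\r_i) : \xi(\r_i) = -w(\r_i)\,\dvr q(\r_i)$ holds by direct computation; since the trapezoidal rule \eqref{def} and the trapezoidal Stokes quadrature both sum vertex values with the same geometric weights, this pointwise identity elevates to $(\t_1,\xi)_Q = (\dvr q, w)_Q \geq c_6 \|w\|^2$. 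Finally, $\|\t_1\|_{\dvr} = \|\curl q\| \leq \|q\|_1 = \|w\|$. For $\t_2$, the Darcy inf-sup \eqref{darcy-pair} together with $\dvr \X_h \subset V_h$ supplies a bounded right inverse of $\dvr$ row by row, yielding $\t_2 \in \X_h$ with $\dvr \t_2 = v$ and $\|\t_2\|_{\dvr} \leq C\|v\|$.

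Combining, $\t = \t_2 + \delta\,\t_1$ satisfies
\begin{equation*}
(\dvr \t, v) + (\t, \xi)_Q \;=\; \|v\|^2 \;+\; (\t_2, \xi)_Q \;+\; \delta\,(\dvr q, w)_Q.
\end{equation*}
Lemma \ref{coercivity-lemma} and Young's inequality give $(\t_2,\xi)_Q \geq -C\|\t_2\|\|\xi\| \geq -\tfrac12\|v\|^2 - C'\|\xi\|^2$, and the block structure of $\xi$ (together with the hypothesis that $(w,w)_Q^{1/2}$ is equivalent to $\|w\|$) gives $\|w\|\sim\|\xi\|$. Choosing $\delta$ large enough that $\delta c_6$ dominates $C'$ after this equivalence, one obtains $(\dvr \t, v)+(\t,\xi)_Q \geq c(\|v\|^2 + \|\xi\|^2) \geq c'(\|v\|+\|\xi\|)^2$, while the triangle inequality gives $\|\t\|_{\dvr} \leq C(\|v\|+\|\xi\|)$. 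This yields \ref{S4-P1}. The step I expect to be the main obstacle is the identity $(\t_1,\xi)_Q = (\dvr q, w)_Q$: although algebraically transparent at vertices, its rigorous justification requires matching two trapezoidal rules defined on different pairs of spaces and unwinding the Piola transformation in \eqref{def}. Every other step is a standard saddle-point manipulation.
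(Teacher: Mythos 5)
Your construction is correct and is essentially the argument the paper relies on: the paper does not reproduce a proof here but defers to \cite{msmfe_simp}, where the same Fortin-type splitting $\t = \t_2 + \delta\,\t_1$ with $\t_1 = -\curl q$ is used, exploiting exactly the pointwise identity $\curl q : \xi = -w\,\dvr q$ and the fact that both trapezoidal rules weight the physical vertex values identically (via $J_E(\rh_i)$ after unwinding the Piola map), so that $(\t_1,\xi)_Q = (\dvr q, w)_Q$. The step you flag as the main obstacle does go through for precisely this reason, and the remaining manipulations (divergence-free-ness of $\curl$, the Darcy lift, Young's inequality on the cross term, and $\|\xi\|^2 = 2\|w\|^2$) are all sound.
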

\begin{remark}
Condition \eqref{darcy-pair} states that $S_h\times U_h$ is a stable Darcy pair.
Condition \eqref{stokes-pair} states that $Q_h\times W_h$ is a stable 
Stokes pair with quadrature.
\end{remark}

\begin{lemma}\label{aux-lemma}
Conditions \eqref{darcy-pair} and \eqref{curl-condition} hold for 
$\X_h\times V_h \times \W^1_h$ defined  in \eqref{ref-spaces} and 
\eqref{spaces}.
\end{lemma}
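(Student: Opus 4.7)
The plan is to verify the two conditions separately; both reduce to reference-element computations together with a standard transfer step.

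For condition \eqref{darcy-pair}, once we identify $S_h = \BDM_1$ with a row of $\X_h$ and $U_h = \Qc_0$ with a component of $V_h$, the inf-sup stability on quadrilaterals is the classical lowest-order Darcy result of Brezzi--Douglas--Marini, established in \cite{brezzi1985two,brezzi1991mixed}. The surjectivity of $\dvr$ onto $\Qc_0$ follows from the local property $\dvr \Xh(\Eh) = \Vh(\Eh)$ stated after \eqref{ref-spaces} together with the Piola commuting relation in \eqref{prop-piola}, so no new argument is required.

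For the curl inclusion \eqref{curl-condition}, I would first verify it on the reference square. Taking an arbitrary $\hat q = (\hat q_1, \hat q_2)^T$ in the reference Stokes velocity space (namely $\SS_2(\Eh) = \Pc_2(\Eh) + \operatorname{span}\{\xh^2 \yh,\, \xh \yh^2\}$ in each component), I compute $\curl \hat q_i = (\partial_{\yh} \hat q_i,\, -\partial_{\xh} \hat q_i)$ componentwise. A direct comparison with the explicit form of the $\BDM_1$ row displayed in \eqref{ref-spaces} shows that the coefficients match: the enrichment monomials $\xh^2 \yh$ and $\xh \yh^2$ in $\hat q_i$ produce $\curl(\xh^2 \yh) = (\xh^2,\, -2\xh\yh)$ and $\curl(\xh \yh^2) = (2\xh\yh,\, -\yh^2)$, which are exactly the curl-enrichment terms multiplying the parameters $r_i$ and $s_i$ in the $i$th row of $\Xh(\Eh)$; the remaining quadratic and linear parts of $\hat q_i$ contribute only to the $\Pc_1(\Eh)^2$ piece. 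Hence $\curl$ maps the reference Stokes space row-wise into $\Xh(\Eh)$.

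Next, I would transfer this inclusion to each physical element via \eqref{prop-piola}, which states that the Piola transformation commutes with $\curl$, giving $\curl q|_E \in \X_h|_E$ for every $E \in \Tc_h$. Global inclusion in $\X_h \subset H(\dvrg;\Omega,\M)$ then requires continuity of the normal component of $\curl q$ across interelement edges. Since $q \in Q_h \subset H^1(\Omega,\R^2)$, the normal trace of each row $(\curl q_i)\cdot n$ equals the tangential derivative of $q_i$ along the edge, which is continuous as the trace of an $H^1$ function; this delivers $\curl q \in \X_h$.

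There is no substantial obstacle here; the argument is essentially bookkeeping. The only step that requires attention is the reference-element matching, which hinges on the observation that the curls of the serendipity enrichment monomials $\xh^2 \yh$ and $\xh \yh^2$ generate exactly the paired $(r_i, s_i)$-structure already present in the $\BDM_1$ row in \eqref{ref-spaces}. This tight correspondence is presumably the reason the space $\SS_2$ is an appropriate choice of Stokes velocity space.
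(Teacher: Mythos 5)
Your argument follows the paper's proof almost exactly: the Darcy condition is dispatched by citing the classical stability of $\BDM_1\times\Qc_0$, and the curl inclusion is verified on the reference square by matching the curls of the serendipity enrichment monomials $\xh^2\yh$, $\xh\yh^2$ with the $(r_i,s_i)$-terms in \eqref{ref-spaces}, then transferred to physical elements via the Piola--curl commutation in \eqref{prop-piola}. Your reference-element computation is correct and is the same "one can verify" step the paper leaves implicit.

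The one piece you omit is the essential boundary condition on $\Gamma_N$. The space $S_h$ (a row of $\X_h$) carries the constraint $z\cdot n=0$ on $\Gamma_N$, so \eqref{curl-condition} requires $(\curl q)\,n=0$ there; for this the Stokes velocity space $Q_h$ must itself be defined with $q=0$ on $\Gamma_N$, as in \eqref{stokes-vel-space}, and you never impose this. The fix is already contained in your own observation: since the normal trace of $\curl q_i$ on an edge is the tangential derivative of $q_i$, the vanishing of $q$ on $\Gamma_N$ forces $(\curl q)\,n=0$ there --- this is precisely the paper's closing step (citing Lemma 4.2 of \cite{msmfe_simp}). You apply that observation only to interelement normal continuity (which is in fact automatic, since $\curl q$ is divergence free for $q\in H^1$), but not to the boundary constraint, where it is actually needed.
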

\begin{proof}
According to the definition \eqref{ref-spaces}, we take
\begin{equation*}
S_h = \{ z\in H(\dvrg;\Omega): z|_E \overset{\Pc}\leftrightarrow \hat z \in \BDM_1(\Eh), \,
z\cdot n = 0 \mbox{ on } \Gamma_N\}, 
\end{equation*}
\begin{equation*}
U_h = \{r \in L^2(\Omega): r|_E \leftrightarrow \hat r \in \Qc_0(\Eh)\}, \quad
W_h = \{w \in H^1(\Omega): w|_E \leftrightarrow \hat w \in \Qc_1(\Eh)\}.
\end{equation*}
We note that
$W_h$ satisfies the norm equivalence $(w,w)_Q^{1/2} \sim \|w\|$,
see Lemma~\ref{coercivity-lemma}.
The boundary condition in $S_h$ is needed to guarantee the essential
boundary condition in $\X_h$ on $\Gamma_N$. Since $\BDM_1 \times \Qc_0$ is a stable
Darcy pair \cite{brezzi1991mixed}, \eqref{darcy-pair} holds.
Next, following the construction in \cite{arnold2015mixed},
we take $\mathcal{SS}_2(\Eh)$ to be the reduced bi-quadratics (serendipity) space 
\cite{ciarlet2002finite},
\begin{align*}
\mathcal{SS}_2(\Eh) = \Pc_2(\Eh) +\mbox{span}\{\xh^2\yh,\xh\yh^2\},
\end{align*}
and define the space $Q_h$ as
\begin{align}
Q_h =\{q\in H^1(\O,\R^2): q_i|_{E} \leftrightarrow \hat{q}_i \in 
\mathcal{SS}_2(\hat{E}), \,i=1,2,\,\forall E\in \mathcal{T}_h, \, 
q = 0 \mbox{ on } \Gamma_N\}. \label{stokes-vel-space}
\end{align}
One can verify that $\curl \mathcal{SS}_2(\Eh) \subset \mathcal{BDM}_1(\Eh)$.
Due to \eqref{prop-piola}, $\curl Q_h \subset S_h\times S_h$, not considering 
the boundary condition on $\Gamma_N$. Finally, since for $q \in Q_h$ we have
$q = 0$ on $\Gamma_N$, then $(\curl q) \, n = 0$ on $\Gamma_N$, see 
\cite[Lemma 4.2]{msmfe_simp}. 
\end{proof}
To prove \ref{S4-P1}, it remains to show that \eqref{stokes-pair} holds.
It is shown in \cite{stenberg1984analysis} that $\mathcal{SS}_2-\Qc_1$ is 
a stable Stokes pair on rectangular grids. We need to verify that it is a stable 
Stokes pair with quadrature on quadrilaterals, which we do next.

\subsubsection{The inf-sup condition for the Stokes problem}\label{inf-sup-stokes-section}

We prove \eqref{stokes-pair} using a modification of the macroelement
technique presented in \cite{stenberg1984analysis}. The idea is to
establish first a local inf-sup condition and then combine locally
constructed velocity vectors to prove the global inf-sup condition.
We recall that in \cite{stenberg1984analysis}, it was sufficient to
consider velocity functions that vanish on the macroelement boundary
in order to control the pressures locally. However, due to the
quadrature rule, this is not true in our case. We show how a similar
result can be obtained without restricting the velocity basis
functions on the macroelement boundary, under a smoothness assumption
on the grid $\Tc_h$.

We
consider the span of all edge degrees of freedom of $Q_h(E)$ and
denote it by $Q^e_h(E)$. Let
\begin{align*}
N_E = \{w \in W_h(E) :\, b(q,w)_{Q,E} = 0,\, \forall q\in Q^e_h(E) \}.
\end{align*}
We make the following assumptions on the mesh.
 \begin{enumerate} [label={\bf(M\arabic*)},align=left]
    \item \label{M1} Each element $E$ has at most one edge on $\Gamma_N$.

\item \label{M2} The mesh size $h$ is sufficiently small and there
  exists a constant $C$ such that for every pair of neighboring
  elements $E$ and $\tilde E$ such that $E$ or $\tilde E$ is a
  non-parallelogram, and every pair of edges $e \subset \partial E \setminus \partial \tilde E$,
  $\tilde e \subset \partial \tilde E \setminus \partial E$ that share a vertex,
	\begin{align*}
	\|\r_{e}-\r_{\tilde e}\|_{\mathbb{R}^2} \leq Ch^2,
	\end{align*}
	where $\r_e$ and $\r_{\tilde e}$ are the vectors corresponding to $e$ 
and $\tilde e$, respectively, and $\|\cdot\|_{\mathbb{R}^2}$ is the Euclidean vector norm.
\end{enumerate}

\begin{remark}
Condition \ref{M1} is needed to establish a local inf-sup condition.
Condition \ref{M2} is needed to combine the local results and prove
the global inf-sup condition \eqref{stokes-pair}. We note that it is
required only for non-parallelogram elements. It is a mesh smoothness
condition. For example, it is satisfied if the mesh is generated by a
$C^2$ map of a uniform reference grid. The condition on the mesh size
$h$ is given in the proof of Lemma~\ref{macro-lemma-2}.
\end{remark}

\begin{figure}	
	\setlength{\unitlength}{0.8mm}
	\scalebox{.8}{\begin{picture}(50,65)(-65,-20)
	\thicklines
	\Line(-18,0)(26,-1)
	\Line(-18,0)(-15,48)
	\Line(26,-1)(29,50)
	\Line(-15,48)(29,50)
	
	\put(-18,0){\circle{2}}
	\put(26,-1){\circle{2}}
	\put(-15,48){\circle{2}}
	\put(29,50){\circle{2}}	
	
	\put(-16.5,2){\text{{$\r_1$}}}
	\put(21.5,1.5){\text{$\r_2$}}
	\put(23,46){\text{$\r_3$}}
	\put(-13.5,45){\text{$\r_4$}}
	
	\Line(-2,-2.5)(2,1.5)
	\Line(2,-2.5)(-2,1.5)
	
	\Line(25.5,20)(29.5,24)
	\Line(29.5,20)(25.5,24)
	
	\Line(4,47)(8,51)
	\Line(8,47)(4,51)
	
	\Line(-18.5,21)(-14.5,25)
	\Line(-14.5,21)(-18.5,25)
	
	\put(-1,-4.5){\text{$q_1$}}
	\put(30,22){\text{$q_2$}}
	\put(4.5,52.5){\text{$q_3$}}
	\put(-23,23){\text{$q_4$}}
	
	\Line(52,0)(101,-1)
	\Line(52,0)(55,50)
	\Line(101,-1)(99,48)
	\Line(55,50)(99,48)
	
	\put(52,0){\circle{2}}
	\put(101,-1){\circle{2}}
	\put(55,50){\circle{2}}
	\put(99,48){\circle{2}}	
	
	\put(53.5,2){\text{{$\r_1$}}}
	\put(95.5,1.5){\text{$\r_2$}}
	\put(93.5,45){\text{$\r_3$}}
	\put(56,46){\text{$\r_4$}}
	
	
	\Line(98,20.5)(102,24.5)
	\Line(102,20.5)(98,24.5)
	
	\Line(75,47)(79,51)
	\Line(79,47)(75,51)
	
	\Line(51.5,22)(55.5,26)
	\Line(55.5,22)(51.5,26)
	
	\put(102,22){\text{$q_2$}}
	\put(75.5,52.5){\text{$q_3$}}
	\put(47,23){\text{$q_4$}}
	
	\put(73,-10){\text{$\Gamma_N$}}
	
	\Line(123,8)(143,9)
	\Line(121,28)(142,31)
	\Line(121,28)(123,8)
	\Line(143,9)(142,31)
	\Line(143,9)(164,8)
	\Line(142,31)(163,33)
	\Line(143,9)(142,31)
	\Line(164,8)(163,33)
	\Line(163,33)(184,32)
	\Line(164,8)(186,6)
	\Line(163,33)(164,8)
	\Line(184,32)(186,6)
	\Line(142,31)(140,53)
	\Line(163,33)(164,54)
	\Line(140,53)(164,54)
	\Line(143,9)(141,-15)
	\Line(164,8)(166,-12)
	\Line(141,-15)(166,-12)

	\put(151,19){\text{$E$}}
	\put(151,-5){\text{$\tilde{E}$}}
	
	\put(123,8){\circle{2}}
	\put(143,9){\circle{2}}
	\put(121,28){\circle{2}}
	\put(142,31){\circle{2}}
	\put(164,8){\circle{2}}
	\put(163,33){\circle{2}}
	\put(184,32){\circle{2}}
	\put(186,6){\circle{2}}
	\put(140,53){\circle{2}}
	\put(164,54){\circle{2}}
	\put(141,-15){\circle{2}}
	\put(166,-12){\circle{2}}
	
	\put(138,11){\text{$\r_1$}}
	\put(165,9.5){\text{$\r_2$}}
	\put(164.5,30){\text{$\r_3$}}
	\put(137.5,27.5){\text{$\r_4$}}
	\put(135,-15){\text{$\tilde{\r}_1$}}
	\put(168,-13.5){\text{$\tilde{\r}_2$}}	
	
	\Line(153.5,7.5)(155.5,9.5)
	\Line(155.5,7.5)(153.5,9.5)
	
	\Line(152,31)(154,33)
	\Line(154,31)(152,33)
	
	\Line(162.5,20)(164.5,22)
	\Line(164.5,20)(162.5,22)
	
	\Line(141.5,20)(143.5,22)
	\Line(143.5,20)(141.5,22)
	
	\put(152.5,3.5){\text{$q_1$}}
	\put(166,21){\text{$q_2$}}
	\put(152,35.5){\text{$q_3$}}
	\put(136.5,21){\text{$q_4$}}
	
	\end{picture}
	}
	\caption{Left: interior element; middle: element with bottom edge on $\Gamma_N$; right: an interior element, surrounded by four elements. }
	\label{macroelements}	
\end{figure}


\begin{lemma}\label{N_M-lemma}
Let \ref{M1} hold. If $E$ is
a parallelogram, then $N_E$ is one-dimensional, consisting of
functions that are constant on $E$; otherwise $N_E = 0$.
\end{lemma}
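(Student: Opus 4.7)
The plan is to evaluate $b(q, w)_{Q,E}$ on a basis of $Q^e_h(E)$ and reduce the characterization of $N_E$ to a small linear system on the vertex values $w_i = w(\r_i)$ of $w \in W_h(E)$. Pushing the form to the reference element and using the identity $J_E \DF_E^{-T} = \mathrm{cof}(\DF_E)$, one obtains
\begin{equation*}
b(q, w)_{Q,E} = \frac{|\Eh|}{4} \sum_{i=1}^4 M_i : \hat\nabla \hat q(\rh_i)\, \hat w_i, \qquad M_i := \mathrm{cof}(\DF_E(\rh_i)).
\end{equation*}
The geometric content is encoded in $M_i$: from \eqref{df-map1}, the columns of $\DF_E(\rh_i)$ are the two edge vectors of $E$ emanating from $\r_i$, so e.g.\ $\DF_E(\rh_1) = [\r_{21},\r_{41}]$, $\DF_E(\rh_2) = [\r_{21},\r_{32}]$, and cyclically for the remaining two vertices.

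Next, since $\SS_2(\Eh)$ has 4 vertex DOFs and 4 edge-midpoint DOFs per component, $Q^e_h(E)$ is the 8-dimensional span of the four edge-midpoint nodal basis functions, one per component. I take the explicit basis $\phi_1 = 4\xh(1-\xh)(1-\yh)$ for the midpoint of edge $\r_1\r_2$, together with its three cyclic analogues $\phi_2,\phi_3,\phi_4$. Since $\hat\nabla \phi_j$ is nonzero only at the two vertices incident to edge $j$, substituting $q = (\phi_j, 0)$ into $b(q,w)_{Q,E} = 0$ produces a scalar identity coming from row $1$ of $M_i$, while $q = (0,\phi_j)$ produces one from row $2$. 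Because row $1$ of $M_i$ contains (up to sign) the $y$-components and row $2$ the $x$-components of the two edge vectors incident to $\r_i$, the two scalar identities for each $j$ combine into a single vector identity in $\R^2$. Carrying this out for $j = 1, 2, 3, 4$ yields the four equalities
\begin{equation*}
w_1 \r_{41} = w_2 \r_{32}, \quad w_4 \r_{41} = w_3 \r_{32}, \quad w_1 \r_{21} = w_4 \r_{34}, \quad w_2 \r_{21} = w_3 \r_{34}.
\end{equation*}

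The conclusion splits cleanly. If $E$ is a parallelogram, then $\r_{41} = \r_{32}$ and $\r_{21} = \r_{34}$, so the four identities collapse to $w_1 = w_2 = w_3 = w_4$, and $N_E$ is one-dimensional consisting of constants. If $E$ is not a parallelogram, the closure relation $\r_{21} - \r_{34} = \r_{41} - \r_{32}$ together with the non-degeneracy of $E$ rules out both pairs of opposite edge vectors being parallel without being equal; hence at least one of $(\r_{21},\r_{34})$ or $(\r_{41},\r_{32})$ is linearly independent in $\R^2$. In either sub-case, two of the four vector identities force $w_1 = w_2 = w_3 = w_4 = 0$, so $N_E = 0$. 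Assumption \ref{M1} ensures that, if $E$ has an edge on $\Gamma_N$ and the corresponding midpoint basis function is suppressed, at least three of the four vector identities still survive, which is enough for the same case analysis.

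The main obstacle is simply the bookkeeping in the explicit computation of $M_i : \hat\nabla \phi_j(\rh_i)$ across the eight basis functions. The structural reason the argument succeeds is already visible above: $M_i$ encodes exactly the two edge vectors incident to $\r_i$, and the two scalar conditions from the two components of a test function pair naturally into a single $\R^2$ equation that identifies a pair of $w_i$'s through an edge-vector equality, which becomes trivial precisely on parallelograms.
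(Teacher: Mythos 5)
Your proof is correct and follows essentially the same route as the paper's: both test $b(\cdot,w)_{Q,E}$ against the eight edge degrees of freedom of $\SS_2(E)^2$, reduce membership in $N_E$ to a small linear system in the four vertex values $w(\r_i)$, and observe that the system's solution set degenerates to the constants exactly when $E$ is a parallelogram. Your packaging of each edge's two scalar conditions into a single vector identity $w_i\,\r_e = w_j\,\r_{e'}$, combined with the closure relation $\r_{21}-\r_{34}=\r_{41}-\r_{32}$ in the non-parallelogram case, is a cleaner bookkeeping of the same computation that the paper carries out coordinate-by-coordinate after normalizing one edge to be horizontal.
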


\begin{proof}
For any $q\in Q_h(E),\, w\in W_h(E)$, we have 
\begin{align*}
 b(q, w)_{Q,E} = (\tr(\nabla q),w)_{Q,E} = 
\frac{1}{4}\sum_{j=1}^4\tr\left[ \DF^{-T}_{E}(\hat{\r}_j) 
\hat{\nabla}\hat{q}(\hat{\r}_j)\right]\hat{w}(\hat{\r}_j)J_{E}(\hat{\r}_j).
 \end{align*}
Consider first an element with no edges on $\Gamma_N$.  Denote the
basis functions for $Q^e_h(E)$ by $q_i = q^n_i + q^t_i, \,
i=1,\dots,4$, see Figure~\ref{macroelements} (left). Without loss of
generality, assume that the edge corresponding to $q_1$ is
horizontal, i.e., $y_2 - y_1 = 0$, $x_2-x_1 \neq 0$, $x_3-x_4\neq 0$,
$y_4-y_1 \neq 0$, and $y_3-y_2\neq 0$. A direct calculation gives
\begin{align}
 b(q^t_1, w)_{Q,E}& =(y_4-y_1)w(\r_1) +(y_2-y_3)w(\r_2), \label{q-tan-1}\\
 b(q^n_1, w)_{Q,E}& =(y_1-y_2)w(\r_1) +(y_2-y_1)w(\r_2), \label{q-norm-1}\\ 
 b(q^t_2, w)_{Q,E}& =(x_2-x_1)w(\r_2) +(x_4-x_3)w(\r_3), \label{q-tan-2} \\
 b(q^n_2, w)_{Q,E}& =(x_2-x_3)w(\r_2) +(x_3-x_2)w(\r_3), \label{q-norm-2}\\
 b(q^t_3, w)_{Q,E}& =(y_2-y_3)w(\r_3) +(y_4-y_1)w(\r_4), \label{q-tan-3}\\
 b(q^n_3, w)_{Q,E}& =(y_3-y_4)w(\r_3) +(y_4-y_3)w(\r_4). \label{q-norm-3}\\
 b(q^t_4, w)_{Q,E}& =(x_2-x_1)w(\r_1) +(x_4-x_3)w(\r_4), \label{q-tan-4}\\
 b(q^n_4, w)_{Q,E}& =(x_1-x_4)w(\r_1) +(x_4-x_1)w(\r_4). \label{q-norm-4}
 \end{align}
Let us set the above quantities equal to zero. Consider the vertically oriented
 edges of $E$. From \eqref{q-tan-2} and \eqref{q-tan-4} 
we get
\begin{align}\label{element-1-tang}
 w(\r_2) = w(\r_3) \frac{x_4-x_3}{x_1-x_2}, \quad 
 w(\r_1) = w(\r_4) \frac{x_4-x_3}{x_1-x_2}.
 \end{align}
If $x_2\neq x_3$, we also get from \eqref{q-norm-2} that $w(\r_2) =
w(\r_3)$. This together with \eqref{element-1-tang} implies that
$w(\r_1) = w(\r_4)$. Similarly, if $x_1\neq x_4$, it follows from
\eqref{q-norm-4} that $w(\r_1)=w(\r_4)$, and \eqref{element-1-tang}
implies that $w(\r_2) = w(\r_3)$. Finally, if $x_2=x_3$ and $x_1=x_4$,
we arrive to the same conclusion directly from
\eqref{element-1-tang}.

Next, consider the edges corresponding to $q_1$ and $q_3$. From 
\eqref{q-tan-3} we get
\begin{align}
  w(\r_3) &= w(\r_4) \frac{y_1-y_4}{y_2-y_3} \label{element-1-tang-3}.
  \end{align} 	
If $y_3\neq y_4$, \eqref{q-norm-3} implies that $w(\r_3) = w(\r_4)$.
If $y_3=y_4$, since $y_1=y_2$, we obtain
from \eqref{element-1-tang-3} that $w(\r_3) = w(\r_4)$. Hence, $w$
must be constant on $E$.

We next consider the case when one of the edges of $E$ is on
$\Gamma_N$. Let this be the edge associated with $q_1$,
 as shown on Figure \ref{macroelements} (middle). Since the
above argument above did not use \eqref{q-tan-1} or \eqref{q-norm-1}, the
conclusion still applies.

Finally, if $w$ is a non-zero constant in $N_E$, setting the equations
\eqref{q-norm-1}--\eqref{q-norm-4} to zero implies that $E$ is a 
parallelogram.
\end{proof}

\begin{theorem}\label{macro-inf-sup}
If \ref{M1}--\ref{M2} are satisfied, then \eqref{stokes-pair} holds.
\end{theorem}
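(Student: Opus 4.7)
My plan is to follow the macroelement framework of Stenberg \cite{stenberg1984analysis}, adapted to the two complications present here: the quadrature rule in $b(\cdot,\cdot)_Q$ and the bilinear, non-affine map $F_E$. The difficulty flagged in the preceding discussion is that I cannot restrict the local velocity test functions to vanish on the macroelement boundary, because the quadrature couples vertex values of $w$ inside $M$ with edge basis functions of $q$ supported on adjoining elements. Hypothesis \ref{M2} will be used precisely to control this cross-macroelement leakage.

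Step 1 (local macroelement inf-sup). I would cover $\Tc_h$ by a family of overlapping macroelements $\{M\}$, each containing a bounded number of elements. Set
\[
N_M = \{w \in W_h|_M : b(q,w)_{Q,M} = 0 \text{ for all } q \in Q_h^e(M)\},
\]
where $Q_h^e(M)$ is the span of edge basis functions of $Q_h$ interior to $M$. Applying Lemma~\ref{N_M-lemma} element by element, together with the continuity of $w \in W_h \subset H^1(\O)$ across interior interfaces of $M$, I expect $N_M=\{0\}$ whenever $M$ contains a non-parallelogram element or touches $\Gamma_N$, and otherwise $N_M$ reduces to the constants on $M$. A finite-dimensional Peetre-type argument, made uniform in $h$ through the scalings \eqref{scaling-of-mapping}, then upgrades this qualitative statement to a quantitative local inf-sup
\[
\sup_{q \in Q_h^e(M)} \frac{b(q,w)_{Q,M}}{\|q\|_{1,M}} \ge c\,\|w - P_{N_M} w\|_{M}, \qquad \forall w \in W_h|_M,
\]
with $c$ independent of $h$ under shape-regularity.

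Step 2 (global assembly). Given $w \in W_h$, I would split $w = w^\perp + w^0$, where $w^0$ collects the kernel modes $P_{N_M} w$ from the all-parallelogram interior macroelements and $w^\perp$ is its $L^2$-complement. For $w^\perp$ I would paste the local test functions from Step~1 via a partition of unity subordinate to $\{M\}$ to produce $q_1 \in Q_h$ satisfying $b(q_1,w^\perp)_Q \ge c_1\|w^\perp\|^2 - R$, where $R$ collects the cross-macroelement terms arising because the local velocities do not vanish on $\partial M$. These cross-terms are exactly the ones computed in \eqref{q-tan-1}--\eqref{q-norm-4} but across an interelement interface; by \ref{M2}, on any pair of adjacent elements involving a non-parallelogram the relevant opposite-edge vectors differ by $O(h^2)$, while for two parallelograms they cancel identically, so with \eqref{scaling-of-mapping} the remainder $R$ is of higher order and absorbable once $h$ is small enough. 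For $w^0$, which is constant on each parallelogram macroelement, I would invoke the classical Stenberg inf-sup for $\mathcal{SS}_2\times\Qc_1$ on rectangular grids \cite{stenberg1984analysis} applied to the reference macroelement pattern, transport the test function through the Piola map, and use a further $O(h^2)$ consistency bound to pass from $b(\cdot,\cdot)$ to $b(\cdot,\cdot)_Q$. Setting $q = q_1 + \delta q_2$ for a small fixed $\delta$ and using $\|q\|_1 \le C\|w\|$ from shape-regular scaling yields \eqref{stokes-pair}. The main obstacle I anticipate is precisely this bookkeeping in Step~2: the interface leakage $R$ must be shown to be strictly smaller than the leading term from Step~1, which is where both clauses of \ref{M2} (the $O(h^2)$ geometric condition \emph{and} the smallness of $h$) enter in an essential way.
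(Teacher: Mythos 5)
Your overall framework is the same as the paper's: a macroelement argument in the spirit of Stenberg, with the kernel characterization of Lemma~\ref{N_M-lemma}, assumption \ref{M2} invoked to handle the fact that local test functions do not vanish on macroelement boundaries, and Stenberg's Fortin-type operator used to control the constant kernel modes (the paper's Lemma~\ref{macro-lemma-3}, where exactness of the trapezoidal rule on bilinears converts $b(\cdot,\cdot)$ into $b(\cdot,\cdot)_Q$ at no cost). However, there is a genuine gap in your Step~2, and it sits exactly at the point you yourself flag as the main obstacle. You claim that the cross-macroelement leakage $R$ is ``of higher order and absorbable once $h$ is small enough.'' It is not. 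If $q_E$ is the local test function built on $E$, its contribution on a neighbor $\tilde E$ is $b(q_E,w)_{Q,\tilde E}=\sum_{i,j}\alpha_i\sigma_{i,j}w(\r_j)$ with $|\sigma_{i,j}|=O(h)$ and $|\alpha_i|=O(h)\max_k|w(\r_k)|$, so the leakage is $O\bigl(h^2\sum_j(w(\r_j))^2\bigr)$ --- by \eqref{local-norm-equiv} this is the \emph{same} order as the leading term $\|(I-\mathbb{P}_h^E)w\|_E^2$ it would need to be absorbed into. What \ref{M2} actually gives is not smallness of the leakage but smallness of the \emph{difference} between the neighbor coefficients $\sigma_{i,j}$ and the own-element coefficients $\delta_{i,j}$; the paper's Lemma~\ref{macro-lemma-2} then concludes $b(q_E,w)_{Q,\tilde E}=b(q_E,w)_{Q,E}+O\bigl(h^3\sum_j(w(\r_j))^2\bigr)\ge 0$, i.e.\ the leakage is controlled by a \emph{sign} argument (it nearly replicates the positive own-element contribution), not by an order-of-magnitude argument. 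Similarly, for two adjacent parallelograms the terms in \eqref{q-tan-1-3-a} do not ``cancel identically''; they are positive multiples of one another, and one must additionally prune the linear combination $q_E=\sum\alpha_iq_i$ so that every retained term is non-negative. Without this sign structure your inequality $b(q_1,w^\perp)_Q\ge c_1\|w^\perp\|^2-R$ cannot be closed.

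A second, smaller omission: your local construction covers elements that are non-parallelograms and (implicitly) all-parallelogram patches, but the positivity argument above only works for a non-parallelogram $E$ or for a parallelogram $E$ \emph{all of whose neighbors are parallelograms}. Vertices shared only by parallelograms at least one of which abuts a non-parallelogram (the paper's set $I_2$) fall through both cases; the paper controls them by chaining along an edge, via the difference bound \eqref{Ej}, to a vertex that is already controlled. Your partition-of-unity pasting does not account for this intermediate class, so the covering of all degrees of freedom of $W_h$ is incomplete as written.
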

The proof of Theorem \ref{macro-inf-sup} is based on several auxiliary lemmas.

\begin{lemma}\label{macro-lemma-1}
If \ref{M1} holds, then there exists a constant $\beta > 0$ independent of $h$ 
such that, 
\begin{align*}
\forall \, T \in \Tc_h, \quad
\sup_{0\neq q \in Q^e_h(E)}\frac{b(q,w)_{Q,E} }{\|q\|_{1,E}} 
\geq \beta\|w\|_{E},\, \forall w \in W_h(E)/N_E.
\end{align*}
\end{lemma}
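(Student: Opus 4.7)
The idea is to recast the lemma as a finite-dimensional nondegeneracy statement on each $E$ and then extract uniformity in $h$ by a scaling argument. First I would observe that $W_h(E)$ is four-dimensional (the vertex values of a bilinear) and $Q^e_h(E)$ is eight-dimensional, spanned by the tangential and normal edge basis functions $q_i^t, q_i^n$, $i=1,\ldots,4$, used in the proof of Lemma~\ref{N_M-lemma}. The quadrature form $b(\cdot,\cdot)_{Q,E}$ induces a linear map $T_E \colon W_h(E)/N_E \to (Q^e_h(E))^*$ by $T_E(w) = b(\cdot, w)_{Q,E}$, and Lemma~\ref{N_M-lemma} states exactly that $\ker T_E = \{0\}$. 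Since both spaces are finite-dimensional, injectivity of $T_E$ is equivalent to the existence of a positive element-wise constant $\beta(E) > 0$ for which the stated inequality holds.

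To make $\beta(E)$ independent of $h$, I would pull the inequality back to the reference square $\hat E$ via $F_E$, using composition for both $w$ and $q$ (since $Q_h$ and $W_h$ are $H^1$-conforming serendipity and bilinear spaces). The scaling estimates \eqref{scaling-of-mapping} give $\|w\|_E \sim h \|\hat w\|_{\hat E}$ and $|q|_{1,E} \sim |\hat q|_{1,\hat E}$, while the explicit vertex formulas \eqref{q-tan-1}--\eqref{q-norm-4} show that $b(q, w)_{Q,E}$ is a bilinear combination of the vertex values of $q$ and $w$ with coefficients polynomial in the entries of $\DF_E$ at the vertices of $E$, each of size $O(h)$. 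Factoring out the appropriate $h$-powers converts the inequality into a statement on $\hat E$ whose constant depends only on the normalized vertex configuration $\{\r_i/h\}$. Shape-regularity and quasi-uniformity of $\Tc_h$ confine these normalized configurations to a compact set of admissible quadrilateral shapes, on which $\beta$ is continuous and positive, yielding a uniform lower bound.

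The main obstacle is the dichotomy of $\dim N_E$ between parallelogram and non-parallelogram elements identified in Lemma~\ref{N_M-lemma}, which could in principle cause $\beta(E)$ to degenerate as a non-parallelogram $E$ approaches a parallelogram. I would address this by decomposing $W_h(E) = \R \oplus W_h^\perp(E)$ into constants and the mean-free part, and controlling each mode separately. The mean-free part is handled uniformly over shape-regular quadrilaterals by the tangential identities \eqref{q-tan-1}--\eqref{q-tan-4} from the proof of Lemma~\ref{N_M-lemma}, which supply enough test functions to recover the vertex-by-vertex equalities forcing $w$ to be constant. The constant mode belongs to $N_E$ precisely on parallelograms, and is therefore removed by the quotient in that case; on non-parallelograms it is controlled by the normal identities \eqref{q-norm-1}--\eqref{q-norm-4} via the coefficients $x_i-x_j$, $y_i-y_j$ that characterize the non-parallelogram case in Lemma~\ref{N_M-lemma}. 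Combining the two bounds produces the desired $\beta$ independent of $h$.
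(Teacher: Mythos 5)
Your plan coincides with the paper's proof, which is exactly this argument in compressed form: Lemma~\ref{N_M-lemma} identifies $N_E$ as the kernel of the finite-dimensional map $w \mapsto b(\cdot,w)_{Q,E}$ on $W_h(E)$, and a scaling-plus-compactness argument in the style of \cite[Lemma 3.1]{stenberg1984analysis} converts the resulting element-wise positive constant into one uniform in $h$ over shape-regular, quasi-uniform grids. One correction to your last paragraph: for a constant $w$ the normal forms \eqref{q-norm-1}--\eqref{q-norm-4} vanish identically (each involves only a difference of two vertex values of $w$), so it is the tangential forms \eqref{q-tan-1}--\eqref{q-tan-4}, whose values on constants are proportional to $(y_4-y_1)-(y_3-y_2)$ and $(x_2-x_1)-(x_3-x_4)$, that detect the non-parallelogram case; since these quantities can be arbitrarily small for a shape-regular non-parallelogram, the near-parallelogram degeneracy you flag is genuine and is not fully removed by your constant/mean-free splitting --- the paper's one-line proof passes over the same point, so you have reproduced (and in fact scrutinized more closely) its argument rather than diverged from it.
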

\begin{proof}
The proof follows from Lemma~\ref{N_M-lemma} 
and a scaling argument, see
\cite[Lemma 3.1]{stenberg1984analysis}.
\end{proof}

For $E \in \Tc_h$,
let $\mathbb{P}_h^E$ denote the $L^2$-projection from $W_h(E)$ onto $N_E$.

\begin{lemma}\label{macro-lemma-2}
If \ref{M1} and \ref{M2} hold, then there exists a constant $C_1 >0$, such that 
for every $w \in W_h$ and for every $E \in \Tc_h$ that is either a 
non-parallelogram or a parallelogram that neighbors parallelograms,
there exists $q_E \in Q^e_h(E)$ satisfying
\begin{align}\label{macro-lemma-2-ineq}
b(q_E,w)_{Q}  \geq C_1 \|(I-\mathbb{P}_h^E)w\|_E^2 \quad \mbox{and} 
\quad \|q_E\|_1 \leq \|(I-\mathbb{P}_h^E)w\|_E.
\end{align}
\end{lemma}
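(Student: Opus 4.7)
The plan is to build $q_E$ using Lemma~\ref{macro-lemma-1} on $E$, and then account for the contributions that $q_E$, viewed as a global $H^1$ function, generates on elements neighboring $E$ through the edge degrees of freedom they share with $E$. First, I would apply Lemma~\ref{macro-lemma-1} to obtain $\tilde q \in Q^e_h(E)$ with $b(\tilde q,w)_{Q,E} \geq \beta \,\|(I - \mathbb{P}_h^E) w\|_E \,\|\tilde q\|_{1,E}$, and rescale so that $\|\tilde q\|_{1,E} = \|(I - \mathbb{P}_h^E) w\|_E$. This gives the desired local positive bound $b(\tilde q, w)_{Q,E} \geq \beta \,\|(I - \mathbb{P}_h^E) w\|_E^2$. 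Shape regularity and the fact that $\tilde q$ on any neighbor $E'$ is determined by the shared edge degrees of freedom yield $\|\tilde q\|_{1,E'} \le C \|\tilde q\|_{1,E}$, and hence a global bound $\|\tilde q\|_1 \le C \|(I - \mathbb{P}_h^E) w\|_E$.

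Next, I would decompose
\begin{equation*}
b(\tilde q, w)_Q = b(\tilde q, w)_{Q,E} + \sum_{E' \sim E} b(\tilde q, w)_{Q,E'},
\end{equation*}
where the sum is over the neighbors of $E$ sharing the edges supporting the chosen degrees of freedom of $\tilde q$. On each neighbor $E'$, I would expand $b(\tilde q, w)_{Q,E'}$ via formulas analogous to \eqref{q-tan-1}--\eqref{q-norm-4}, which express the quadrature form as a linear combination of values of $w$ at the vertices of the shared edge, with coefficients equal to differences of coordinates of the non-shared edges of $E'$ at the shared vertices.

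The hypothesis \ref{M2} then enters decisively: since $E$ or $E'$ is a non-parallelogram (the case when $E$ is a parallelogram neighboring parallelograms is treated separately using the affine structure of $F_{E'}$, where exactness of the quadrature on affine images combines with the vanishing of $N_{E'}$-orthogonal pieces to give a cleaner estimate), the non-shared edge vectors of $E'$ differ from the corresponding edge vectors of $E$ by at most $O(h^2)$. Consequently, the coefficients appearing in the $E'$-expansion of $b(\tilde q, w)_{Q,E'}$ are $O(h^2)$ perturbations of those appearing in the $E$-expansion rather than the generic $O(h)$, and so
\begin{equation*}
|b(\tilde q, w)_{Q,E'}| \leq C\, h\, \|\tilde q\|_{1,E'} \,\|w\|_{E'}.
\end{equation*}
Summing, using the local norm equivalence on $\tilde q$ and the smallness of $h$ permitted by \ref{M2}, the neighbor contributions are absorbed into the positive main term on $E$, leaving $b(\tilde q, w)_Q \geq C_1' \|(I - \mathbb{P}_h^E) w\|_E^2$. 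A final rescaling by a fixed constant produces $q_E$ satisfying both inequalities of \eqref{macro-lemma-2-ineq}.

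The principal obstacle is precisely what the quadrature rule breaks compared to Stenberg's original framework in \cite{stenberg1984analysis}: one cannot take $\tilde q$ supported away from the boundary of $E$ and still obtain a local control via $b(\cdot,w)_{Q,E}$, because the only basis functions at our disposal have support extending into the neighbors of $E$ through shared edge degrees of freedom. Proving that these extra neighbor contributions are sub-dominant requires a detailed vertex-level analysis of the quadrature formulas \eqref{q-tan-1}--\eqref{q-norm-4} combined with the geometric smoothness assumption \ref{M2}, and this is where the non-parallelogram restriction and the $O(h^2)$ edge-deviation hypothesis become essential.
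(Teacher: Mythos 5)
Your overall architecture matches the paper's: invoke Lemma~\ref{macro-lemma-1} on $E$, normalize, and then deal with the fact that $q_E\in Q^e_h(E)$ does not vanish on the neighbors of $E$. The decisive step, however, is flawed: the contribution of a neighbor $\tilde E$ is not small --- it is of the \emph{same order} as the main term and cannot be absorbed by taking $h$ small. Concretely, writing $q_E=\sum_i\alpha_i q_i$, the scaling \eqref{alpha-bound} gives $\alpha_i=O(h\max_j|w(\r_j)|)$, while the coefficients in the expansion \eqref{q-tan-3-a} of $b(q^t_1,w)_{Q,\tilde E}$ are generically $O(h)$; hence $b(q_E,w)_{Q,\tilde E}=O(h^2\sum_j (w(\r_j))^2)=O(\|w\|_E^2)$ by \eqref{local-norm-equiv}, which is exactly the size of the lower bound $b(q_E,w)_{Q,E}\ge C\|w\|_E^2$. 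Your claimed estimate $|b(\tilde q,w)_{Q,E'}|\le Ch\|\tilde q\|_{1,E'}\|w\|_{E'}$ does not follow from \ref{M2}: that assumption makes the neighbor's coefficients $\sigma_{i,j}$ an $O(h^2)$ perturbation of the coefficients $\delta_{i,j}$ of the expansion on $E$, so $b(q_E,w)_{Q,\tilde E}=b(q_E,w)_{Q,E}+O(h^3\sum_j (w(\r_j))^2)$; the leading part is $b(q_E,w)_{Q,E}$ itself, which is $\Theta(\|w\|_E^2)$, not $O(h\|w\|_E^2)$. With only an order-one (relative to the main term) bound on the neighbor contributions, the absorption step collapses.

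The paper turns the same observation to its advantage: for a non-parallelogram $E$ one has $\mathbb{P}_h^E w=0$ and $b(q_E,w)_{Q,E}\ge Ch^2\sum_j(w(\r_j))^2$, so the identity $b(q_E,w)_{Q,\tilde E}=b(q_E,w)_{Q,E}+O(h^3\sum_j(w(\r_j))^2)$ shows that each neighbor contribution is \emph{nonnegative} once $h\le C/\tilde C$ --- this is \eqref{q_m_positive} --- and then one simply drops these terms, obtaining $b(q_E,w)_Q\ge b(q_E,w)_{Q,E}$. The conclusion you need from \ref{M2} is a sign, not a smallness estimate. The parallelogram case also needs more than your sketch provides: there $\mathbb{P}_h^E w$ is the mean value rather than zero, and the paper instead shows via \eqref{q-tan-1-3-a} that $b(q_i,w)_{Q,\tilde E}=c_i\,b(q_i,w)_{Q,E}$ with $c_i>0$ and arranges $\alpha_i b(q_i,w)_{Q,E}\ge 0$ termwise, again yielding nonnegativity of the neighbor contributions rather than smallness.
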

\begin{proof}
Let $w \in W_h$. Due to
Lemma~\ref{N_M-lemma}, if $E$ is not a parallelogram, then
$\mathbb{P}_h^E w = 0$ on $E$. Otherwise, $\mathbb{P}_h^E w$ is the
mean value of $w$ on $E$. Lemma \ref{macro-lemma-1} implies that for every 
$E$ there exists $q_E \in Q^e_h(E)$ such that
\begin{align}
b(q_E,w)_{Q,E} = b(q_E,(I-\mathbb{P}_h^E)w)_{Q,E} 
\geq C \|(I-\mathbb{P}_h^E)w\|^2_{E} \quad \mbox{and} 
\quad \|q_E\|_{1,E} \leq \|(I-\mathbb{P}_h^E)w\|_{E}. \label{local-inf-sup-const}
\end{align}
We note that $q_E$ does not vanish outside of $E$; however, we will show 
that under assumption \ref{M2}
\begin{align}
b(q_E,w)_{Q, \O\setminus E} \geq 0. \label{q_m_positive}
\end{align} 
In order to prove \eqref{q_m_positive} let us consider a neighboring
element $\tilde E$, see Figure~\ref{macroelements} (right). Let $q_E =
\sum_{i=1}^4 \alpha_i q_i$. We first consider a non-parallelogram $E$.
Consider the
tangential degree of freedom $q^t_1$, associated with the edge
shared by $E$ and $\tilde E$. Using \eqref{q-tan-1}, we have
\begin{align}\label{q-tan-1-a}
b(q^t_1, w)_{Q,E} = (y_4-y_1)w(\r_1) + (y_2-y_3)w(\r_2) 
:= \sum_{j=1}^4 \delta^t_{1,j}w(\r_j),
\end{align}
where $ \delta^t_{1,1} = (y_4-y_1),\,\delta^t_{1,2} = (y_2-y_3) $ and 
$\delta^t_{1,j} =0 $ for $j=3,4$. For $q^n_1$, using 
\eqref{q-norm-1}, we have
\begin{align}\label{q-norm-1-a}
b(q^n_1, w)_{Q,E} = (y_1-y_2)w(\r_1) + (y_2-y_1)w(\r_2) 
:= \sum_{j=1}^4 \delta^n_{1,j}w(\r_j).
\end{align}
Using a similar expression for the rest of the degrees of freedom, we obtain
\begin{align*}
b(q_E,w)_{Q,E} = \sum_{i=1}^4 \alpha_i b(q_i,w)_{Q,E}
= \sum_{i=1}^4\sum_{j=1}^4
\alpha_i\delta_{i,j} w(\r_j),
\end{align*}
where $\delta_{i,j} = \delta^n_{i,j} + \delta^t_{i,j}$.
We note that for all $i,j$, $\delta_{i,j} = 0$ or $|\delta_{i,j}| =
O(h)$. Using \eqref{q-tan-3}, we also compute
\begin{align}\label{q-tan-3-a}
b(q^t_1, w)_{Q,\tilde{E}} = (y_1-\tilde y_1)w(\r_1) + (\tilde y_2-y_2)w(\r_2) 
:= \sum_{j=1}^4 \sigma^t_{1,j}w(\r_j),
\end{align}
where $\sigma^t_{1,1} = (y_1-\tilde y_1),\,\sigma^t_{1,2} = (\tilde y_2-y_2)$ 
and $\sigma^t_{1,j} =0 $ for $j=3,4$. Using \eqref{q-norm-3}, we have
\begin{align}\label{q-norm-3-a}
b(q^n_1, w)_{Q,\tilde E} = (y_1-y_2)w(\r_1) + (y_2-y_1)w(\r_2) 
:= \sum_{j=1}^4 \sigma^n_{1,j}w(\r_j).
\end{align}
Therefore, 
\begin{align*}
b(q_E, w)_{Q,\tilde{E}} = \sum_{i=1}^4 \alpha_i b(q_i,w)_{Q,\tilde E} = 
\sum_{i=1}^4\sum_{j=1}^4\alpha_i\sigma_{i,j}w(\r_j).
\end{align*}
Moreover, due to assumption \ref{M2},
\begin{align*}
\sigma_{i,j} = \delta_{i,j} +\theta_{i,j}, 
\end{align*}
with $\theta_{i,j}=0$ if $\delta_{i,j}=0$ and $|\theta_{i,j}|\leq Ch^2$ otherwise. 
Indeed, consider, e.g., $i=j=1$, then, by \ref{M2},
\begin{align*}
|\sigma_{1,1} -\delta_{1,1}| = |\sigma^t_{1,1} - \delta^t_{1,1}|
= |(y_1-\tilde y_1) -(y_4-y_1 )| \leq Ch^2.
\end{align*}
Therefore, we obtain
\begin{align}
b(q_E, w)_{Q,\tilde{E}} 
&= \sum_{i=1}^4\sum_{j=1}^4\alpha_i\sigma_{i,j}w(\r_j) 
= b(q_E, w)_{Q,E} + \sum_{i=1}^4\sum_{j=1}^4\alpha_i\theta_{i,j}w(\r_j)
\nonumber \\
&\geq Ch^2 \sum_{j=1}^4 (w(\r_j))^2 +\sum_{i=1}^4\sum_{j=1}^4\alpha_i\theta_{i,j}w(\r_j), \label{aux-1}
\end{align}
using the first inequality in \eqref{local-inf-sup-const}, that 
$\mathbb{P}_h^Ew = 0$, and that
\begin{equation}\label{local-norm-equiv}
\|w\|_E^2 \sim h^2 \sum_{j=1}^4 (w(\r_j))^2,
\end{equation}
which follows from the norm equivalence $\|w\|_{E} \sim \|w\|_{Q,E}$ 
stated in Lemma~\ref{coercivity-lemma} and the shape regularity of the mesh. 

Finally, the second
inequality in \eqref{local-inf-sup-const} and a scaling argument 
imply that for every
$i=1,\dots, 4$ there exist constants $b_{i,k}, k=1,\dots,4$,
independent of $h$ such that
 \begin{align}\label{alpha-bound}
\alpha_i = h\sum_{k=1}^4 b_{i,k}w(\r_k).
\end{align}
	Then, there exists a constant $\tilde{C}$ independent of $h$ such that
\begin{align}\label{aux-2}
\left| \sum_{i=1}^4\sum_{j=1}^4\alpha_i\theta_{i,j}w(\r_j)\right| 
= \left| \sum_{i=1}^4 h \sum_{k=1}^4 b_{i,k}w(\r_k)
\sum_{j=1}^4 \theta_{i,j}w(\r_j) \right| 
\leq \tilde{C}h^3 \sum_{j=1}^4 (w(\r_j))^2.
	\end{align}
Combining \eqref{aux-1}--\eqref{aux-2} and taking $h \le C/\tilde{C}$,
we obtain \eqref{q_m_positive}:
\begin{align*}
b(q_E, w)_{Q,\tilde{E}} 
& \geq  Ch^2 \sum_{j=1}^4 (w(\r_j))^2 - \tilde{C}h^3 \sum_{j=1}^4 (w(\r_j))^2 
\geq (C-\tilde{C}h)h^2\sum_{j=1}^4 (w(\r_j))^2 \ge 0.
	\end{align*}

Next, consider the case of a parallelogram $E$ with parallelogram 
neighbors. In this case, 
\eqref{q-tan-1-a} and \eqref{q-tan-3-a} give
\begin{equation}\label{q-tan-1-3-a}
b(q^t_1, w)_{Q,E} = (y_4-y_1)(w(\r_1) - w(\r_2)), \quad
b(q^t_1, w)_{Q,\tilde{E}} = (y_1-\tilde y_1)(w(\r_1) - w(\r_2)).
\end{equation}
Similarly, \eqref{q-norm-1-a} and \eqref{q-norm-3-a} give
$$
b(q^n_1, w)_{Q,E} = (y_1-y_2)(w(\r_1) - w(\r_2)), \quad
b(q^n_1, w)_{Q,\tilde E} = (y_1-y_2)(w(\r_1) - w(\r_2)).
$$
Similar relationships hold for the rest of the basis functions. Therefore
there exist positive constants $c_i$, $i = 1,\dots,4$, such that 
$b(q_i,w)_{Q,\tilde E} = c_i b(q_i, w)_{Q,E}$. We can assume that 
$\alpha_i b(q_i,w)_{Q,E} \ge 0$ for $i = 1,\dots,4$, since, if 
$\alpha_i b(q_i,w)_{Q,E} < 0$, it can be omitted from the linear combination
$q_E = \sum_{i=1}^4 \alpha_i q_i$ and the resulting $q_E$ would still 
satisfy \eqref{local-inf-sup-const}. Therefore, \eqref{q_m_positive} holds:
$$
b(q_E,w)_{Q, \tilde E} = \sum _{i=1}^4 \alpha_i b(q_i,w)_{Q,\tilde E} 
= \sum _{i=1}^4 c_i\alpha_i b(q_i,w)_{Q,E} \ge 0.
$$ 

The assertion of the lemma now follows from \eqref{local-inf-sup-const}
and \eqref{q_m_positive}, where the second inequality in \eqref{macro-lemma-2-ineq}
follows from \eqref{alpha-bound}.
\end{proof}

We next note that the element norm equivalence \eqref{local-norm-equiv}
implies that for $w \in W_h$,
\begin{equation}\label{global-norm-equiv}
\|w\|^2 \sim h^2 \sum_{j=1}^{N_W} (w(\r_j))^2,
\end{equation}
where $N_W$ is the number of degrees of freedom of $W_h$. Therefore,
to prove \eqref{stokes-pair}, it is sufficient to control $h^2
(w(\r_j))^2$.  We will consider three sets of vertices and show that
each set can be controlled. Let 
\begin{align*}
& I_1 = \{j: \r_j \mbox{ is a vertex of a non-parallelogram} \}, \\
& I_2 = \{j: \mbox{all elements sharing } \r_j
\mbox{ are parallelograms and at least one has a non-parallelogram neighbor} \}, \\
& I_3 = \{j: \mbox{all elements sharing } \r_j
\mbox{ are parallelograms with parallelogram neighbors} \}.
\end{align*}
Clearly the union of the three sets covers all vertices of the mesh.

\begin{lemma}\label{lemma-set12}
If \ref{M1}--\ref{M2} hold, there exists a constant $C$ independent of 
$h$ such that for every $w \in W_h$, 
there exists $q \in Q_h$ such that 
\begin{equation}\label{I1-I2}
b(q,w)_{Q}  \geq C h^2 \sum_{j \in I_1 \cup I_2} (w(\r_j))^2, 
\quad \|q\|_1 \leq \|w\|.
\end{equation}
\end{lemma}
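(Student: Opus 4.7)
The plan is to construct $q$ by summing local functions produced by Lemma \ref{macro-lemma-2}. Let $\mathcal{A}$ be the collection of elements to which that lemma applies---i.e., all non-parallelograms together with all parallelograms whose every neighbor is itself a parallelogram---and set $q := \sum_{E \in \mathcal{A}} q_E$, where $q_E$ is the function supplied by Lemma \ref{macro-lemma-2}. Since each $q_E \in Q^e_h(E)$ is supported on $E$ and its edge-neighbors, the shape regularity of $\Tc_h$ implies that every point of $\O$ lies in the support of at most a uniformly bounded number of $q_E$'s. Combined with the second inequality in \eqref{macro-lemma-2-ineq} and the $L^2$-orthogonality of $\mathbb{P}_h^E$, this yields
\[
\|q\|_1^2 \leq C \sum_{E \in \mathcal{A}} \|q_E\|_1^2 \leq C \sum_{E \in \mathcal{A}} \|(I-\mathbb{P}_h^E)w\|_E^2 \leq C \|w\|^2,
\]
so that after a harmless rescaling of $q$ we obtain the second bound in \eqref{I1-I2}.

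For the lower bound on $b(q,w)_Q$, the first inequality in \eqref{macro-lemma-2-ineq} is non-negative for each $E \in \mathcal{A}$, so summation gives
\[
b(q, w)_Q = \sum_{E \in \mathcal{A}} b(q_E, w)_Q \geq C_1 \sum_{E \in \mathcal{A}} \|(I-\mathbb{P}_h^E)w\|_E^2,
\]
and the goal becomes to show that the right-hand side dominates $C h^2 \sum_{j \in I_1 \cup I_2}(w(\r_j))^2$. The $I_1$ contribution is immediate: for every non-parallelogram $E \in \mathcal{A}$, Lemma \ref{N_M-lemma} gives $\mathbb{P}_h^E w = 0$, hence $\|(I-\mathbb{P}_h^E)w\|_E^2 = \|w\|_E^2$, and the local norm equivalence \eqref{local-norm-equiv} rewrites this as $h^2$ times the sum of squared vertex values on $E$. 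Since every $\r_j \in I_1$ is a vertex of at least one non-parallelogram and shape regularity bounds multiplicity, we extract $C h^2 \sum_{j \in I_1} (w(\r_j))^2$.

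The main obstacle is the $I_2$ contribution, because a vertex $\r_j \in I_2$ need not lie on any element of $\mathcal{A}$. The plan is to exploit that, by definition of $I_2$, some parallelogram $E \ni \r_j$ has a non-parallelogram neighbor $\tilde E \in \mathcal{A}$, and the two endpoints of the shared edge $E \cap \tilde E$ belong to $I_1$ and are therefore already controlled by $q_{\tilde E}$. Re-examining the perturbation analysis from the proof of Lemma \ref{macro-lemma-2}, the contribution $b(q_{\tilde E}, w)_{Q, E}$ equals $b(q_{\tilde E}, w)_{Q, \tilde E}$ up to an $O(h^2)$ remainder involving only those shared $I_1$ vertices, thanks to \ref{M2}. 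If some parallelogram $E' \ni \r_j$ does belong to $\mathcal{A}$, the variance bound $\|(I-\mathbb{P}_h^{E'})w\|_{E'}^2 \sim h^2 \sum_{\r_k \in \mathrm{verts}(E')}(w(\r_k) - \bar w_{E'})^2$ controls $(w(\r_j) - \bar w_{E'})^2$, and $\bar w_{E'}$ is controlled through adjacent $I_1$ values via the continuity of $w \in W_h \subset H^1(\O)$. In the hardest sub-case, where every parallelogram containing $\r_j$ has a non-parallelogram neighbor, $(w(\r_j))^2$ must be recovered entirely through the cross-element contributions of the nearby $q_{\tilde E}$'s, relying on \ref{M2} and the perturbation estimates \eqref{aux-1}--\eqref{aux-2} to absorb the remainders into the already-established $I_1$ control after choosing $h$ sufficiently small. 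This propagation-and-absorption step is the technical heart of the argument.
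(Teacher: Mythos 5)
Your treatment of $I_1$ is fine and matches the paper's: for a non-parallelogram $E$ one has $\mathbb{P}_h^E w = 0$, so Lemma~\ref{macro-lemma-2} together with \eqref{local-norm-equiv} controls $h^2(w(\r_j))^2$ for every vertex of $E$. The problem is the $I_2$ case, which you correctly identify as the heart of the matter but do not actually prove — and your ansatz $q=\sum_{E\in\mathcal{A}}q_E$ cannot prove it. Each $q_{\tilde E}\in Q^e_h(\tilde E)$ is a combination of edge degrees of freedom of $\tilde E$, and by the explicit formulas \eqref{q-tan-1-a}--\eqref{q-norm-3-a} its contribution to $b(\cdot,w)_Q$, whether on $\tilde E$ or on a neighbor, involves $w$ only at the \emph{vertices of $\tilde E$}. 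A vertex $\r_j\in I_2$ is by definition not a vertex of any non-parallelogram, so $w(\r_j)$ simply never appears in $b(q_{\tilde E},w)_Q$ for non-parallelogram $\tilde E$; the ``cross-element contributions of the nearby $q_{\tilde E}$'s'' on which your hardest sub-case relies are independent of $w(\r_j)$ and cannot recover $(w(\r_j))^2$. In the other sub-case, a parallelogram $E'\ni\r_j$ belonging to $\mathcal{A}$ only controls $\|(I-\mathbb{P}_h^{E'})w\|_{E'}^2$, i.e.\ the deviation of $w$ from its mean on $E'$, and your claim that the mean $\bar w_{E'}$ ``is controlled through adjacent $I_1$ values via continuity'' is unsubstantiated: the vertices of $E'$ need not be adjacent to any non-parallelogram, and no term in your $b(q,w)_Q$ pins down that mean.

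The missing ingredient is a new function, not of the form $q_E$ for $E\in\mathcal{A}$. The paper's construction is: pick a vertex $\r_k$ sharing an edge with $\r_j$ such that $\r_k$ is a vertex of a non-parallelogram (possible since some element containing $\r_j$ has a non-parallelogram neighbor), so $\r_k\in I_1$. The two elements sharing the edge $\r_j\r_k$ both contain $\r_j$, hence are both parallelograms, and \eqref{q-tan-1-3-a} shows the tangential edge basis function $q^t_1$ on that edge satisfies $b(q^t_1,w)_{Q,E}=(y_4-y_1)(w(\r_j)-w(\r_k))$ and similarly on the other side. Taking $\tilde q_j=ch\,(w(\r_j)-w(\r_k))\,q^t_1$ yields $b(\tilde q_j,w)_Q\ge Ch^2(w(\r_j)-w(\r_k))^2$ with $\|\tilde q_j\|_1\le\|w\|_E$, and adding the $I_1$ function $q_k$ controlling $h^2(w(\r_k))^2$ gives $h^2(w(\r_j))^2$ by the triangle inequality. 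Without this edge-based construction (or an equivalent device that makes $w(\r_j)$ appear in the bilinear form), your proof does not close.
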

\begin{proof}
If $j \in I_1$, Lemma~\ref{macro-lemma-2} and \eqref{local-norm-equiv} 
imply that there exists $q_j \in Q^e_h(E)$ such that 
\begin{equation}\label{I1}
b(q_j,w)_{Q}  \geq C h^2 (w(\r_j))^2, \quad \|q_j\|_1 \leq \|w\|_E,
\end{equation}
where $E$ is the non-parallelogram element with vertex $\r_j$. 

Next, consider $j \in I_2$. Let $\r_k$ share an edge with
$\r_j$. Note that its two neighboring elements are
parallelograms. Denote them by $E$ and $\tilde E$ and let $q^t_1$ be the
tangential edge basis function. Using \eqref{q-tan-1-3-a}, we can take
$\tilde q_j = c h(w(\r_j) - w(\r_k))q^t_1$, which satisfies
\begin{equation}\label{Ej}
b(\tilde q_j,w)_{Q}  \geq C h^2 (w(\r_j) - w(\r_k))^2, 
\quad \|\tilde q_j\|_1 \leq \|w\|_E.
\end{equation}
Let $\r_k$ be the vertex that belongs to a non-parallelogram, denoted by $E_k$.
Then \eqref{I1} implies that there exists $q_k \in Q^e_h(E_k)$ such that
\begin{equation}\label{Ek}
b(q_k,w)_{Q}  \geq C h^2 (w(\r_k))^2, \quad \|q_k\|_1 \leq \|w\|_{E_k}.
\end{equation}
Let $q_j = \tilde q_j + q_k$. Due to \eqref{Ej} and \eqref{Ek}, $q_j$ satisfies
\begin{equation}\label{I2}
b(q_j,w)_{Q}  \geq C h^2 (w(\r_j))^2, \quad \|q_j\|_1 \leq \|w\|_{E \cup E_k}.
\end{equation}
Finally, $q \in Q_h$ defined as the sum of the functions constructed 
in \eqref{I1} and \eqref{I2} satisfies \eqref{I1-I2}.
\end{proof}

We now consider the set of vertices $I_3$. If $\r_j$ and $\r_k$ are
two vertices in the set that share an edge, \eqref{Ej} implies that if
one of them is controlled, then so is the other. Therefore it is enough
to consider a subset of vertices that do not share an edge, which we
denote by $\tilde I_3$. For each vertex $\r_j$, let $M_j$ be the union
of elements that share $\r_j$. We note that the set $S = \{M_j: j \in \tilde I_3\}$
is non-overlapping. Let $\bar M = \cup_{j \in \tilde I_3} M_j$.
For $M \in S$, let $Q^e_h(M)$
be the span of all edge degrees of freedom of $Q_h(M)$ and let
\begin{align*}
N_M = \{w \in W_h(M) :\, b(q,w)_{Q,M} = 0,\, \forall q\in Q^e_h(M) \}.
\end{align*}
Recall that all elements in $M$ are parallelograms. The argument of
Lemma~\ref{N_M-lemma} can be easily extended to show that $N_M$
consists of constant functions. For $M \in S$, let 
$\mathbb{P}_h^M$ denote the $L^2$-projection from $W_h(M)$ onto $N_M$.
Since the neighbors of all elements in $M$ are parallelograms, 
Lemma~\ref{macro-lemma-2} implies that for any $w \in W_h$, 
there exists $q_M \in Q^e_h(M)$ satisfying
\begin{align}\label{macro-lemma-2-ineq-M}
b(q_M,w)_{Q}  \geq C_1 \|(I-\mathbb{P}_h^M)w\|_M^2 \quad \mbox{and} 
\quad \|q_M\|_1 \leq \|(I-\mathbb{P}_h^M)w\|_M.
\end{align}
Let
\begin{align*}
M_h =\{\mu \in L^2(\Omega): \mu\big|_M \in N_M, \,\, \forall M \in S,
\, \mu = 0 \mbox{ otherwise}\}.
\end{align*}
Let $\mathbb{P}_h$ be the $L^2$-projection from $W_h$ onto $M_h$. Then
\eqref{macro-lemma-2-ineq-M} implies that for any $w \in W_h$, there 
exists $\tilde q \in Q_h$ satisfying
\begin{align}\label{macro-lemma-2-ineq-global}
b(\tilde q,w)_{Q}  \geq C_1 \|(I-\mathbb{P}_h)w\|_{\bar M}^2 \quad \mbox{and} 
\quad \|\tilde q\|_1 \leq \|(I-\mathbb{P}_h)w\|_{\bar M}.
\end{align}
The next lemma shows that $\mathbb{P}_h w$ can also be controlled.
\begin{lemma}\label{macro-lemma-3}
If \ref{M1} holds, there exists a constant $C_2>0$ such 
that for every $w \in W_h$ there exists $g \in Q_h$ such that
\begin{align*}
b(g,\mathbb{P}_h w)_{Q} = \|\mathbb{P}_h w\|_{\bar M}^2\quad \text{ and }\quad 
\|g\|_1 \leq C_2\|\mathbb{P}_h w\|_{\bar M}.
\end{align*}
\end{lemma}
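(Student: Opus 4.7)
The plan is to prove Lemma \ref{macro-lemma-3} by reducing the desired equality to a continuous Stokes inf-sup condition with mixed boundary conditions, and then correcting a standard interpolation in $Q_h$ so that the quadrature divergence matches the required macroelement moments.

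First, since $\Gamma_D \neq \emptyset$ and $\mathbb{P}_h w \in L^2(\Omega)$, a Bogovskii-type construction for the Stokes problem with mixed boundary conditions produces $v \in H^1(\Omega,\R^2)$ with $v = 0$ on $\Gamma_N$, $\dvr v = \mathbb{P}_h w$ a.e.\ in $\Omega$, and $\|v\|_1 \le C\|\mathbb{P}_h w\|_{\bar M}$. No compatibility condition on the mean of $\mathbb{P}_h w$ is required because the flux through $\Gamma_D$ absorbs the total divergence.

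Next, I would construct $g := \Pi_h v \in Q_h$ as a Fortin-type interpolant satisfying (i) $g = 0$ on $\Gamma_N$, (ii) $\|g\|_1 \le C\|v\|_1$, and (iii) the macroelement quadrature divergence matching $b(g,\mu)_Q = (\dvr v,\mu)$ for all $\mu \in M_h$. Taking $\mu = \mathbb{P}_h w$ in (iii) and using that $\mathbb{P}_h w$ is supported in $\bar M$ would then give $b(g,\mathbb{P}_h w)_Q = (\mathbb{P}_h w,\mathbb{P}_h w) = \|\mathbb{P}_h w\|^2_{\bar M}$, while the norm bound on $g$ follows from (ii) and the continuous estimate. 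For the construction, I would start with a Scott--Zhang interpolant of $v$ to secure (i) and (ii), and then add local corrections supported in neighborhoods of each macroelement $M \in S$ to enforce (iii). Because all elements in such $M$ are parallelograms, $\DF_E$ and $J_E$ are constant on each $E \subset M$, and the quadrature functional reduces to $b(q,1)_{Q,M} = \sum_{E \subset M}(|E|/16)\sum_{j=1}^4 \dvr q(\r_j)$. An extension of Lemma \ref{N_M-lemma} to the macroelement $M$ (already noted in the paragraph preceding the lemma) shows that $N_M$ consists only of constants, so there exists $q_M \in Q^e_h(M)$ with $b(q_M,1)_{Q,M}$ of order $|M|\sim h^2$; rescaling by $h^{-2}$ times the local discrepancy $(\dvr v,1)_M - b(\Pi_h v,1)_{Q,M}$ cancels the quadrature mismatch on each $M$.

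The main obstacle will be arranging the local corrections so that they vanish on $\Gamma_N$ and do not inflate the $H^1$-norm uniformly in $h$. Condition \ref{M1} is crucial here: since each element carries at most one edge on $\Gamma_N$, there are always sufficient edge degrees of freedom in $Q^e_h(M)$ supported away from $\Gamma_N$ to realize the required correction. Because the macroelements in $S$ are disjoint and each correction is localized to $M$ and a thin layer of neighboring elements, the corrections for different macroelements can be summed without cross-interference, yielding an $H^1$-bound on $g$ independent of $h$.
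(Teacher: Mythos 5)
Your overall architecture (a Bogovskii-type right inverse of the divergence followed by a Fortin-type interpolant that matches macroelement moments of the quadrature divergence) is the same as the paper's, but the mechanism you propose for the matching step fails at its central point. You plan to enforce $b(g,\mu)_Q=(\dvr v,\mu)$ for $\mu\in M_h$ by adding corrections $q_M\in Q^e_h(M)$ scaled so that $b(q_M,1)_{Q,M}$ cancels the local discrepancy, asserting that $b(q_M,1)_{Q,M}$ is of order $|M|\sim h^2$. This is impossible: since every element of $M$ is a parallelogram, the constant function $1$ belongs to $N_M$, and by the very definition $N_M=\{w:\,b(q,w)_{Q,M}=0\ \forall q\in Q^e_h(M)\}$ one has $b(q,1)_{Q,M}=0$ for \emph{every} $q\in Q^e_h(M)$. (Concretely, from \eqref{q-tan-1}--\eqref{q-norm-4} with $w\equiv 1$ on a parallelogram, e.g.\ $b(q^t_1,1)_{Q,E}=(y_4-y_1)+(y_2-y_3)=0$ because $\r_3-\r_2=\r_4-\r_1$, and similarly for all other edge basis functions.) The edge degrees of freedom control exactly the complement of the locally constant modes; they can never reach the constant moment on $M$, which is precisely why this lemma is needed as a separate step. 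So your correction has nothing to scale, and the matching condition (iii) cannot be achieved this way.

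The paper's route avoids this by using Stenberg's operator $I_h:H^1(\Omega)\to \tilde Q_h$, where $\tilde Q_h$ is the subspace of element-wise mapped \emph{bilinear} (vertex-based) functions, which satisfies $(\dvr z,\mu)=b(I_hz,\mu)$ for all $\mu\in M_h$ together with $\|I_hz\|_1\le C\|z\|_1$; the constant macroelement modes are reached through interior vertex degrees of freedom, not edge bubbles. The second ingredient you are missing is that no quadrature correction is needed at all: for $q\in\tilde Q_h$ and $\mu\in M_h$ the integrand $\tr(\DF_E^{-T}\hat\nabla\hat q)\,\hat\mu\, J_E$ on $\Eh$ is bilinear, and the trapezoidal rule is exact for bilinears, so $b(I_hz,\mu)_Q=b(I_hz,\mu)$ identically. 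Taking $g=I_hz$ then gives the stated equality directly, with no local corrections, no cross-interference issue between macroelements, and no extra use of \ref{M2}.
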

\begin{proof}
	Let $w\in W_h$ be arbitrary. Since $\mathbb{P}_h w\in L^2(\O)$, there exists $z \in H^1(\Omega)$ such that
	\begin{align*}
	\dvr z = \mathbb{P}_h w \quad 
\mbox{and}\quad \|z\|_1 \leq C\|\mathbb{P}_h w\|_{\bar M}.
	\end{align*}
Following \cite[Lemma 3.3]{stenberg1984analysis}, there exists an operator 
$I_h:H^1(\Omega) \rightarrow \tilde Q_h$ such that
\begin{align*}
(\dvr z,\mu) &=b(I_h z, \mu),\quad \forall \mu \in M_h, \quad 
\mbox{and} \quad \|I_h z\|_1 \leq C\|z\|_1,
\end{align*}
where $\tilde Q_h$ is the subspace of $Q_h$ consisting of element-wise mapped
bilinear functions. We note that the argument in
\cite[Lemma~3.3]{stenberg1984analysis} requires that the interfaces
between macroelements have at least two edges. Recall that our
macroelements consist of all parallelograms sharing a vertex and their
neighbors are also parallelograms. We can therefore choose the subset
$\tilde I_3$ appropriately to satisfy this requirement. Here we also
consider $\Omega \setminus \bar M$ as one macroelement. 

We next note that for $q \in \tilde Q_h$ and $\mu \in M_h$, on any $E \in \Tc_h$, 
$$
b(q, \mu)_{E} = \int_{\hat E} \tr(\DF^{-T}_{E}\hat{\nabla}\hat{q})\hat{\mu}J_E 
\, d\hat\x.
$$
A direct calculation shows that the integrated quantity on $\hat E$ is
bilinear, and hence, using that the quadrature rule is exact for bilinears, 
$b(I_h z, \mu) = b(I_h z, \mu)_Q$. The proof is completed by taking $g = I_h z$. 
\end{proof}

\begin{lemma}\label{lemma-set3}
If \ref{M1} holds, there exists a constant $C$ independent of 
$h$ such that for every $w \in W_h$, 
there exists $q \in Q_h$ such that 
\begin{equation}\label{I3}
b(q,w)_{Q}  \geq C h^2 \sum_{j \in I_3} (w(\r_j))^2, 
\quad \|q\|_1 \leq \|w\|.
\end{equation}
\end{lemma}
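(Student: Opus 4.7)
The plan is to split $w$ on $\bar M$ into its macroelement piecewise-constant part $\mathbb{P}_h w$ and the orthogonal complement $(I-\mathbb{P}_h)w$, produce Stokes velocities controlling each piece via the tools already established, and combine them with a small scalar parameter to absorb the cross interaction.

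First I would invoke \eqref{macro-lemma-2-ineq-global} to get $\tilde q \in Q_h$ satisfying $b(\tilde q,w)_Q \ge C_1 \|(I-\mathbb{P}_h)w\|_{\bar M}^2$ and $\|\tilde q\|_1 \le \|(I-\mathbb{P}_h)w\|_{\bar M}$. In parallel, Lemma~\ref{macro-lemma-3} provides $g \in Q_h$ with $b(g,\mathbb{P}_h w)_Q = \|\mathbb{P}_h w\|_{\bar M}^2$ and $\|g\|_1 \le C_2 \|\mathbb{P}_h w\|_{\bar M}$. Setting $q = \tilde q + \delta g$ for $\delta > 0$ to be chosen and decomposing
\[
b(q,w)_Q = b(\tilde q,w)_Q + \delta \|\mathbb{P}_h w\|_{\bar M}^2 + \delta\, b(g,(I-\mathbb{P}_h)w)_Q,
\]
I would bound the cross term by Cauchy--Schwarz together with the continuity of $b(\cdot,\cdot)_Q$ as $|b(g,(I-\mathbb{P}_h)w)_Q| \le C \|g\|_1 \|(I-\mathbb{P}_h)w\|_{\bar M} \le C C_2 \|\mathbb{P}_h w\|_{\bar M}\|(I-\mathbb{P}_h)w\|_{\bar M}$. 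A Young's inequality with $\delta \sim C_1/C_2^2$ sufficiently small then yields $b(q,w)_Q \ge C(\|(I-\mathbb{P}_h)w\|_{\bar M}^2 + \|\mathbb{P}_h w\|_{\bar M}^2) = C\|w\|_{\bar M}^2$, while the triangle inequality gives $\|q\|_1 \le \|\tilde q\|_1 + \delta \|g\|_1 \le C\|w\|_{\bar M} \le C\|w\|$.

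To pass from $\|w\|_{\bar M}^2$ to the pointwise sum over $I_3$, I would apply the local norm equivalence \eqref{local-norm-equiv} on each element of $\bar M$ to obtain
\[
\|w\|_{\bar M}^2 \ge C h^2 \sum_{\r_k\in\mathrm{vert}(\bar M)} (w(\r_k))^2,
\]
and then verify that every $\r_j$ with $j \in I_3$ is a vertex of $\bar M$. Indeed, taking $\tilde I_3$ to be a maximal edge-independent subset of $I_3$, any $\r_j \in I_3 \setminus \tilde I_3$ must share an edge with some $\r_k \in \tilde I_3$ by maximality, so $\r_j$ is a vertex of $M_k \subset \bar M$; vertices in $\tilde I_3$ are themselves centers of their $M_j \subset \bar M$.

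I expect the main obstacle to be the cross term $\delta\, b(g,(I-\mathbb{P}_h)w)_Q$. Unlike the simplicial construction in \cite{msmfe_simp}, here neither $\tilde q$ nor $g$ vanishes outside its macroelement, so the two contributions cannot be decoupled by support considerations; the remedy is the standard two-term balancing with $\delta$ calibrated against $C_1$ and $C_2$, relying on the quadrature norm equivalence from Lemma~\ref{coercivity-lemma} to ensure the cross term scales as the product of the two components rather than overwhelming either of them.
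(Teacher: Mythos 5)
Your proposal matches the paper's proof essentially step for step: the same splitting $q=\tilde q+\delta g$ using \eqref{macro-lemma-2-ineq-global} and Lemma~\ref{macro-lemma-3}, the same absorption of the cross term $\delta\, b(g,(I-\mathbb{P}_h)w)_Q$ by calibrating $\delta$ against $C_1$ and $C_2$ (the paper takes $\delta=2C_1(1+C_2^2)^{-1}$), and the same passage to the vertex sum via \eqref{local-norm-equiv}. Your closing observation about vertices in $I_3\setminus\tilde I_3$ is the content of the paper's remark preceding the lemma (via \eqref{Ej}), so the argument is correct and essentially identical.
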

\begin{proof}
Let $w \in W_h$ be given, and let $\tilde q \in Q_h,\,g\in Q_h,\, C_1$ and $C_2$ 
be as in \eqref{macro-lemma-2-ineq-global} and Lemma \ref{macro-lemma-3}. 
Set $q = \tilde q + \delta g$, where $\delta = 2C_1(1+C_2^2)^{-1}$. We then have
\begin{align*}
b(q,w)_{Q} &=b(\tilde q,w)_{Q}+\delta b(g,w)_{Q}  
= b(\tilde q,w)_{Q}+\delta b(g,\mathbb{P}_h w)_{Q} 
+ \delta b(g,(I-\mathbb{P}_h) w)_{Q} \\
&\geq C_1\|(I-\mathbb{P}_h)w\|_{\bar M}^2 +\delta \|\mathbb{P}_h w\|_{\bar M}^2
-\delta \|g\|_1\|(I-\mathbb{P}_h)w\|_{\bar M} \geq C_1(1+C_2^2)^{-1}\|w\|_{\bar M}^2,
\end{align*}
and 
$\|q\|_1 \leq \|(I-\mathbb{P}_h)w\|_{\bar M} 
+\delta C_2\|\mathbb{P}_h w\|_{\bar M} \leq C \|w\|_{\bar M}$.
The assertion of the lemma follows from \eqref{local-norm-equiv}. 
\end{proof}

We are now ready to prove the main result stated in Theorem \ref{macro-inf-sup}:
\begin{proof}[Proof of Theorem \ref{macro-inf-sup}] 
The assertion of the theorem follows from Lemma~\ref{lemma-set12}, 
Lemma~\ref{lemma-set3}, and \eqref{global-norm-equiv}.
\end{proof}

We conclude with the solvability result for the MSMFE-1 method \eqref{h-weak-P1-1}-\eqref{h-weak-P1-3}.
\begin{theorem}
Under the assumptions \ref{M1}--\ref{M2}, there exists a unique solution of 
\eqref{h-weak-P1-1}-\eqref{h-weak-P1-3}. 
\end{theorem}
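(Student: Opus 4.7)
The plan is to reduce well-posedness of the square linear system \eqref{h-weak-P1-1}--\eqref{h-weak-P1-3} to uniqueness of its solution, and to obtain uniqueness via the classical Babu\v{s}ka--Brezzi theory applied to the saddle-point form with bilinear forms $(A\cdot,\cdot)_Q$, $(\dvr\cdot,\cdot)$ and $(\cdot,\cdot)_Q$ on $\X_h\times(V_h\times\W_h^1)$. Since the discrete problem is finite-dimensional and square, it suffices to verify the two stability conditions \ref{S3-P1} and \ref{S4-P1}; these together give injectivity of the system matrix, hence invertibility.

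For \ref{S3-P1}, I would argue exactly as in the MSMFE-0 case: by \eqref{prop-piola} and the fact that $\dvr \Xh(\Eh)=\Vh(\Eh)$, the condition $(\dvr\t,v)=0$ for all $v\in V_h$ forces $\dvr\t=0$ pointwise, so $\|\t\|_{\dvr}=\|\t\|$, and then the norm equivalence from Lemma~\ref{coercivity-lemma} yields $(A\t,\t)_Q \ge \alpha_0\|\t\|^2 = \alpha_0\|\t\|_{\dvr}^2$. Note that the additional constraint $(\t,\xi)_Q=0$ is not needed here.

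For \ref{S4-P1}, I would invoke Theorem~\ref{inf-sup-P1-proof}, whose three hypotheses are \eqref{darcy-pair}, \eqref{stokes-pair}, and \eqref{curl-condition}. The Darcy inf-sup \eqref{darcy-pair} and the curl inclusion \eqref{curl-condition} have already been verified for our spaces in Lemma~\ref{aux-lemma}. The Stokes inf-sup with quadrature \eqref{stokes-pair} is exactly the content of Theorem~\ref{macro-inf-sup}, and this is where assumptions \ref{M1}--\ref{M2} enter the argument (through the macroelement construction leading to Lemmas~\ref{macro-lemma-2} and \ref{macro-lemma-3}). Combining these three ingredients, Theorem~\ref{inf-sup-P1-proof} yields \ref{S4-P1}.

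The main obstacle of the overall well-posedness argument is really \eqref{stokes-pair}, but this has been handled in the preceding subsection; at the level of the present theorem the work is purely organizational, assembling the coercivity on the kernel and the inf-sup condition to apply standard mixed finite element theory. Since the system is square, existence is then automatic from uniqueness and the proof is complete.
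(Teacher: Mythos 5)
Your proposal is correct and follows essentially the same route as the paper: coercivity \ref{S3-P1} from Lemma~\ref{coercivity-lemma} together with the fact that $(\dvr\t,v)=0$ for all $v\in V_h$ forces $\dvr\t=0$, and the inf-sup condition \ref{S4-P1} from combining Theorem~\ref{inf-sup-P1-proof}, Lemma~\ref{aux-lemma}, and Theorem~\ref{macro-inf-sup}. The paper's own proof is exactly this assembly, so no further comment is needed.
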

\begin{proof}
The existence and uniqueness of a solution to 
\eqref{h-weak-P1-1}-\eqref{h-weak-P1-3} follows from 
\ref{S3-P1} and \ref{S4-P1}. Lemma~\ref{coercivity-lemma} implies the coercivity 
condition \ref{S3-P1}. 
Assuming \ref{M1}--\ref{M2}, the inf-sup condition \ref{S4-P1} follows from a 
combination of 
Theorem~\ref{inf-sup-P1-proof}, Lemma~\ref{aux-lemma}, and 
Theorem~\ref{macro-inf-sup}.
\end{proof}

\subsection{Reduction to a cell-centered displacement system of the MSMFE-1 method}
The algebraic system that arises from
\eqref{h-weak-P1-1}--\eqref{h-weak-P1-3} is of the form
\eqref{sp-matrix}, where the matrix $\Ag$ is different from the one in
the MSMFE-0 method, due the the quadrature rule, i.e., $(\Ag)_{ij} =
(\tau_j,\xi_i)_Q$. As in the MSMFE-0 method, the quadrature rule in
$(A\sigma_h,\tau)_Q$ in \eqref{h-weak-P1-1} localizes the basis
functions interaction around vertices, so the matrix $A_{\s\s}$ is
block diagonal with $2k\times 2k$ blocks, where $k$ is the number of
elements that share a vertex.  The stress can be eliminated, resulting
in the displacement-rotation system \eqref{msmfe0-system}. The matrix
in \eqref{msmfe0-system} is symmetric and positive definite, due to
\eqref{spd-matrix} and the inf-sup condition \ref{S4-P1}.

Furthermore, the quadrature rule in the stress-rotation bilinear forms
$(\g_h,\tau)_Q$ and $(\sigma_h,\xi)_Q$ also localizes the interaction
around vertices, since there is one rotation basis function associated
with each vertex. Therefore the matrix $\Ag$ is block-diagonal with
$1\times 2k$ blocks, resulting in a diagonal rotation 
matrix $\Ag\As^{-1}\Ag^T$. As a result, the rotation $\gamma_h$ can be 
trivially eliminated from \eqref{msmfe0-system}, leading to the cell-centered
displacement system
\begin{align}
\left( A_{\s u}A^{-1}_{\s\s}A^T_{\s u} - A_{\s u} A^{-1}_{\s\s}A^T_{\s\g}(A_{\s\g}A^{-1}_{\s\s}A^T_{\s\g})^{-1}A_{\s\g}A^{-1}_{\s\s}A^T_{\s u} \right) u = \hat{f} \label{disp-syst}.
\end{align}
The above matrix is symmetric and positive definite, since it is a
Schur complement of the symmetric and positive definite matrix in 
\eqref{msmfe0-system}, see \cite[Theorem 7.7.6]{Horn-Johnson}.


\section{Error estimates}
In this section we establish optimal convergence for all variables, as
well as the superconvergence for the displacement. We start by
providing several results that will be used in the analysis.

\subsection{Preliminaries}
For the rest of the paper we assume that the quadrilateral elements
are $O(h^2)$-perturbations of parallelograms known as $h^2$-parallelograms. 
In particular, with the notation from Figure~\ref{elements}, we assume that
\begin{align}\label{h2-parall}
\|\r_{34}-\r_{21}\| \leq Ch^2.
\end{align}
Elements of this type are obtained by uniform refinements of a general
quadrilateral grid or if the mesh is obtained by a smooth map. This is
a standard assumption for the symmetric multipoint flux approximation
method \cite{wheeler2006multipoint}, required due to the reduced
approximation properties of the $\BDM_1$ space on general
quadrilaterals \cite{arnold2005quadrilateral}. If \eqref{h2-parall} holds,
it is easy to check that 
\begin{align}
|\DF_E|_{1,\infty,\Eh} \leq Ch^2 \quad \text{and} \quad \left|\frac{1}{J_E}\DF_E\right|_{j,\infty,\Eh}\leq Ch^{j-1},\, j=1,2. \label{scaling-of-mapping-2}
\end{align}

In the analysis we will utilize several projection operators. It is
known \cite{brezzi1985two, brezzi1991mixed, wang1994mixed} that there
exists a projection operator $\Pi: \X \cap H^1(\Omega,\M) \to \X_h$ such that
\begin{align}
    (\dvr (\Pi\t -\t), v) = 0,  \quad \forall \, v \in V_h. 
\label{bdm-interpolant1}
\end{align}
The operator $\Pi$ is defined locally on an element $E$ by 
\begin{equation}\label{Pi-Piola}
\Pi \t \overset{\Pc}{\leftrightarrow} \hat\Pi\hat\t,
\end{equation}
where $\hat\Pi$ is a reference element interpolant. 
We will also utilize the lowest order Raviart-Thomas space 
\cite{raviart1977mixed, brezzi1991mixed}:
$\Xh^{\RT}(\Eh) = \begin{pmatrix} \alpha_1 + \beta_1\xh \\ 
\alpha_2 + \beta_2\yh \end{pmatrix} \times 
\begin{pmatrix} \alpha_3 + \beta_3\xh \\ 
\alpha_4 + \beta_4\yh \end{pmatrix}$.  
The degrees of freedom of $\Xh^{\RT}(\Eh)$ are the values of the normal
components at the midpoints of the edges. A projection operator 
$\Pi^{\RT}$ onto $\X^{\RT}_h$ similar to \eqref{bdm-interpolant1} exists 
\cite{raviart1977mixed, brezzi1991mixed}, which satisfies for any edge $e$,
\begin{equation}\label{Pi-RT-orth}
\langle (\Pi^{\RT}\tau - \tau) n_e, \chi n_e\rangle_e = 0, 
\quad \forall \, \chi \in \X^{\RT}_h.
\end{equation}
It is also easy to see that $\Pi^{\RT}$ satisfies
\begin{equation}
    \dvr \t = \dvr \Pi^{\RT} \t, \quad \forall \t \in \X_h 
\label{rt-operator1}
\end{equation}
and
\begin{equation}
    \|\Pi^{\RT}\t\| \le C \|\t\|, \quad \forall \t \in \X_h.  \label{rt-operator2}
\end{equation}
Let $Q^u_h$ be a projection operator
onto $V_h$ satisfying for any $v \in L^2(\Omega,\R^2)$, 
\begin{equation}
(\hat Q^u \hat v - \hat v, \hat w)_{\hat E} = 0, \quad \forall \hat w \in \hat V(\hat E),
\qquad  Q^u_h v = \hat Q^u \hat v \circ F_E^{-1} \,\, \forall E \in \Tc_h.
\end{equation}
It follows from \eqref{prop-piola} that 
\begin{equation}\label{disp-proj}
(Q^u_h v - v, \dvr \tau) = 0, \quad \forall \, \tau \in \X_h.
\end{equation}
Let $Q^{\g}_h$ be the $L^2$-orthogonal 
projection operator onto $\W_h$
satisfying for any $\xi \in L^2(\Omega,\N)$, 
\begin{equation}
(Q^{\g}_h \xi - \xi, \zeta) = 0, \qquad \forall \zeta \in \W_h^k.
\end{equation}
The next lemma summarizes the well-known approximation properties of the above
operators.
\begin{lemma}
There exists a constant $C$ independent of $h$ such that
\begin{align}
& \|v - Q^u_h  v\| \le C\|v\|_r h^r, && \forall v\in H^r(\O,\R^2),
&& 0\le r\le 1, \label{approx-1}\\
& \|\xi - Q^{\g}_h \xi\| \le C\|\xi\|_r h^r, && \forall \xi \in H^r(\O,\N),
&& 0\le r\le 1, \label{approx-2} \\
& \|\t - \Pi\t\| \le C \|\t\|_r h^r, && \forall \t\in H^r(\O,\M),\, 
&& 1\le r\le 2, \label{approx-3}\\
& \|\t - \Pi^{\RT}\t\| \le C \|\t\|_r h^r, && \forall \t\in H^{r}(\O,\M), 
&& 0\le r\le 1, \label{approx-4} \\
& \|\dvr(\t - \Pi\t) \|+\| \dvr(\t - \Pi^{\RT}\t) \| \le C \|\dvr\t\|_r h^r, 
&& \forall \t\in H^{r+1}(\O,\M), && 0\le r\le 1. \label{approx-5}
\end{align}
\end{lemma}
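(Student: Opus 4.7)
The plan is to derive each estimate element-by-element by mapping to the reference square $\hat E$, invoking the polynomial preservation of the appropriate reference interpolant together with the Bramble--Hilbert lemma on $\hat E$, and then scaling back using \eqref{scaling-of-mapping} and \eqref{scaling-of-mapping-2}. The $h^2$-parallelogram assumption \eqref{h2-parall} enters only through \eqref{scaling-of-mapping-2}, but it is essential for \eqref{approx-3}--\eqref{approx-5}.

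The scalar bounds \eqref{approx-1}--\eqref{approx-2} are the easiest. Since $\hat Q^u$ is the $L^2$-projection onto $(\Qc_0)^2$ on $\hat E$ and preserves constants, the Bramble--Hilbert lemma yields $\|\hat v - \hat Q^u \hat v\|_{\hat E}\le C|\hat v|_{r,\hat E}$ for $0\le r\le 1$. The scaling $\|v\|_E^2 = \int_{\hat E}|\hat v|^2 J_E\,d\hat\x$ together with \eqref{scaling-of-mapping} transfers this to the physical element with the correct power of $h$, and summation over $\Tc_h$ gives \eqref{approx-1}. The estimate \eqref{approx-2} is handled identically, as $Q^\g_h$ is also an $L^2$-projection onto a mapped constant space.

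For \eqref{approx-3}--\eqref{approx-4} I would repeat the same recipe, but through the Piola transformation \eqref{Pi-Piola}. On $\hat E$, $\hat\Pi$ preserves each row's $(\Pc_1)^2$ component and $\hat\Pi^{\RT}$ preserves $(\Qc_0)^2$ row-wise, so the reference-element error is bounded by $|\hat\tau|_{r,\hat E}$ on the appropriate range of $r$. Translating back requires relating $\|\tau - \Pi\tau\|_E$ to $\|\hat\tau - \hat\Pi\hat\tau\|_{\hat E}$ via the Piola map $\tau^T = \DF_E \hat\tau^T/J_E$, and similarly relating the Sobolev seminorms of $\hat\tau$ to those of $\tau$. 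This is the \textbf{main obstacle}: on a truly general quadrilateral, $|\DF_E|_{1,\infty,\hat E}$ is $O(h)$, which would cause a loss of one power of $h$ and degrade \eqref{approx-3} below $h^2$. The $h^2$-parallelogram assumption enters precisely here, via \eqref{scaling-of-mapping-2}, which gives $|\DF_E|_{1,\infty,\hat E}\le Ch^2$ and controls the higher derivatives of $\DF_E/J_E$ at the rate $h^{j-1}$. With this in hand, the standard chain-rule bookkeeping recovers the optimal orders in \eqref{approx-3} and \eqref{approx-4}; this is the same mechanism that is used for the symmetric MFMFE and MPFA analyses.

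Finally, \eqref{approx-5} reduces to a scalar approximation estimate. The commuting property \eqref{bdm-interpolant1} asserts that $\dvr(\Pi\t - \t)$ is $L^2$-orthogonal to $V_h$, so $\|\dvr(\Pi\t - \t)\|$ equals the distance of $\dvr\t$ from $\dvr\X_h$ in $L^2$, which is bounded by the $L^2$-projection error of $\dvr\t$ onto a piecewise mapped-constant space. Applying \eqref{approx-1} (and its natural extension to the divergence space) to $\dvr\t$ yields the bound $C\|\dvr\t\|_r h^r$ for $0\le r\le 1$. The same argument, using \eqref{Pi-RT-orth} in place of \eqref{bdm-interpolant1}, handles $\Pi^{\RT}$. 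Summing over $\Tc_h$ and combining the two pieces gives \eqref{approx-5}, completing the plan.
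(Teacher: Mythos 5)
The paper does not actually prove this lemma: it disposes of \eqref{approx-1}--\eqref{approx-2} by citing \cite{ciarlet2002finite} and of \eqref{approx-3}--\eqref{approx-5} by citing \cite{arnold2005quadrilateral,wang1994mixed}. Your sketch reconstructs the standard reference-element/Bramble--Hilbert/scaling arguments that underlie those citations, so in spirit you are on the intended route, and \eqref{approx-1}--\eqref{approx-2} are handled correctly. Two points, however, need correction. First, a minor one: the $h^2$-parallelogram assumption is \emph{not} needed for \eqref{approx-4}; as the paper remarks immediately after the lemma, \eqref{approx-4} holds on general quadrilaterals for $0\le r\le 1$, and it is only the higher-order statements \eqref{approx-3} with $r=2$ and \eqref{approx-5} with $r=1$ that require \eqref{h2-parall}. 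Relatedly, for \eqref{approx-3} the obstruction on general quadrilaterals is not merely chain-rule bookkeeping with $|\DF_E|_{1,\infty,\Eh}$: as shown in \cite{arnold2005quadrilateral}, the Piola image of $\hat\X(\Eh)$ fails to contain all of $\Pc_1$ on a genuinely bilinear element, so second-order accuracy is structurally unavailable; under \eqref{h2-parall} the map is an $O(h^2)$ perturbation of an affine one and the perturbation argument you describe does recover the optimal rate.

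The genuine gap is in your argument for \eqref{approx-5}. You assert that the orthogonality \eqref{bdm-interpolant1} makes $\|\dvr(\Pi\t-\t)\|$ equal to the distance of $\dvr\t$ from $\dvr\X_h$, and then reduce to the $L^2$-projection error onto piecewise constants via \eqref{approx-1}. This does not follow: on a quadrilateral, $\dvr\X_h|_E$ consists of $J_E^{-1}$-weighted constants, which is \emph{not} the test space $V_h$ of plain constants, so orthogonality of $\dvr(\Pi\t-\t)$ to $V_h$ does not make $\dvr\Pi\t$ a best approximation from $\dvr\X_h$, nor does it identify $\dvr\Pi\t$ with $Q^u_h\dvr\t$. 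The correct splitting is $\|\dvr(\t-\Pi\t)\|_E\le\|\dvr\t-Q^u_h\dvr\t\|_E+\|Q^u_h\dvr\t-\dvr\Pi\t\|_E$; the first term is the projection error you invoke, while the second is a difference of a constant and a $J_E^{-1}$-weighted constant with equal means on $E$, and is therefore controlled by the relative oscillation of $J_E$ on $E$. It is exactly here that \eqref{h2-parall} (equivalently \eqref{scaling-of-mapping-2}) enters, giving an $O(h)$ relative oscillation and hence the $r=1$ rate; on general quadrilaterals this term is only $O(1)$ relative, which is why the paper notes that \eqref{approx-5} then holds only with $r=0$. The same repair applies to the $\Pi^{\RT}$ half of \eqref{approx-5}. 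With this second term accounted for, your plan goes through.
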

\begin{proof}
Estimates \eqref{approx-1} and \eqref{approx-2} can be found in
\cite{ciarlet2002finite}. Estimates
\eqref{approx-3}--\eqref{approx-5} are proved in
\cite{arnold2005quadrilateral,wang1994mixed}.
\end{proof}
	
We note that on general quadrilaterals, \eqref{approx-1},
\eqref{approx-2} and \eqref{approx-4} are also valid, while
\eqref{approx-3} and \eqref{approx-5} hold only with $r=1$ and $r=0$,
respectively.

\begin{corollary}
There exists a constant $C$ independent of $h$ such that for all $E \in \Tc_h$,
\begin{align}
& \|\Pi \t\|_{j,E} \leq C \|\t\|_{j,E}, && \forall \t \in H^j(E,\M), 
\quad j=1,2, \label{h1-continuity-bdm} \\
& \|\Pi^{\RT} \t\|_{1,E} \leq C \|\t\|_{1,E}, && \forall \t \in H^1(E,\M), 
\label{h1-continuity-rt} \\
& \|Q^{\g}_h \xi\|_{j,E} \leq C \|\xi\|_{j,E}, && \forall \xi \in H^1(E,\N), 
\quad j=1,2.
\label{h1-continuity-l2}
\end{align}
\end{corollary}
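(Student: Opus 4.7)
The plan is to prove each estimate by passing to the reference element $\Eh$, establishing boundedness of the corresponding reference-element projector in the $H^j(\Eh)$-norm, and then transporting the inequality back to $E$ via the Piola transformation (for $\Pi$ and $\Pi^{\RT}$) or ordinary composition (for $Q^{\g}_h$), while carefully tracking derivatives of $F_E$ under the $h^2$-parallelogram assumption \eqref{h2-parall}.

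First, on the fixed reference element $\Eh$ the three reference projectors $\PIh$, $\hat\Pi^{\RT}$, and $\hat Q^{\g}$ all map into finite-dimensional spaces defined by a bounded number of linear functionals (normal trace values at vertices for $\PIh$, normal trace values at edge midpoints for $\hat\Pi^{\RT}$, and $L^2$ inner products against polynomials for $\hat Q^{\g}$). Each functional is continuous on $H^j(\Eh)$ by the trace theorem and Sobolev embedding, and the target spaces are finite-dimensional, so norm equivalence on those spaces yields $\|\PIh\th\|_{j,\Eh}\leq C\|\th\|_{j,\Eh}$ for $j=1,2$, and analogous bounds for $\hat\Pi^{\RT}$ with $j=1$ and $\hat Q^{\g}$ with $j=1,2$.

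Next, I would transport these bounds back to $E$. For $\Pi$, using \eqref{maps} and \eqref{Pi-Piola}, I would apply the chain rule to $(\Pi\t)^T=(1/J_E)\DF_E\,\PIh\th^T\circ F_E^{-1}$; derivatives of $\Pi\t$ on $E$ then split into derivatives of $\PIh\th$ on $\Eh$ pulled back via $\DF_E^{-1}$, plus lower-order terms involving derivatives of the Piola factor $(1/J_E)\DF_E$. The scalings \eqref{scaling-of-mapping} give $\|\DF_E^{-1}\|_{0,\infty,\Eh}\sim h^{-1}$ and $\|J_E\|_{0,\infty,\Eh}\sim h^2$, and \eqref{scaling-of-mapping-2} gives $|\DF_E|_{1,\infty,\Eh}\leq Ch^2$ together with $|(1/J_E)\DF_E|_{j,\infty,\Eh}\leq Ch^{j-1}$, $j=1,2$. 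Combining the reference-element estimate with the change-of-variables formula and these bounds, the powers of $h$ cancel to yield $|\Pi\t|_{j,E}\leq C\|\t\|_{j,E}$ for $j=1,2$. The case $j=2$ works because $F_E$ is bilinear, so $\hat\partial^2 F_E$ is the single constant vector $\r_{34}-\r_{21}$, which is $O(h^2)$ by \eqref{h2-parall}, keeping all second-derivative terms of the Piola factor controlled. Together with the $L^2$ bound $\|\Pi\t\|_{0,E}\le C\|\t\|_{0,E}$, this gives \eqref{h1-continuity-bdm}. The argument for $\Pi^{\RT}$ at $j=1$ is identical and yields \eqref{h1-continuity-rt}. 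For $Q^{\g}_h$, since $\xi=\hat\xi\circ F_E^{-1}$ is a plain composition, the chain rule yields only factors of $\DF_E^{-1}$ of order $h^{-1}$ balanced against the Jacobian factor $h^2$ under the integral; the analogous argument, using the bilinearity of $F_E$ and \eqref{h2-parall} for $j=2$, delivers \eqref{h1-continuity-l2}.

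The main obstacle is the $j=2$ case for $\Pi$: the Piola factor $(1/J_E)\DF_E$ and its first two derivatives all have to be controlled simultaneously, and the $O(h^2)$ smallness of the non-parallelogram perturbation $\r_{34}-\r_{21}$ from \eqref{h2-parall} is precisely what prevents these derivatives from spoiling the bound. Without \eqref{h2-parall} this $H^2$-stability would fail, consistent with the reduced approximation properties of $\BDM_1$ on general quadrilaterals documented in \cite{arnold2005quadrilateral}; the careful bookkeeping of scaling factors under the bilinear map is therefore the essential technical content.
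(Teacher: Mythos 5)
Your overall strategy (prove boundedness of the reference projectors and pull the estimate back through the map) is a legitimate alternative to the paper's one-line argument, which instead combines the approximation properties \eqref{approx-2}--\eqref{approx-4} with an inverse inequality on the physical element (writing, e.g., $|\Pi\t|_{1,E}=|\Pi(\t-\bar\t)|_{1,E}\le Ch^{-1}\|\Pi(\t-\bar\t)\|_{0,E}$ for a constant $\bar\t$ reproduced by $\Pi$, and then invoking \eqref{approx-3}). However, as written your scaling step contains a genuine gap: the claim that ``the powers of $h$ cancel'' is false if all you carry to the reference element is the full-norm bound $\|\PIh\th\|_{j,\Eh}\le C\|\th\|_{j,\Eh}$. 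Tracking the Piola transform carefully gives $|\Pi\t|_{1,E}^2\le C\bigl(\|\PIh\th\|_{0,\Eh}^2+h^{-2}|\PIh\th|_{1,\Eh}^2\bigr)$, while the reverse scalings \eqref{scaling-1} give $\|\th\|_{0,\Eh}\le C\|\t\|_{0,E}$ with \emph{no} positive power of $h$ and only $|\th|_{1,\Eh}\le Ch\|\t\|_{1,E}$. Feeding the full-norm reference bound into the $h^{-2}$ term therefore leaves an uncancelled $h^{-2}\|\t\|_{0,E}^2$. The missing ingredient is that $\PIh$ reproduces the (mapped) constants, resp.\ low-order polynomials --- cf.\ \eqref{Pi-const} --- so that by Bramble--Hilbert one may replace $\|\th\|_{j,\Eh}$ on the right by the \emph{seminorm} $|\th|_{j,\Eh}$, which does carry the extra factor $h^{j}$ needed to absorb $h^{-2j}$. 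This polynomial-invariance-plus-seminorm reduction is precisely the cancellation mechanism that the paper's inverse-inequality argument delivers automatically, and without it your route does not close (for $j=2$ the same device, with a linear subtracted, is needed in addition to \eqref{h2-parall}).

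A secondary but real error: for $j=1$ the degrees of freedom you invoke --- normal trace values at the vertices --- are \emph{not} continuous functionals on $H^1(\Eh,\M)$, since traces of $H^1$ functions lie only in $H^{1/2}$ of an edge, which does not embed in $C^0$. The projector $\Pi$ on $\X\cap H^1$ is defined through the edge moments $\langle\th\,\nh,\hat p\rangle_{\eh}$, $\hat p\in\Pc_1(\eh)^2$, and it is these that are continuous by the trace theorem $H^1(\Eh)\to L^2(\partial\Eh)$; the vertex values are merely a convenient unisolvent set on the polynomial space itself. The reference-element boundedness you assert is true, but not for the reason you give. The analogous remarks apply to your treatment of $\Pi^{\RT}$ and of $Q^{\g}_h$ (whose $H^j$-stability likewise needs the seminorm reduction or the inverse inequality, not just $L^2$-orthogonality).
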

\begin{proof}
The proof follows from the approximation properties 
\eqref{approx-2}--\eqref{approx-4} and the use of the inverse inequality, 
see e.g., \cite[Lemma~5.1]{msmfe_simp}.
\end{proof}
We remind the reader that stress tensors are mapped from the
reference element via the Piola transformation, while displacements
and rotations are mapped using standard change of variables, see \eqref{maps}. 
The following results can be found in
\cite{wheeler2006multipoint}, where 
$\t \overset{\Pc}{\leftrightarrow} \hat{\t}$:
\begin{align}
& |\th|_{j,\Eh} \le Ch^j\|\t\|_{j,E}, \quad j\geq0,
\label{scaling-1} \\
& \inp[\th - \hat{\Pi}^{\RT}\th]{\ch_0}_{\hat{Q}, \Eh} = 0 \quad 
\mbox{for all constant tensors $\ch_0$}, \label{lemma-orth-quadrature}\\
& |(A\Pi\s, \t - \Pi^{\RT} \t)_{Q,E} | \le C h\|\s\|_{1,E}\|\t\|_{E}
\quad \forall \, \t \in \X_h. \label{lemma-orth-quad}
\end{align}
Also, for $\xi \leftrightarrow \hat{\xi}$, using standard change of variables,
\begin{equation}\label{standard-cov}
|\hat\xi|_{j,\Eh} \le Ch^{j-1}\|\xi\|_{j,E}, 
\quad |\hat\xi|_{j,\infty,\Eh} \le Ch^{j}\|\xi\|_{j,\infty,E},
\quad j\geq0.
\end{equation}
For $\tau, \chi \in \X_h$, $\xi \in \W^1_h$,
denote the element quadrature errors by
\begin{align*}
    \tet(A\tau,\chi)_E \equiv (A\tau,\chi)_E - (A\tau,\chi)_{Q,E}, \quad
    \del(\tau,\xi)_E \equiv (\tau,\xi)_E - (\tau,\xi)_{Q,E},
\end{align*}
and define the global quadrature errors by $\theta(A\tau,\chi)|_E =
\tet(A\tau,\chi)_E$, $\del(\tau,\xi)|_E = \del(\tau,\xi)_E$. Similarly
denote the quadrature errors on the reference element by
$\teth(\cdot,\cdot)$ and $\delh(\cdot,\cdot)$.

Denote $A \in W^{j,\infty}_{\Tc_h}$ if
$A \in W^{j,\infty}(E) \, \forall E \in \Tc_h$ and $\|A\|_{j,\infty,E}$ is uniformly bounded independently of $h$.
\begin{lemma}\label{lemma-theta-bound}
If $A \in W^{1,\infty}_{\Tc_h}$, there exists a constant C independent of $h$ 
such that $\forall \, \t \in \X_h$, $\chi \in \X_h^{\RT}$,
	\begin{equation}
	| \theta(A\t,\chi) | \le C \sum_{E\in\Tc_h} h\|A\|_{1,\infty,E}\|\t\|_{1,E}\|\chi\|_{E}. \label{theta-bound-1}
	\end{equation}
Moreover, there exist a constant C independent of $h$ such that for 
all $\t \in \X^{\RT}_h$ and  $\xi \in \W^1_h$,
	\begin{align}
	| \delta(\t,\xi) | \le C \sum_{E\in\Tc_h} h\|\t\|_{1,E}\|\xi\|_{E}, \label{theta-bound-2} 	\\
	| \delta(\t,\xi) | \le C \sum_{E\in\Tc_h} h\|\t\|_{E}\|\xi\|_{1,E}. \label{theta-bound-3}
	\end{align}
\end{lemma}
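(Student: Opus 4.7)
The plan is to transform each elementwise quadrature error to the reference square $\Eh$, exploit the fact that the tensor-product trapezoidal rule is exact on $\Qc_1(\Eh)$, and apply a Bramble-Hilbert estimate together with the scaling bounds \eqref{scaling-of-mapping-2} that encode the near-affinity of $F_E$ for $h^2$-parallelograms.

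For the first bound \eqref{theta-bound-1}, applying the Piola transformation gives
\begin{equation*}
\theta(A\t,\chi)_E = \int_{\Eh} \hat{g} : \hat{f}\, d\hat{\x} - \hat{Q}_{\Eh}(\hat{g} : \hat{f}), \qquad
\hat{g} = \Ah\, \hat{\t}\, \tfrac{1}{J_E}\DF_E^T, \qquad
\hat{f} = \hat{\chi}\, \DF_E^T.
\end{equation*}
Since $\hat{Q}$ integrates $\Qc_1(\Eh)$ exactly, a Bramble-Hilbert argument yields $|\theta(A\t,\chi)_E| \le C\,|\hat{g}:\hat{f}|_{W^{2,1}(\Eh)}$. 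Expanding the second derivatives via Leibniz, we bound each term using: (i) the entries of $\hat{\chi}\in\hat{\X}^{\RT}(\Eh)$ are linear in a single reference variable, so $|\hat{\chi}|_{2,\infty,\Eh}=0$; (ii) $\DF_E$ is linear in $\hat{\x}$, hence $|\DF_E|_{2,\infty,\Eh}=0$, while $|\DF_E|_{1,\infty,\Eh}\le Ch^2$ by the $h^2$-parallelogram assumption \eqref{h2-parall}; (iii) $|\Ah|_{j,\infty,\Eh}\le Ch^j\|A\|_{j,\infty,E}$ by the chain rule; (iv) $\bigl|\tfrac{1}{J_E}\DF_E\bigr|_{j,\infty,\Eh}\le Ch^{j-1}$ by \eqref{scaling-of-mapping-2}; (v) $|\hat{\t}|_{j,\Eh}\le Ch^j\|\t\|_{j,E}$ by \eqref{scaling-1}. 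Combining these, every surviving Leibniz term carries an additional $h$ factor beyond the naive $O(1)$ estimate, and the bound $Ch\|A\|_{1,\infty,E}\|\t\|_{1,E}\|\chi\|_E$ follows; summing over $E$ yields \eqref{theta-bound-1}.

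For \eqref{theta-bound-2} and \eqref{theta-bound-3}, the analysis is analogous. Using \eqref{def},
\begin{equation*}
\delta(\t,\xi)_E = \int_{\Eh} \hat{\t}\,\DF_E^T : \hat{\xi}\, d\hat{\x} - \hat{Q}_{\Eh}(\hat{\t}\,\DF_E^T : \hat{\xi}),
\end{equation*}
where now $\hat{\t}\in\hat{\X}^{\RT}(\Eh)$ has entries linear in a single variable and $\hat{\xi}\in\hat{\W}^1(\Eh)$ is bilinear, i.e., in $\Qc_1(\Eh)$. A Bramble-Hilbert estimate together with Leibniz again reduces the bound to manipulating $|\DF_E|_{1,\infty,\Eh}\le Ch^2$ combined with the polynomial structure of $\hat{\t}$ and $\hat{\xi}$, yielding the desired $O(h)$ factor on $E$. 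The two variants \eqref{theta-bound-2} and \eqref{theta-bound-3} correspond to where the extra derivative is placed when applying H\"older's inequality inside the Leibniz expansion: distributing it on $\hat{\t}$ gives \eqref{theta-bound-2}, while distributing it on $\hat{\xi}$ gives \eqref{theta-bound-3}.

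The main obstacle is the careful combinatorial bookkeeping of Leibniz terms and verifying that every nonzero contribution either benefits from the $Ch^2$ factor in $|\DF_E|_{1,\infty,\Eh}$ or is eliminated by the fact that $\hat{\chi}$ (respectively $\hat{\xi}$) and $\DF_E$ have vanishing second-order partials on $\Eh$. Once all such cancellations are identified and the scaling bounds are applied together, the mapping back to $E$ produces exactly the claimed single factor of $h$; this is the technical heart of the argument.
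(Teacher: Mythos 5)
Your overall strategy --- map to $\Eh$, use exactness of the trapezoidal rule, and apply Bramble--Hilbert --- is the right family of ideas, but applying Bramble--Hilbert \emph{directly to the whole product} creates two concrete problems that your bookkeeping does not resolve. First, for \eqref{theta-bound-1} the lemma only assumes $A \in W^{1,\infty}_{\Tc_h}$, so the integrand $\hat g:\hat f$ containing the factor $\Ah$ is in general only $W^{1,\infty}(\Eh)$ and the bound $|\theta(A\t,\chi)_E|\le C\,|\hat g:\hat f|_{W^{2,1}(\Eh)}$ is not available: the Leibniz expansion you invoke contains the term where both derivatives fall on $\Ah$, which would require $\|A\|_{2,\infty,E}$. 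The paper avoids this by the four-term telescoping decomposition in \eqref{theta-bound}: the $A$-dependence is peeled off first as $\Ah-\overline{\Ah}$ and estimated with a \emph{first-order} bound $\|\Ah-\overline{\Ah}\|_{0,\infty,\Eh}\le C|\Ah|_{1,\infty,\Eh}$, and the second-order exactness argument is applied only to the last term $I_4$, whose non-polynomial content is $\th$ alone.

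Second, for \eqref{theta-bound-2}--\eqref{theta-bound-3} the generic $\Pc_1$-based Bramble--Hilbert seminorm $|\cdot|_{W^{2,1}(\Eh)}$ contains the mixed derivative $\partial_{\xh}\partial_{\yh}$, and for $\hat\xi\in\Qc_1(\Eh)$ this derivative does \emph{not} vanish (it is the coefficient of $\xh\yh$). The corresponding Leibniz term scales as $\|\th\|_{\Eh}\,\|\DF_E\|_{0,\infty,\Eh}\,|\hat\xi|_{2,\Eh}\sim \|\t\|_E\cdot h\cdot h^{-1}\|\xi\|_E=\|\t\|_E\|\xi\|_E$, with no gain of $h$, so the claimed bound does not follow from the full $W^{2,1}$ seminorm. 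To salvage the direct route you would need the sharper Peano-kernel representation of the tensor-product trapezoidal error (which involves only the pure derivatives $\partial_{\xh}^2$, $\partial_{\yh}^2$ and the fourth-order mixed term, all of which vanish here), i.e., genuinely use $\Qc_1$-exactness rather than $\Pc_1$-exactness; the paper instead splits $\DF_E^T$ into $\overline{\DF_E^T}+(\DF_E^T-\overline{\DF_E^T})$ and applies a first-order Bramble--Hilbert argument to one factor at a time, for which constant-times-$\Qc_1$ exactness suffices. There are also two smaller omissions: the terms with two derivatives on $\th$ and with one derivative on $\ch$ must be converted back to $h\|\t\|_{1,E}$ and $\|\chi\|_E$ via inverse/norm-equivalence estimates on the reference finite element spaces, which you do not mention. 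Each gap is repairable, but as written the proof does not go through under the stated hypotheses.
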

\begin{proof}
For a function $\varphi$ defined on $\Eh$, let $\bar\varphi$ be its mean value.
We have
\begin{align}
\theta_E(A\t,\chi) & = 
\hat{\theta}_{\Eh}(\Ah \th \frac{1}{J_E} \DF^T_E, \ch \DF^T_E)
\nonumber \\
& = \hat{\theta}_{\Eh}((\Ah - \bar\Ah) \th \frac{1}{J_E} \DF^T_E, \ch \DF^T_E)
+ \hat{\theta}_{\Eh}(\bar\Ah \th (\frac{1}{J_E} \DF^T_E 
- \overline{\frac{1}{J_E} \DF^T_E}) , \ch \DF^T_E) 
\nonumber \\
& \quad + \hat{\theta}_{\Eh}(\bar\Ah \th \overline{\frac{1}{J_E} \DF^T_E} , 
\ch (\DF^T_E - \overline{\DF^T_E})) 
+ \hat{\theta}_{\Eh}(\bar\Ah \th \overline{\frac{1}{J_E} \DF^T_E} , 
\ch \overline{\DF^T_E}) \equiv \sum_{k=1}^4 I_k.
\label{theta-bound}
\end{align}
Using the Bramble-Hilbert lemma \cite{ciarlet2002finite}, 
\eqref{scaling-of-mapping}, \eqref{scaling-1}, and \eqref{standard-cov},
we bound the first term on the right in \eqref{theta-bound} as follows:
\begin{align}
|I_1| \leq C|\Ah|_{1,\infty, \Eh} \|\th\|_{\Eh}\|\ch\|_{\Eh} \leq Ch\|A\|_{1,\infty,E}\|\t\|_{E}\|\chi\|_{E}. \label{I1-bound}
\end{align} 
Similarly, using \eqref{scaling-of-mapping}, 
\eqref{scaling-of-mapping-2}, \eqref{scaling-1}, and \eqref{standard-cov},
\begin{equation}\label{bound-I2-I3}
|I_2| + |I_3| \le C h \|\Ah\|_{0,\infty, \Eh} \|\th\|_{\Eh}\|\ch\|_{\Eh} 
\leq Ch\|A\|_{0,\infty,E}\|\t\|_{E}\|\chi\|_{E}.
\end{equation}
To bound $I_4$, recall that the trapezoidal quadrature rule is exact for bilinear
functions. Since $\ch \in \Xh^{\RT}(\Eh)$ is linear,
$I_4 = 0$ for any constant tensor
$\th$. Using
the Bramble-Hilbert lemma, \eqref{scaling-of-mapping}, \eqref{scaling-1},  
and \eqref{standard-cov}, we have
\begin{align}
|I_4| \leq C \|\Ah\|_{0,\infty,\Eh} |\th|_{1,\Eh}\|\ch\|_{\Eh} 
\leq Ch \|A\|_{0,\infty, E}\|\t\|_{1,E}\|\chi\|_E.
\label{bound-I4}
\end{align}
Combining \eqref{theta-bound}--\eqref{bound-I4}
and summing over the elements implies 
\eqref{theta-bound-1}. Similarly, using the exactness of the 
quadrature rule for bilinears, the Bramble-Hilbert lemma,
\eqref{scaling-of-mapping}, 
\eqref{scaling-of-mapping-2}, \eqref{scaling-1}, and \eqref{standard-cov},
we have
\begin{align*}	
|\delta_E(\t,\xi)| & = |\delh(\th\DF_E^T,\hat{\xi})| 
\le |\delh(\th(\DF_E^T - \overline{\DF_E^T}),\hat{\xi})| 
+ |\delh(\th\overline{\DF_E^T},\hat{\xi})|\\
&\leq C\left(|\DF_E|_{1,\infty,\Eh}\|\th\|_{\Eh}\|\hat{\xi}\|_{\Eh} 
+ \|\DF_E\|_{0,\infty,\Eh}|\th|_{1,\Eh}\|\hat{\xi}\|_{\Eh}\right)
\leq Ch\|\t\|_{1,E}\|\xi\|_E,
\end{align*}
which implies \eqref{theta-bound-2}. Bound \eqref{theta-bound-3}
follows in a similar way.
\end{proof}

\begin{lemma} \label{orth-rot-quad}
There exists a constant C independent of $h$ such that for all $\t \in \X_h$
and $\xi \in \W_h^1$,
\begin{equation}
|(\t-\Pi^{\RT}\t, \xi)_Q| \leq Ch\|\t\|\|\xi\|_1.
\end{equation}
\end{lemma}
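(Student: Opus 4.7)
The plan is to pull the integral back to the reference element $\Eh$, use the orthogonality \eqref{lemma-orth-quadrature} to subtract a constant tensor, and then exploit the $h^2$-parallelogram bound \eqref{scaling-of-mapping-2} on $\DF_E$. First, I would rewrite the element contribution as a bilinear form on $\Eh$. Using the Piola mapping together with the identity $(\th\DF_E^T):\hat\xi = \th:(\hat\xi\DF_E)$ (componentwise), the definition \eqref{def} gives
\[
(\t-\Pi^{\RT}\t,\xi)_{Q,E} = (\th-\hat{\Pi}^{\RT}\th,\; \hat\xi\,\DF_E)_{\hat Q,\Eh}.
\]
Let $T := \hat\xi\,\DF_E$ and let $T_0$ be the constant tensor $T(\rh_1)$. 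Since $T_0$ is $\xh$-independent, the quadrature orthogonality \eqref{lemma-orth-quadrature} applied row by row yields $(\th-\hat{\Pi}^{\RT}\th,T_0)_{\hat Q,\Eh}=0$, so
\[
(\t-\Pi^{\RT}\t,\xi)_{Q,E} = (\th-\hat{\Pi}^{\RT}\th,\; T-T_0)_{\hat Q,\Eh}.
\]

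Discrete Cauchy--Schwarz on the four quadrature points produces
\[
|(\t-\Pi^{\RT}\t,\xi)_{Q,E}| \le \|\th-\hat{\Pi}^{\RT}\th\|_{\hat Q,\Eh}\;\|T-T_0\|_{\hat Q,\Eh}.
\]
For the first factor I would note that $\th$ and $\hat{\Pi}^{\RT}\th$ lie in a fixed reference-element polynomial space on which the quadrature seminorm is equivalent to $\|\cdot\|_{\Eh}$ (a short unisolvence check at the four corners for $\hat{\X}(\Eh)+\hat{\X}^{\RT}(\Eh)$), and then combine this with the reference-element stability of $\hat{\Pi}^{\RT}$ and the Piola scaling $\|\th\|_{\Eh}\le C\|\t\|_E$ from \eqref{scaling-1} to obtain $\|\th-\hat{\Pi}^{\RT}\th\|_{\hat Q,\Eh}\le C\|\t\|_E$. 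For the second factor I would use $\|T-T_0\|_{\hat Q,\Eh}\le C|T|_{1,\infty,\Eh}$ (since $T-T_0$ vanishes at $\rh_1$ and is evaluated at only four points) together with the product rule
\[
|T|_{1,\infty,\Eh} \le \|\DF_E\|_{0,\infty,\Eh}\,|\hat\xi|_{1,\infty,\Eh} + |\DF_E|_{1,\infty,\Eh}\,\|\hat\xi\|_{0,\infty,\Eh},
\]
then plug in \eqref{scaling-of-mapping}, the $h^2$-parallelogram estimate \eqref{scaling-of-mapping-2} (giving $|\DF_E|_{1,\infty,\Eh}\le Ch^2$), the $L^\infty$ change-of-variable bounds in \eqref{standard-cov}, and the standard $L^\infty$ inverse inequality for piecewise bilinears on $E$ (which costs one power of $h^{-1}$). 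Combining these yields $|T|_{1,\infty,\Eh}\le Ch\|\xi\|_{1,E}$, hence $\|T-T_0\|_{\hat Q,\Eh}\le Ch\|\xi\|_{1,E}$.

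Multiplying the two factor bounds gives $|(\t-\Pi^{\RT}\t,\xi)_{Q,E}|\le Ch\|\t\|_E\|\xi\|_{1,E}$, and a discrete Cauchy--Schwarz over elements concludes the proof. The delicate point is the second factor: applying \eqref{standard-cov} and the inverse inequality to $\hat\xi$ effectively costs one power of $h$, and it is only the extra $h$ coming from \eqref{scaling-of-mapping-2}, i.e.\ the $h^2$-parallelogram assumption \eqref{h2-parall}, that prevents the estimate from degenerating to $O(1)$; on a general quadrilateral where $|\DF_E|_{1,\infty,\Eh}$ is merely $O(h)$ the argument would fail.
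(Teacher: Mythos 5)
Your proposal is correct and follows exactly the route the paper sketches in its one-line proof: pull back to $\Eh$, use the orthogonality \eqref{lemma-orth-quadrature} to subtract a constant tensor, and estimate the remainder via the scalings \eqref{scaling-of-mapping}, \eqref{scaling-of-mapping-2}, \eqref{scaling-1}, and \eqref{standard-cov}. Your closing observation that the extra power of $h$ hinges on the $h^2$-parallelogram bound $|\DF_E|_{1,\infty,\Eh}\le Ch^2$ correctly identifies where the standing assumption \eqref{h2-parall} enters, a point the paper leaves implicit.
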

\begin{proof}
The proof follows from mapping to the reference element and using \eqref{lemma-orth-quadrature}. 
\end{proof}

\subsection{First order convergence for all variables}
The convergence analysis presented below is different from the one on
simplices from \cite{msmfe_simp}. In particular, since the quadrature
error bounds \eqref{theta-bound-1}--\eqref{theta-bound-3} require that
one of the arguments is in $\X^{\RT}_h$, rather than $\X_h$, the error
equations need to be manipulated in a special way.

\begin{theorem}
Let $A\in W^{1,\infty}_{\Tc_h}$.
For the solution $(\s,u,\g)$ of \eqref{weak-1}--\eqref{weak-3} 
and its numerical approximation
$(\s_h,u_h,\g_h)$ obtained by either the MSMFE-0 method 
\eqref{h-weak-P0-1}--\eqref{h-weak-P0-3}
or the MSMFE-1 method \eqref{h-weak-P1-1}--\eqref{h-weak-P1-3}, there exists a
constant $C$ independent of $h$ such that
\begin{align}
\|\s-\s_h\|_{\dvr}+\| u -u_h \|+\| \g-\g_h\| \leq Ch(\|\s\|_1 + \|\dvrg \s\|_1+\|u\|_1 + \|\g\|_1). \label{msmfe-error} 		
\end{align}
\end{theorem}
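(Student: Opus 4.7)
The plan is to follow the standard Galerkin mixed-method error analysis: derive error equations, decompose the error through the projections $\Pi$, $Q^u_h$, $Q^\gamma_h$, and close the estimates with the stability conditions of Sections~3--4 together with the quadrature bounds of Lemma~\ref{lemma-theta-bound} and Lemma~\ref{orth-rot-quad}. Subtracting the discrete formulation from \eqref{weak-1}--\eqref{weak-3} yields, for all $\tau\in\X_h$, an error equation of the form $(A\phi_\sigma,\tau)_Q + (\phi_u,\dvr\tau) + (\phi_\gamma,\tau) = R(\tau)$ for MSMFE-0 (with $(\phi_\gamma,\tau)_Q$ replacing the last term for MSMFE-1), where $\phi_\sigma=\Pi\sigma-\sigma_h$, $\phi_u=Q^u_h u - u_h$, $\phi_\gamma=Q^\gamma_h\gamma-\gamma_h$, and $R(\tau)$ collects projection errors, the quadrature defects $\theta(A\sigma_h,\tau)$ and (for MSMFE-1) $\delta(\gamma_h,\tau)$, and the boundary remainder $\langle g-\Pc_0 g,\tau n\rangle_{\Gd}$. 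The commuting property \eqref{bdm-interpolant1} combined with the discrete divergence equation forces $\dvr\phi_\sigma=0$, so $\|\dvr(\sigma-\sigma_h)\|\le Ch\|\dvr\sigma\|_1$ is immediate from \eqref{approx-5}.

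The stability mechanism is two-pronged. First, the coercivity \ref{S1-P0}/\ref{S3-P1} applied with $\tau=\phi_\sigma$---which lies in the relevant kernel thanks to $\dvr\phi_\sigma=0$ and the discrete weak symmetry conditions \eqref{h-weak-P0-3}/\eqref{h-weak-P1-3}---gives $\alpha_0\|\phi_\sigma\|^2\le|R(\phi_\sigma)|$ after the couplings with $\phi_u,\phi_\gamma$ cancel by orthogonality. Second, the inf-sup conditions \ref{S2-P0}/\ref{S4-P1} supply, for any prescribed pair $(\phi_u,\phi_\gamma)$, a test $\tau\in\X_h$ with $\|\tau\|_{\dvr}\le C(\|\phi_u\|+\|\phi_\gamma\|)$ whose insertion in the error equation converts the desired bound on $\|\phi_u\|+\|\phi_\gamma\|$ into a bound by $\|\phi_\sigma\|$ plus the residuals in $R(\tau)$.

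The main obstacle is estimating the quadrature terms in $R(\tau)$ when $\tau\in\X_h$, because Lemma~\ref{lemma-theta-bound} requires one argument in $\X^{\RT}_h$. I would overcome this by decomposing $\tau=\Pi^{\RT}\tau+(\tau-\Pi^{\RT}\tau)$ while simultaneously substituting $\sigma_h=\Pi\sigma-\phi_\sigma$ and $\gamma_h=Q^\gamma_h\gamma-\phi_\gamma$. On the $\Pi^{\RT}\tau$ piece, Lemma~\ref{lemma-theta-bound} applies directly, with \eqref{rt-operator2} giving $\|\Pi^{\RT}\tau\|\le C\|\tau\|$ and \eqref{h1-continuity-bdm} absorbing $\|\Pi\sigma\|_1$ into $\|\sigma\|_1$, producing a contribution of order $h\|\sigma\|_1\|\tau\|_{\dvr}$ plus a harmless $h\|\phi_\sigma\|\|\tau\|_{\dvr}$ term. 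On the residual $\tau-\Pi^{\RT}\tau$ piece I invoke the vertex-wise orthogonality \eqref{lemma-orth-quadrature} and the bound \eqref{lemma-orth-quad}; the $\delta$-terms appearing in MSMFE-1 are handled in the same spirit using \eqref{theta-bound-2}--\eqref{theta-bound-3} and Lemma~\ref{orth-rot-quad}. The boundary remainder is $O(h\|\sigma\|_1\|\tau\|_{\dvr})$ by a standard edge-wise Bramble-Hilbert argument exploiting the definition \eqref{rhs-proj} of $\Pc_0$.

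Finally, combining the coercivity and inf-sup bounds, absorbing cross-terms of the form $\|\phi_\sigma\|(\|\phi_u\|+\|\phi_\gamma\|)$ via Young's inequality, and applying the triangle inequality together with the approximation estimates \eqref{approx-1}--\eqref{approx-5} to $\sigma-\Pi\sigma$, $u-Q^u_h u$ and $\gamma-Q^\gamma_h\gamma$ yields \eqref{msmfe-error}. The argument is parallel for MSMFE-0 and MSMFE-1, the only substantive difference being the additional $\delta$-contribution in $R(\tau)$ for MSMFE-1, which is absorbed exactly as described.
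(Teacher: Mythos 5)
Your overall architecture --- error equations in the projected variables, $\dvr(\Pi\s-\s_h)=0$, coercivity for the stress, inf-sup for $(\phi_u,\phi_\gamma)$, and the decomposition $\t=\Pi^{\RT}\t+(\t-\Pi^{\RT}\t)$ to make the quadrature lemmas applicable --- is exactly the paper's. However, two specific steps as you describe them would fail. First, $\phi_\sigma=\Pi\s-\s_h$ does \emph{not} lie in the kernel of the weak-symmetry constraint, and the coupling term $(\phi_\gamma,\phi_\sigma)$ (or $(\phi_\gamma,\phi_\sigma)_Q$) does \emph{not} cancel by orthogonality: the discrete condition \eqref{h-weak-P0-3}/\eqref{h-weak-P1-3} controls $\s_h$, not $\Pi\s$, so $(\Pi\s-\s_h,\xi)_Q=(\Pi\s-\s,\xi)_Q+(\s,\xi)-(\s_h,\xi)_Q+\dots$ is only \emph{small}, not zero. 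The paper derives \eqref{error-P1-eq9-2}, giving $(\Pi\s-\s_h,\xi)_Q\le Ch^2\|\s\|_1^2+\epsilon\|\xi\|^2$ with a \emph{small} $\epsilon$ in front of $\|\xi\|^2$; this is essential, because the leftover $\epsilon\|\g_h-Q_h^\g\g\|^2$ must then be absorbed against the inf-sup bound \eqref{error-P1-eq11}, which itself bounds $\|\g_h-Q_h^\g\g\|$ by $\|\Pi\s-\s_h\|$ plus $O(h)$ terms. A generic Young-inequality treatment of a cross term $\|\phi_\sigma\|\|\phi_\gamma\|$ (which is all you get from Lemma~\ref{coercivity-lemma} without using the third error equation) produces a constant, not a small $\epsilon$, and the near-circular system of inequalities then does not close.

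Second, the boundary remainder. You propose to estimate $\langle g-\Pc_0 g,\t\,n\rangle_{\Gd}$ directly by an edge-wise Bramble--Hilbert argument, but note that $\|\t\,n\|_{\Gd}\sim h^{-1/2}\|\t\|$ for $\t\in\X_h$, so a direct estimate either loses half an order or introduces a norm of $g$ on the right-hand side --- and no such term appears in \eqref{msmfe-error}. The paper instead uses \eqref{rhs-proj} and \eqref{Pi-RT-orth} to rewrite the boundary term as $\langle g,(\t-\Pi^{\RT}\t)n\rangle_{\Gd}$ and then cancels it \emph{exactly} against $-(A\s,\t-\Pi^{\RT}\t)-(\g,\t-\Pi^{\RT}\t)$ by testing the continuous equation \eqref{weak-1} with $\t-\Pi^{\RT}\t$ and using $\dvr(\t-\Pi^{\RT}\t)=0$; this is the identity \eqref{zero-terms}. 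This cancellation is the reason the final estimate is both optimal and free of $g$; your treatment of the remaining terms (via \eqref{theta-bound-1}--\eqref{theta-bound-3}, \eqref{lemma-orth-quad}, and Lemma~\ref{orth-rot-quad}) otherwise matches the paper.
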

\begin{proof}
We present the argument for the MSMFE-1 method, which includes the
proof for the MSMFE-0 method.
We form the error system by subtracting the MSMFE-1 method 
\eqref{h-weak-P1-1}--\eqref{h-weak-P1-3} from \eqref{weak-1}--\eqref{weak-3}:
\begin{align}
(A\s ,\t)-(A \s_h,\t)_Q + (u-u_h,\dvr\t) + (\g, \t)-(\g_h,\t)_Q 
& = \gnp[g-\Pc_0 g]{\t n}_{\Gamma_D},
&& \t \in \X_h, \label{P1-error-eq-1}\\
(\dvr(\s - \s_h), v) &= 0, && v \in V_h, \label{P1-error-eq-2}\\
(\s,\xi)-(\s_h, \xi)_Q &= 0, && \xi \in \W^1_h. \label{P1-error-eq-3}
\end{align}
Using \eqref{disp-proj}, \eqref{Pi-RT-orth}, and \eqref{rhs-proj},  
we rewrite the first error equation as 
\begin{align}\label{error-P1-eq1}
& (A(\Pi\s- \s_h),\t)_Q + (Q^u_h  u-u_h,\dvr\t) \nonumber \\
& \qquad = -(A\s ,\t) + (A\Pi\s ,\t)_Q
- (\g, \t)+(\g_h,\t)_Q + \gnp[g]{(\t-\Pi^{\RT}\t) n}_{\Gamma_D}.
\end{align}
For the first two terms on the right above we write
\begin{align}\label{error-P1-I1-eq1}
& -(A\s ,\t) + (A\Pi\s ,\t)_Q =
- (A\s,\t - \Pi^{\RT}\t)  
- (A(\s - \Pi\s),\Pi^{\RT}\t) \nonumber \\ 
& \qquad\quad - (A\Pi\s,\Pi^{\RT}\t) 
+ (A\Pi\s,\Pi^{\RT}\t)_Q 
+ (A\Pi\s,\t - \Pi^{\RT}\t)_Q.
\end{align}
The second two terms on the right in \eqref{error-P1-eq1} can be rewritten as
\begin{align}\label{error-P1-I2-eq1}
& - (\g, \t)+(\g_h,\t)_Q = 
-(\g,\t-\Pi^{\RT}\t) -(\g-Q^{\g}_h\g,\Pi^{\RT}\t) \nonumber \\
& \qquad - (\Pi^{\RT}\t,Q^{\g}_h\g) + (\Pi^{\RT}\t,Q^{\g}_h\g)_Q
+(Q^{\g}_h\g, \t-\Pi^{\RT}\t)_Q +(\g_h-Q^{\g}_h\g,\t)_Q.
\end{align}
Combining the first terms in \eqref{error-P1-I1-eq1} and 
\eqref{error-P1-I2-eq1} with the last term in \eqref{error-P1-eq1} gives
\begin{equation}\label{zero-terms}
- (A\s,\t - \Pi^{\RT}\t) -(\g,\t-\Pi^{\RT}\t) 
+ \gnp[g]{(\t-\Pi^{\RT}\t) n}_{\Gamma_D} = 0,
\end{equation}
which follows from testing \eqref{weak-1} with $ \t - \Pi^{\RT}\t$ and 
using \eqref{rt-operator1}. The rest of the terms in \eqref{error-P1-I1-eq1} and 
\eqref{error-P1-I2-eq1} are bounded as follows. Using \eqref{approx-3}
and \eqref{rt-operator2}, we have
\begin{align}
|(A(\s - \Pi\s),\Pi^{\RT}\t)| \leq Ch\|\s\|_1\|\t\| 
\leq Ch^2\|\s\|^2_1 + \epsilon \|\t\|^2. \label{error-P1-I1-eq4}
\end{align}
For the third and fourth terms on the right in \eqref{error-P1-I1-eq1},
using \eqref{theta-bound-1}, 
\eqref{h1-continuity-bdm} and \eqref{h1-continuity-rt}, we obtain
	\begin{align}
	|\theta(A\Pi\s,\Pi^{\RT} \t)| \leq Ch\|\s\|_1\|\t\| \leq Ch^2\|\s\|^2_1+ \epsilon \|\t\|^2.\label{error-P1-I1-eq2}
	\end{align}
Using \eqref{lemma-orth-quad}, we write
	\begin{align}
	|(A\Pi\s,\t - \Pi^{\RT}\t)_Q| \leq Ch\|\s\|_1\|\t\| \leq Ch^2\|\s\|^2_1 + \epsilon\|\t\|^2. \label{error-P1-I1-eq3}
	\end{align} 
We next bound the terms on the right in \eqref{error-P1-I2-eq1}.
Due to \eqref{approx-2} and \eqref{rt-operator2}, we have
	\begin{align}
	 |(\g-Q^{\g}_h \g,\Pi^{\RT} \t)| \leq Ch\|\g\|_1\|\t\| 
\leq Ch^2\|\g\|^2_1 + \epsilon\|\t\|^2. \label{error-P1-eq4}
	\end{align}
Using \eqref{theta-bound-3}, \eqref{rt-operator2}, and \eqref{h1-continuity-l2},
we have 
\begin{align}
	|\delta(\Pi^{\RT} \t,Q^{\g}_h \g)|  \leq Ch\|\t\|\| \g\|_{1}\leq Ch^2\|\g\|^2_1 +\epsilon\|\t\|^2.  \label{error-P1-eq5}
	\end{align}
Using Lemma \ref{orth-rot-quad}, we obtain
	\begin{align}
	|( Q^{\g}_h \g,\t- \Pi^{\RT}\t)_Q| \leq Ch\|\g\|_1\|\t\| \leq Ch^2\|\g\|^2_1 +\epsilon\|\t\|^2.  \label{error-P1-eq6}
	\end{align}
Combining \eqref{error-P1-eq1}--\eqref{error-P1-eq6}, we obtain
	\begin{align}
	(A(\Pi\s- \s_h),\t)_Q +& (Q^u_h  u-u_h,\dvr\t)  \leq Ch^2(\|\s\|_1^2+\|\g\|^2_1) + \epsilon\|\t\|^2 +(\g_h-Q^{\g}_h\g,\t)_Q. \label{error-P1-eq8}
	\end{align}
We next note that, using \eqref{bdm-interpolant1}, the second error 
equation \eqref{P1-error-eq-2} implies that 
\begin{align}
\dvr(\Pi\s - \s_h) = 0. \label{error-P1-eq9-1}
\end{align}
The third error equation \eqref{P1-error-eq-3} implies
\begin{align}
(\Pi\s-\s_h,\xi)_Q & = (\Pi\s-\s,\xi)_Q
+ (\s - \Pi^{\RT}\s,\xi)_Q
-\delta(\Pi^{\RT}\s,\xi) + (\Pi^{\RT}\s - \s,\xi) \nonumber \\
& \le C h^2\|\s\|_1^2 + \epsilon\|\xi\|^2, \label{error-P1-eq9-2}
\end{align}
using \eqref{approx-3}, \eqref{approx-4}, \eqref{theta-bound-2}, and
\eqref{h1-continuity-rt} for the inequality. We now set $\t = \Pi\s -
\s_h$ in \eqref{error-P1-eq8}, $\xi = \g_h-Q^{\g}_h\g$ in 
\eqref{error-P1-eq9-2}, use
\eqref{coercivity} and \eqref{error-P1-eq9-1}, and take $\epsilon$ small enough 
to obtain
\begin{align}
\|\Pi\s - \s_h\|^2 \leq Ch^2(\|\s\|_1^2+\|\g\|^2_1)  +\epsilon\|\g_h - Q^{\g}_h \g\|^2. \label{error-P1-eq10}
\end{align}
We apply the inf-sup condition \eqref{inf-sup-P1} 
to $(Q^u_h u-u_h,Q^{\g}_h\g-\g_h) \in V_h\times \W^1_h$ and use 
\eqref{P1-error-eq-1} to obtain
\begin{align}
\|Q^u_h u -u_h\| + \|Q^{\g}_h \g-\g_h\| & \leq C\sup\limits_{\t\in \X_h}
\frac{
-(A\s,\t)+(A\s_h,\t)_Q-(\g,\t)+(Q^{\g}_h\g,\t)_Q
+ \langle g-\Pc_0g,\t n\rangle_{\Gamma_D}}{\|\t\|_{\dvr}} \nonumber \\
& \le C( h\|\s\|_1 + h\|\g\|_1 + \|\Pi\s - \s_h\|), 
\label{error-P1-eq11}
\end{align}
where the numerator terms have been bounded in a manner similar to the bounds
for the terms in the error equation \eqref{P1-error-eq-1} presented above.
Next, we combine a sufficiently small multiple of 
\eqref{error-P1-eq11} with \eqref{error-P1-eq10}, and choose 
$\epsilon$ in \eqref{error-P1-eq10} small enough to get
\begin{align}
	 \|\Pi\s-\s_h\| + \|Q^u_h u -u_h \| + \|Q^{\g}_h \g-\g_h\| 
\leq Ch(\|\s\|_1+\|\g\|_1). \label{error-P1-final1}
\end{align}
The assertion of the theorem follows from \eqref{error-P1-final1},
\eqref{error-P1-eq9-1}, \eqref{approx-1}--\eqref{approx-3}, and
\eqref{approx-5}. The proof for the MSMFE-0 method can obtained by
omitting the quadrature error terms $\delta(\cdot,\cdot)$.
\end{proof}

\subsection{Second order convergence for the displacement}
We next present superconvergence analysis for the displacement using a duality
argument. We need the following improved bounds on the quadrature errors.
\begin{lemma}
If $A\in W^{2,\infty}_{\Tc_h}$, there exists a constant $C$
independent of $h$ such that for all $\t \in \X_h$ and $\chi \in
\X^{\RT}_h$
\begin{align}
|\theta(A\t,\chi)| \leq C\sum_{E\in \Tc_h} h^2 \|\t\|_{2,E}\|\chi\|_{1,E}. 
\label{h2-theta}
\end{align}
For all $\chi \in \X_h^{\RT},\,\xi\in \W^1_h$ there exists a constant $C$ 
independent of $h$ such that
\begin{align}
|\delta(\chi,\xi)| \leq C\sum_{E\in \Tc_h} h^2 \|\chi\|_{1,E}\|\xi\|_{2,E}.	
\label{h2-delta}
\end{align}
\end{lemma}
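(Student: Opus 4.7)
The plan is to proceed element-by-element, map each quadrature error to the reference element $\Eh$, invoke a Bramble--Hilbert argument to extract a second-order seminorm, and then control that seminorm with the Leibniz rule. The $h^2$-perturbation hypothesis \eqref{h2-parall} is what upgrades the $O(h)$ bounds of Lemma~\ref{lemma-theta-bound} to $O(h^2)$.

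For \eqref{h2-theta}, I would write $\theta_E(A\t,\chi) = \hat\theta_{\Eh}(\hat f)$ with
$$\hat f = \Bigl(\hat A\,\hat\t\,\tfrac{1}{J_E}\DF_E^T\Bigr):\bigl(\hat\chi\,\DF_E^T\bigr).$$
Since the tensor-product trapezoidal rule on $\Eh$ annihilates $\Pc_1$ and $H^2(\Eh)\hookrightarrow C(\Eh)$, the Bramble--Hilbert lemma gives $|\hat\theta_{\Eh}(\hat f)| \le C|\hat f|_{2,\Eh}$. I would then expand $\partial^\alpha \hat f$ for $|\alpha|=2$ by Leibniz, keeping $\hat\t$ in $L^2(\Eh)$ and the remaining four factors in $L^\infty(\Eh)$. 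The scalings needed are $\|\hat A\|_{j,\infty,\Eh}\le Ch^j$ (since $A\in W^{2,\infty}_{\Tc_h}$), $|\hat\t|_{j,\Eh}\le Ch^j\|\t\|_{j,E}$ from \eqref{scaling-1}, the bounds $\|\tfrac{1}{J_E}\DF_E\|_{j,\infty,\Eh}\le Ch^{j-1}$ coming from \eqref{scaling-of-mapping} and \eqref{scaling-of-mapping-2}, and, crucially, the $h^2$-parallelogram estimate $|\DF_E|_{1,\infty,\Eh}\le Ch^2$ also from \eqref{scaling-of-mapping-2}; for $\hat\chi$, equivalence of norms on the finite-dimensional polynomial space on the fixed reference element gives $\|\hat\chi\|_{L^\infty(\Eh)}\le C\|\chi\|_E$ and $|\hat\chi|_{1,\infty,\Eh}\le Ch\|\chi\|_{1,E}$ (using \eqref{scaling-1} at order one). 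The essential structural simplification is that $\hat\chi\in\Xh^{\RT}(\Eh)$ and $\DF_E$ are both affine on $\Eh$, so every derivative distribution in which either receives a second derivative vanishes. Going through the remaining derivative distributions, each surviving term is bounded by $Ch^2\|\t\|_{2,E}\|\chi\|_{1,E}$: the $h^{-1}$ carried by $\tfrac{1}{J_E}\DF_E$ is always absorbed either by the adjacent $\DF_E$ factor or by one of its derivatives via \eqref{scaling-of-mapping-2}, and the remaining $h^2$ comes either from two derivatives of factors with $h^j$ scaling or from a single derivative hitting $\DF_E$. Summation over elements yields \eqref{h2-theta}.

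The estimate \eqref{h2-delta} is analogous but shorter. Writing $\delta_E(\chi,\xi)=\hat\delta_{\Eh}(\hat g)$ with $\hat g=(\hat\chi\DF_E^T):\hat\xi$, Bramble--Hilbert gives $|\hat\delta_{\Eh}(\hat g)|\le C|\hat g|_{2,\Eh}$. Now $\hat\chi$ and $\DF_E$ are affine, so only $\hat\xi$ may carry a second derivative. Combining \eqref{standard-cov} (which provides $|\hat\xi|_{j,\Eh}\le Ch^{j-1}\|\xi\|_{j,E}$) with the bounds listed above for $\hat\chi$ and $\DF_E$, a direct check of the four nonzero derivative distributions produces $|\hat g|_{2,\Eh}\le Ch^2\|\chi\|_{1,E}\|\xi\|_{2,E}$ on each element, and summation gives \eqref{h2-delta}.

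The only real difficulty is bookkeeping. One must enumerate the derivative distributions in the Leibniz expansion and verify that the $h^2$-parallelogram hypothesis (which upgrades $|\DF_E|_{1,\infty,\Eh}$ from $O(h)$ to $O(h^2)$) together with the affineness of $\hat\chi$, $\hat\xi$-quadratic, and $\DF_E$ forces every surviving contribution to be $O(h^2)$. No single step is hard once these two observations are in hand.
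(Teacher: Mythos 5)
Your argument is correct, but it is not the route the paper takes: the paper disposes of \eqref{h2-theta} by citing \cite[Lemma~4.2]{wheeler2006multipoint}, whose proof rests on the Peano kernel theorem together with the exactness of the tensor-product trapezoidal rule on $\Qc_1(\Eh)$, and then declares \eqref{h2-delta} ``similar.'' You instead give a self-contained Bramble--Hilbert argument on $\Eh$, using only exactness on $\Pc_1$ and the embedding $H^2(\Eh)\hookrightarrow C(\Eh)$, followed by a Leibniz expansion of the $H^2(\Eh)$-seminorm of the mapped integrand. The two routes trade the same currency: the Peano kernel representation exposes only the derivatives that the quadrature error can actually see (fewer terms to inspect, at the cost of an external reference), whereas your route must verify that \emph{every} distribution of two derivatives over the five factors $\hat A$, $\hat\t$, $\tfrac{1}{J_E}\DF_E^T$, $\hat\chi$, $\DF_E^T$ --- including the mixed derivative $\partial_{\hat x}\partial_{\hat y}$, which $\Pc_1$-exactness does not eliminate --- gains a factor $h^2$. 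Your bookkeeping does close: each single derivative gains one power of $h$ on each factor, second derivatives of $\hat\chi$ and $\DF_E^T$ vanish by affineness, and the one place where the gain is not automatic, namely $|\DF_E|_{1,\infty,\Eh}$ (only $O(h)$ on a general quadrilateral), is exactly where you correctly invoke the $h^2$-parallelogram bound \eqref{scaling-of-mapping-2}; the analogous check for \eqref{h2-delta}, where only $\hat\xi$ can absorb a second derivative via $|\hat\xi|_{2,\Eh}\le Ch\|\xi\|_{2,E}$ from \eqref{standard-cov}, is also right. One small point worth making explicit: the $h^2$-perturbation is also needed for $|\hat A|_{2,\infty,\Eh}\le Ch^2\|A\|_{2,\infty,E}$, since the chain rule produces a term $|A|_{1,\infty,E}\,|\DF_E|_{1,\infty,\Eh}$ that would only be $O(h)$ on a general quadrilateral. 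In sum, both proofs hinge on the same two facts --- exactness of the quadrature for (bi)linears and \eqref{h2-parall} --- and the difference is expository: yours keeps the paper self-contained at the price of more bookkeeping, while the paper's is a one-line citation.
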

\begin{proof}
The proof of \eqref{h2-theta} is given in
\cite[Lemma~4.2]{wheeler2006multipoint}. It uses the Piano kernel theorem
\cite[Theorem 5.2-3]{stroud1971approximate} and the fact that the quadrature 
rule is exact for bilinear functions. The proof of \eqref{h2-delta} is similar.
\end{proof}

We consider the auxiliary elasticity problem: find $\phi$ and $\psi$ such that
\begin{align}
\begin{aligned}
& \psi = A^{-1}\epsilon(\phi), \quad \dvrg \psi = (Q^u_h  u - u_h)  
\quad \mbox{in } \O, \\
& \phi = 0 \mbox{ on } \Gd, \quad \psi\,n = 0 \mbox{ on } \Gn. \label{aux}
\end{aligned}
\end{align}
We assume that the above problem is $H^2$-elliptic regular, 
see \cite{grisvard2011elliptic} for sufficient conditions:
\begin{align}
\|\phi\|_2 \le C \|Q^u_h  u - u_h\| \label{elliptic-reg}.
\end{align}

\begin{theorem}
If $A\in W^{2,\infty}_{\Tc_h}$ and \eqref{elliptic-reg} holds, 
there exists a constant $C$ independent of $h$ such that
\begin{align}
\|Q^u_h  u- u_h \| \leq Ch^2\left(\|\s\|_2 +\|\g\|_2\right). \label{superconv}
	\end{align}
\end{theorem}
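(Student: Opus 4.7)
The plan is to apply an Aubin--Nitsche duality argument based on the auxiliary problem \eqref{aux}. The starting identity is
\begin{equation*}
\|Q^u_h u - u_h\|^2 = (Q^u_h u - u_h, \dvrg \psi) = (Q^u_h u - u_h, \dvrg \Pi\psi),
\end{equation*}
the second equality holding by \eqref{bdm-interpolant1} since $Q^u_h u - u_h \in V_h$. I would then test the first error equation \eqref{P1-error-eq-1} with $\t = \Pi\psi$ and repeat the decomposition that led to \eqref{error-P1-eq8} in the first-order proof, using \eqref{error-P1-I1-eq1}, \eqref{error-P1-I2-eq1}, and the cancellation \eqref{zero-terms}, to arrive at
\begin{equation*}
\|Q^u_h u - u_h\|^2 = -(A(\Pi\s - \s_h), \Pi\psi)_Q + R(\Pi\psi),
\end{equation*}
where $R(\Pi\psi)$ gathers the approximation and quadrature residuals tested against $\Pi\psi$.

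For the leading term I would exploit the symmetry of $A$ together with $A\psi = \epsilon(\phi) = \nabla\phi - \g_\phi$, where $\g_\phi = \skew(\nabla\phi)$, writing
\begin{equation*}
(A(\Pi\s - \s_h), \Pi\psi)_Q = (\Pi\s - \s_h, A\psi) + (A(\Pi\s - \s_h), \Pi\psi - \psi) + \theta(A(\Pi\s - \s_h), \Pi\psi).
\end{equation*}
The middle piece is $O(h^2)$, by the first-order bound \eqref{error-P1-final1} combined with $\|\Pi\psi - \psi\| \leq Ch\|\phi\|_2$. The first piece equals $(\Pi\s - \s_h, \nabla\phi) - (\Pi\s - \s_h, \g_\phi)$; integration by parts, together with $\dvrg(\Pi\s - \s_h) = 0$ from \eqref{error-P1-eq9-1} and the boundary conditions on $\phi$ and $\s$, kills the gradient term, while the rotation term is handled by adding and subtracting $Q^{\g}_h\g_\phi$ and invoking the third error equation \eqref{P1-error-eq-3}, producing a contribution of order $h^2(\|\s\|_2 + \|\g\|_2)\|\phi\|_2$ after using the approximation properties of $Q^{\g}_h$ and $\|\g_\phi\|_1 \leq C\|\phi\|_2$. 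For the $\theta$ piece I would split $\Pi\psi = \Pi^{\RT}\Pi\psi + (\Pi\psi - \Pi^{\RT}\Pi\psi)$ and apply the improved quadrature bound \eqref{h2-theta} on the $\X_h^{\RT}$ part.

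Each term in $R(\Pi\psi)$ is similarly bounded by $Ch^2(\|\s\|_2 + \|\g\|_2)\|\phi\|_2$, using \eqref{approx-3}--\eqref{approx-5} with $r = 2$ for the approximation pieces and the improved bounds \eqref{h2-theta}, \eqref{h2-delta} for the quadrature pieces, again after the $\Pi^{\RT}$ splitting of $\Pi\psi$. Combining everything, applying the elliptic regularity \eqref{elliptic-reg} to absorb $\|\phi\|_2$ on the right, and dividing by $\|Q^u_h u - u_h\|$ yields \eqref{superconv}. The main obstacle is sharpening the terms that were only $O(h)$ in the first-order proof, in particular $(A\Pi\s, \t - \Pi^{\RT}\t)_Q$ bounded via \eqref{lemma-orth-quad}, to $O(h^2)$; this requires exploiting that $\t = \Pi\psi$ is itself an interpolant of a function with additional regularity (rather than an arbitrary element of $\X_h$), together with the orthogonality \eqref{lemma-orth-quadrature} and a Bramble--Hilbert argument on the reference element to extract the extra power of $h$.
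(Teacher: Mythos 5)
Your overall architecture (duality via \eqref{aux}, testing the first error equation with an interpolant of $\psi=A^{-1}\epsilon(\phi)$, killing the gradient part by integration by parts using $\dvr(\Pi\s-\s_h)=0$, and routing the skew part through the third error equation) matches the paper's skeleton, and your choice $\t=\Pi\psi$ rather than the paper's $\t=\Pi^{\RT}A^{-1}\epsilon(\phi)$ is only a self-inflicted complication: with the paper's choice one has $\t-\Pi^{\RT}\t=0$, so the term $(A\Pi\s,\t-\Pi^{\RT}\t)_Q$ that you single out as ``the main obstacle'' never arises. The genuine gap is elsewhere: your treatment of the leading quadrature error. You peel off $\theta\bigl(A(\Pi\s-\s_h),\Pi\psi\bigr)$ and propose to bound its $\X_h^{\RT}$ part by \eqref{h2-theta}. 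But \eqref{h2-theta} produces $h^2\|\Pi\s-\s_h\|_{2,E}\|\chi\|_{1,E}$, and $\Pi\s-\s_h$ is a purely discrete error with no smooth function behind it, so $\|\Pi\s-\s_h\|_{2,E}$ can only be controlled by an inverse estimate, $\|\Pi\s-\s_h\|_{2,E}\le Ch^{-2}\|\Pi\s-\s_h\|_E$; the two powers of $h$ cancel and you are left with $C\|\Pi\s-\s_h\|\,\|\phi\|_2=O(h)$, not $O(h^2)$. The same obstruction defeats \eqref{theta-bound-1}, and it reappears when your use of the third error equation generates $\delta(\s_h,\cdot)$ terms with the discrete stress in the regular slot.

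This is precisely the difficulty the paper's argument in \eqref{super-ineq4}--\eqref{super-ineq12} is built to circumvent, and that mechanism is absent from your outline. The paper never separates the quadrature error from $(A(\Pi\s-\s_h),\Pi^{\RT}A^{-1}\epsilon(\phi))_Q$. Instead it freezes $A$ at its element mean and replaces $\phi$ by a linear $\phi_1$ (each substitution costs $O(h)\|\phi\|_{2,E}$, which pairs with the $O(h)$ factor $\|\Pi\s-\s_h\|_E$ from \eqref{error-P1-final1} to give $O(h^2)$), reducing the core term to $(\Pi\s-\s_h,\epsilon(\phi_1))_{Q,E}$ with $\epsilon(\phi_1)$ constant. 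The skew part of this is handled through \eqref{P1-error-eq-3} arranged so that only $\Pi\s$ (an interpolant of the smooth $\s$, for which $\|\Pi\s\|_{2,E}\le C\|\s\|_{2,E}$) appears inside a quadrature error; the gradient part is converted to an \emph{exact} integral using the orthogonality \eqref{lemma-orth-quadrature} and the exactness of the trapezoidal rule for linears on the reference element, and that exact integral then vanishes up to $O(h^2)$ by integration by parts against $\nabla\phi$. Without this exactness-plus-integration-by-parts step, or some substitute for it, the $O(h^2)$ rate for the leading term cannot be reached, so your proof as outlined does not close.
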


\begin{proof}
We present the proof for the MSMFE-1 method and note that the proof
for the MSMFE-0 method can be obtained by omitting the terms arising
due to the quadrature error $\delta(\cdot,\cdot)$.  We rewrite
the error equation \eqref{error-P1-eq1} as 
\begin{align*}
& (A(\Pi\s- \s_h),\t)_Q + (Q^u_h  u-u_h,\dvr\t) \\
& \qquad = (A(\Pi\s-\s),\t) -\theta(A\Pi\s,\t)
- (\g, \t)+(\g_h,\t)_Q + \gnp[g-\Pc_0 g]{\t n}_{\Gamma_D}.
\end{align*}
and choose $\t =\Pi^{\RT}A^{-1}\epsilon(\phi)$ to obtain
\begin{align}
\|Q^u_h  u-u_h\|^2 = &-(A(\Pi\s- \s_h),\Pi^{\RT}A^{-1}\epsilon(\phi))_Q  
+ (A(\Pi\s-\s),\Pi^{\RT}A^{-1}\epsilon(\phi))
-\theta(A\Pi\s,\Pi^{\RT}A^{-1}\epsilon(\phi)) \nonumber \\
& - (\g, \Pi^{\RT}A^{-1}\epsilon(\phi))+(\g_h,\Pi^{\RT}A^{-1}\epsilon(\phi))_Q.
\label{super-ineq1}
\end{align}
For the second term on the right in \eqref{super-ineq1}, using
\eqref{approx-3} and \eqref{h1-continuity-rt}, we have
\begin{equation}\label{bound-term2}
|(A(\Pi\s-\s),\Pi^{\RT}A^{-1}\epsilon(\phi))| \le C h^2 \|\s\|_2\|\phi\|_2.
\end{equation}
The third term on the right in \eqref{super-ineq1} is bounded using 
\eqref{h2-theta}, \eqref{h1-continuity-bdm} and \eqref{h1-continuity-rt}:
\begin{align}
|\theta(A\Pi\s,\Pi^{\RT}A^{-1}\epsilon(\phi))|  \leq C\sum_{E\in \Tc_h} h^2 \|A\Pi\s\|_{2,E}\|\Pi^{\RT}A^{-1}\epsilon(\phi)\|_{1,E} \leq Ch^2\|\s\|_{2}\|\phi\|_2.\label{super-ineq2}
\end{align}
The first term on the right in \eqref{super-ineq1} can be manipulated as follows:
\begin{align}
& (A(\Pi\s- \s_h),\Pi^{\RT}A^{-1}\epsilon(\phi))_{Q,E}\nonumber \\
&\qquad 
= ((A-\bar A)(\Pi\s-\s_h),\Pi^{\RT}A^{-1}\epsilon(\phi))_{Q,E} 
+ (\bar A(\Pi\s-\s_h),\Pi^{\RT}(A^{-1}-\bar A^{-1})\epsilon(\phi))_{Q,E} 
\nonumber \\
&\qquad
 + (\bar A(\Pi\s-\s_h),\Pi^{\RT}\bar A^{-1}(\epsilon(\phi)-\epsilon(\phi_1)))_{Q,E}+ (\bar A(\Pi\s-\s_h),\Pi^{\RT}\bar A^{-1}\epsilon(\phi_1))_{Q,E}
\equiv \sum_{k=1}^4 I_k, \label{super-ineq4}
\end{align}
where $\bar A$ is the mean value of $A$ on $E$ and $\phi_1$ is a 
linear approximation of $\phi$ such that, see \cite{ciarlet2002finite},
\begin{align}
\|\phi-\phi_1\|_E\leq Ch^2\|\phi\|_{2,E}, 
\qquad \|\phi-\phi_1\|_{1,E}\leq Ch\|\phi\|_{2,E}. \label{lin-approx}
\end{align}
Using \eqref{approx-1}, \eqref{lin-approx}, and \eqref{h1-continuity-rt}, we
have
\begin{equation}\label{super-ineq5}
|I_1| + |I_2| + |I_3| \le C h \|\Pi\s-\s_h\|_E\|\phi\|_{2,E}.
\end{equation}
For the last term on the right in \eqref{super-ineq4}, we first note that
for a constant tensor $\t_0$, $\hat\t_0 = J_E \t_0 \DF_E^{-T} \in \Xh^{\RT}(\Eh)$,
so using \eqref{Pi-Piola} we have
\begin{equation}\label{Pi-const}
\Pi^{\RT}\t_0 = \frac{1}{J_E} \hat\Pi^{\RT}\hat\t_0 \DF_E^T
= \frac{1}{J_E} \hat\t_0 \DF_E^T = \t_0.
\end{equation}
Therefore 
\begin{equation}\label{I4}
I_4 = (\Pi\s-\s_h,\epsilon(\phi_1))_{Q,E} = 
(\Pi\s-\s_h,\nabla\phi_1)_{Q,E} - (\Pi\s-\s_h,\skew(\nabla\phi_1))_{Q,E}
\end{equation}
For the second term on the right in \eqref{I4} we write
\begin{align}
(\Pi\s-\s_h,\skew(\nabla\phi_1))_{Q,E} 
& = (\Pi\s-\s_h,\skew(\nabla\phi_1) - Q_h^{\gamma}\skew(\nabla\phi_1))_{Q,E}
+ (\Pi\s-\s_h,Q_h^{\gamma}\skew(\nabla\phi_1))_{Q,E} \nonumber \\
& \le C h \|\Pi\s-\s_h\|_E\|\phi\|_{2,E} 
+ |(\Pi\s-\s_h,Q_h^{\gamma}\skew(\nabla\phi_1))_{Q,E}|, \label{I4-1}
\end{align}
using \eqref{approx-2} for the inequality. For the last term above, 
using the error equation
\eqref{P1-error-eq-3}, we write
\begin{align}
(\Pi\s-\s_h,Q_h^{\gamma}\skew(\nabla\phi_1))_{Q,E} & = 
(\Pi\s,Q_h^{\gamma}\skew(\nabla\phi_1))_{Q,E} 
- (\Pi\s,Q_h^{\gamma}\skew(\nabla\phi_1))_{E} \nonumber \\
& \quad +
(\Pi\s-\s,Q_h^{\gamma}\skew(\nabla\phi_1))_{E} 
\le C h^2 \|\s\|_{2,E}\|\phi\|_{2,E}, \label{I4-2}
\end{align}
using \eqref{h2-theta} and \eqref{approx-3} for the inequality. 

We next bound the first term on the right in \eqref{I4}. Using that
$\nabla \phi_1 = \hat{\nabla} \hat{\phi}_1 \DF^{-1}$, we write
\begin{equation}\label{I4-3}
(\Pi\s-\s_h,\nabla\phi_1)_{Q,E} = 
(\hat{\Pi}\sh-\sh_h,\hat{\nabla}\hat{\phi}_1)_{\hat{Q},\Eh}.
\end{equation}
We note that $\hat{\phi}_1$ is bilinear. 	
Let $\tilde{\phi}_1$ be the linear part of $\hat{\phi}_1$. Then we have
	\begin{align}
	(\hat{\Pi}\sh-\sh_h,\hat{\nabla}\hat{\phi}_1)_{\hat{Q},\Eh} 
= (\hat{\Pi}\sh-\sh_h,\hat{\nabla}(\hat{\phi}_1-\tilde{\phi}_1))_{\hat{Q},\Eh} 
+ (\hat{\Pi}\sh-\sh_h,\hat{\nabla}\tilde{\phi}_1)_{\hat{Q},\Eh}. \label{I4-4}
	\end{align}
It follows from \eqref{mapping} that
$[\hat{\nabla}(\hat{\phi}_1-\tilde{\phi}_1)]_i 
= ((\r_{34}-\r_{21})\cdot [\nabla \phi_1\circ F_E]_i)\begin{pmatrix} 
\hat{y}\\ \hat{x} \end{pmatrix}$, $i=1,2$. Hence, \eqref{h2-parall} implies
\begin{align}
|(\hat{\Pi}\sh-\sh_h,\hat{\nabla}(\hat{\phi}_1-\tilde{\phi}_1))_{\hat{Q},\Eh}|
\leq Ch^2\|\hat{\Pi}\sh-\sh_h\|_{\Eh}\|\nabla\phi\circ F_E\|_{\Eh} 
\leq Ch\|\Pi\s-\s_h\|_{E}\|\phi\|_{1,E}, \label{super-ineq8}
\end{align}
where we used \eqref{scaling-1} in the last inequality. For the
last term in \eqref{I4-4}, 
using \eqref{lemma-orth-quadrature} and the exactness of the quadrature 
rule for linear functions, we obtain
\begin{align}
(\hat{\Pi}\sh-\sh_h,\hat{\nabla}\tilde{\phi}_1)_{\hat{Q},\Eh} &=  (\hat{\Pi}^{\RT}(\hat{\Pi}\sh-\sh_h),\hat{\nabla}\tilde{\phi}_1)_{\hat{Q},\Eh} = (\hat{\Pi}^{\RT}(\hat{\Pi}\sh-\sh_h),\hat{\nabla}\tilde{\phi}_1)_{\Eh} \nonumber \\
&= (\hat{\Pi}^{\RT}(\hat{\Pi}\sh-\sh_h),\hat{\nabla}(\tilde{\phi}_1-\hat{\phi}_1))_{\Eh} + (\hat{\Pi}^{\RT}(\hat{\Pi}\sh-\sh_h),\hat{\nabla}\hat{\phi}_1)_{\Eh}.\label{super-ineq9}
\end{align}
We bound the first term on the right in  \eqref{super-ineq9} similarly to
\eqref{super-ineq8}:
\begin{align}
|(\hat{\Pi}^{\RT}(\hat{\Pi}\sh-\sh_h),\hat{\nabla}(\tilde{\phi}_1-\hat{\phi}_1))_{\Eh}| 
\leq Ch^2\|\hat{\Pi}\sh-\sh_h\|_{\Eh}\|\nabla\phi\circ F_E\|_{\Eh} 
\leq Ch\|\Pi\s-\s_h\|_{E}\|\phi\|_{1,E}. \label{super-ineq10}
\end{align}
Combining \eqref{super-ineq4}--\eqref{super-ineq10} and summing over the 
elements, we obtain
	\begin{align}
|(A(\Pi\s- \s_h),\Pi^{\RT}A^{-1}\epsilon(\phi))_{Q}|
\leq  C(h\|\Pi\s-\s_h\| + h^2 \|\s\|_2)\|\phi\|_{2} 
+ \Big|\sum_{E\in\Tc_h}(\Pi^{\RT}(\Pi\s-\s_h),\nabla\phi_1)_{E}\Big|. 
\label{super-ineq11}
\end{align}
For the last term above, noting that integration by parts, \eqref{error-P1-eq9-1},
\eqref{rt-operator1}, $\phi=0$ on $\Gamma_D$, 
and $(\Pi\s-\s_h)n =0$ on $\Gamma_N$ imply 
$ \sum_{E\in \Tc_h}(\Pi^{\RT}(\Pi\s-\s_h),\nabla\phi)_{E} = 0$, we have 
\begin{align}
\Big|\sum_{E\in\Tc_h}(\Pi^{\RT}(\Pi\s-\s_h),\nabla\phi_1)_{E}\Big| 
= \Big|\sum_{E\in\Tc_h}(\Pi^{\RT}(\Pi\s-\s_h),\nabla(\phi_1-\phi))_{E}\Big| 
&\leq Ch\|\Pi\s-\s_h\|\|\phi\|_2. \label{super-ineq12}
\end{align}

It is left to bound the last two terms on the right in \eqref{super-ineq1}.
We rewrite them as follows:
\begin{align}
& - (\g, \Pi^{\RT}A^{-1}\epsilon(\phi))+(\g_h,\Pi^{\RT}A^{-1}\epsilon(\phi))_Q
\nonumber \\
& \qquad 
= -\delta(\Pi^{\RT}A^{-1}\epsilon(\phi),Q^{\g}_h \g) 
	-(\g-Q^{\g}_h \g, \Pi^{\RT}A^{-1}\epsilon(\phi)) 
+ (\g_h - Q^{\g}_h \g,\Pi^{\RT}A^{-1}\epsilon(\phi))_Q. \label{super-ineq16}
\end{align}
For the first term on the right-hand side we use 
\eqref{h2-delta}, \eqref{h1-continuity-rt}, and \eqref{h1-continuity-l2}:
\begin{align}
|\delta( \Pi^{\RT}A^{-1}\epsilon(\phi),Q^{\g}_h \g)| 
\leq C\sum_{E\in \Tc_h} h^2 \|\Pi^{\RT}A^{-1}\epsilon(\phi)\|_{1,E}
\|Q^{\g}_h \g\|_{2,E} 
\leq C h^2 \|\phi\|_{2}\|\g\|_{2}. \label{super-ineq17}
\end{align}
The second term on the right in \eqref{super-ineq16} is
bounded using the symmetry of $A^{-1}\epsilon(\phi)$,
\eqref{approx-2} and \eqref{approx-4}:
\begin{align}
|(\g-Q^{\g}_h \g,\Pi^{\RT}A^{-1}\epsilon(\phi))| 
= |(\g-Q^{\g}_h \g, \Pi^{\RT}A^{-1}\epsilon(\phi)-A^{-1}\epsilon(\phi))|  
\leq Ch^2\|\g\|_1\|\phi\|_2. \label{super-ineq18}
\end{align}
For the last term in \eqref{super-ineq16} we have
\begin{align}
(\g_h-Q_h^{\g}\g,\Pi^{\RT}A^{-1}\epsilon(\phi))_{Q} 
& = (\g_h-Q_h^{\g}\g, \Pi^{\RT}(A^{-1}-\bar A^{-1})\epsilon(\phi))_{Q}
+(\g_h-Q_h^{\g}\g, \Pi^{\RT}\bar A^{-1}(\epsilon(\phi)-\epsilon(\phi_1)))_{Q}
\nonumber \\
&\quad  + (\g_h-Q_h^{\g} \g, \Pi^{\RT}\bar A^{-1}\epsilon(\phi_1))_{Q}.
\label{super-ineq19}
\end{align}
We bound the first two terms on the right in \eqref{super-ineq19} 
similarly to $I_2$ and $I_3$ in \eqref{super-ineq5}:
\begin{multline}
|(\g_h-Q_h^{\g}\g, \Pi^{\RT}(A^{-1}-\bar A^{-1})\epsilon(\phi))_{Q} 
+(\g_h-Q_h^{\g}\g,\Pi^{\RT}\bar A^{-1}(\epsilon(\phi)-\epsilon(\phi_1)))_{Q}| 
\leq Ch\|\g_h-Q_h^{\g}\g\|\|\phi\|_{2}.
\label{super-ineq20}
\end{multline}
For the last term in \eqref{super-ineq19}, using \eqref{Pi-const} and the symmetry
of $\bar A^{-1}\epsilon(\phi_1)$, we have
\begin{align}
(\g_h-Q_h^{\g}\g,\Pi^{\RT} \bar A^{-1}\epsilon(\phi_1))_{Q} 
= (\g_h-Q_h^{\g}\g, \bar A^{-1}\epsilon(\phi_1))_{Q} =  0. \label{super-ineq21}
\end{align}
The assertion of the theorem follows from combining 
\eqref{super-ineq1}--\eqref{super-ineq21} 
and using \eqref{error-P1-final1}.
\end{proof}

\section{Numerical results}

In this section we present numerical results that verify the
theoretical results from the previous sections. We used deal.II finite
element library \cite{dealii} for the implementation of the method.
We consider a homogeneous and isotropic body,
$$ A\sigma = \frac{1}{2\mu} \left( \sigma 
- \frac{\lambda}{2\mu + 2\lambda}\operatorname{tr}(\sigma)I \right), 
$$
where $I$ is the $2 \times 2$ identity matrix and $\mu > 0$, $\lambda > -\mu$
are the Lam\'{e} coefficients. We consider $\Omega = (0,1)^2$ and the 
elasticity problem \eqref{elast-1}--\eqref{elast-2}
with Dirichlet boundary conditions and exact solution
\cite{arnold2015mixed}
$$
u _0 = \begin{pmatrix} \cos(\pi x)\sin(2\pi y) \\ \cos(\pi y)\sin(\pi x) \end{pmatrix}.
$$
The Lam\`{e} coefficients are chosen as $\lambda=123,\, \mu=79.3$. 

We study the convergence of the MSMFE-1 method on three different
types of grids. For the first test, we use the sequence of square
meshes generated by sequential uniform refinement of an initial mesh
with characteristic size $h=1/2$, see Figure~\ref{fig:1}.  For the
second test, an initial general quadrilateral grid is used, and a
sequence of meshes is obtained by sequential splitting of each element
into four. This refinement procedure produces $h^2$-parallelogram
grids, see Figure~\ref{fig:2}, where the initial coarse grid is also
shown. For the third test, we consider a sequence of smooth
quadrilateral meshes. Each mesh is produced by applying a smooth map $
\mathbf{x} = \hat{\mathbf{x}} + 0.1 \sin(2\pi \hat x)\sin(2\pi \hat
y) \begin{pmatrix} 1\\1 \end{pmatrix}$ to a uniformly refined square
mesh, starting with $h=1/2$, see Figure~\ref{fig:3}. We note that the
grids in the first and third tests satisfy both the stability
condition \ref{M2} and the $h^2$-parallelogram condition
\eqref{h2-parall}. The grids in the second test satisfy
\eqref{h2-parall}, but may violate \ref{M2} along the edges of the
initial coarse grid. However, we further note that \ref{M2} is not
needed on parallelograms and the elements here are
$h^2$-parallelograms.

The computed solutions for tests 1-3 are shown in
Figures~\ref{fig:1}--\ref{fig:3}, respectively. The solutions are
similar despite the different types of grids. The highly distorted
elements in the third test do not affect the quality of the solution.
The convergence results are presented in
Tables~\ref{tab:1}--\ref{tab:3}.  We observe at least first order of
convergence for all variables, as predicted in \eqref{msmfe-error}, as
well as superconvergence of the displacement error evaluated at the
cell centers \eqref{superconv}.

\begin{figure}[ht!]
	\centering
	\begin{subfigure}[b]{0.23\textwidth}
		\includegraphics[width=\textwidth]{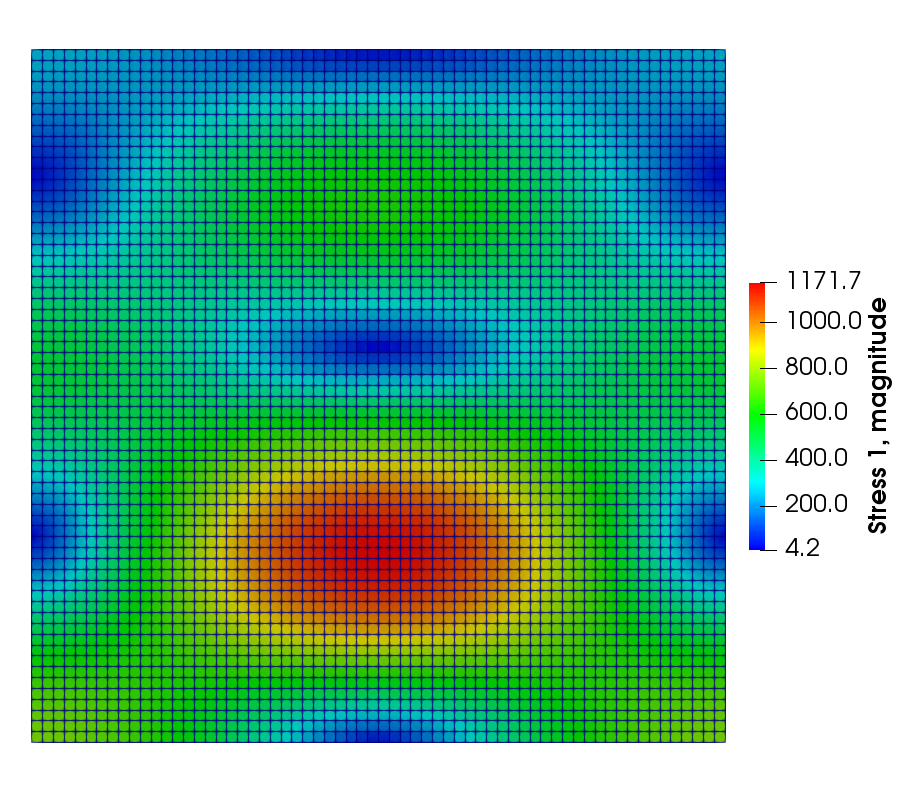}
		\caption{$x$-stress}
		\label{fig:1_1}
	\end{subfigure}
	\begin{subfigure}[b]{0.23\textwidth}
		\includegraphics[width=\textwidth]{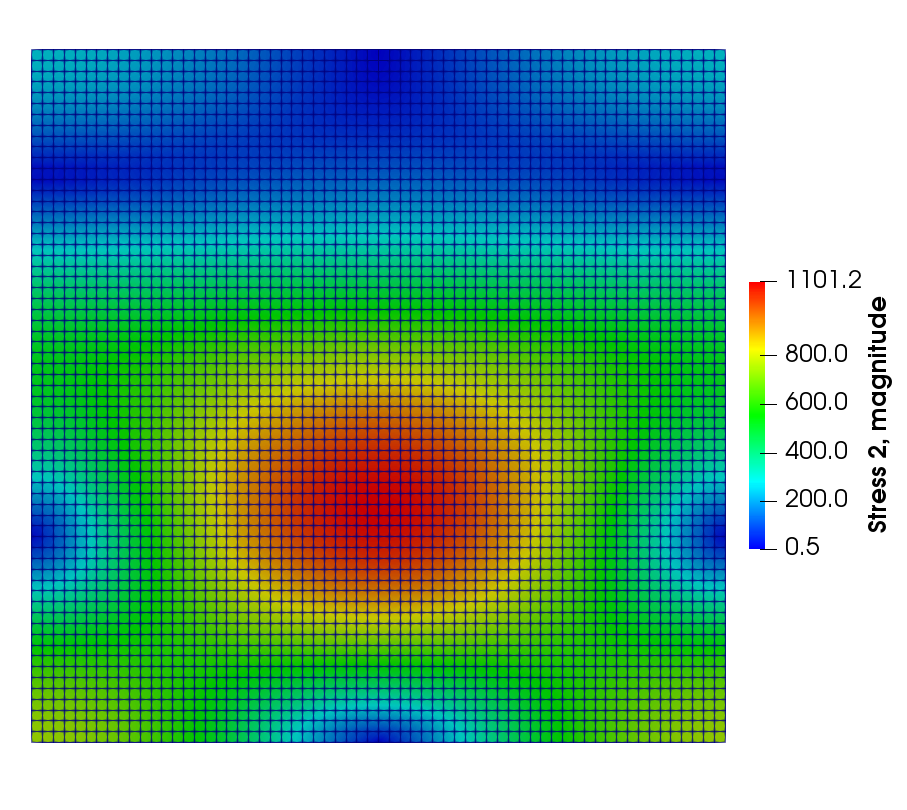}
		\caption{$y$-stress}
		\label{fig:1_2}
	\end{subfigure}
	\begin{subfigure}[b]{0.23\textwidth}
		\includegraphics[width=\textwidth]{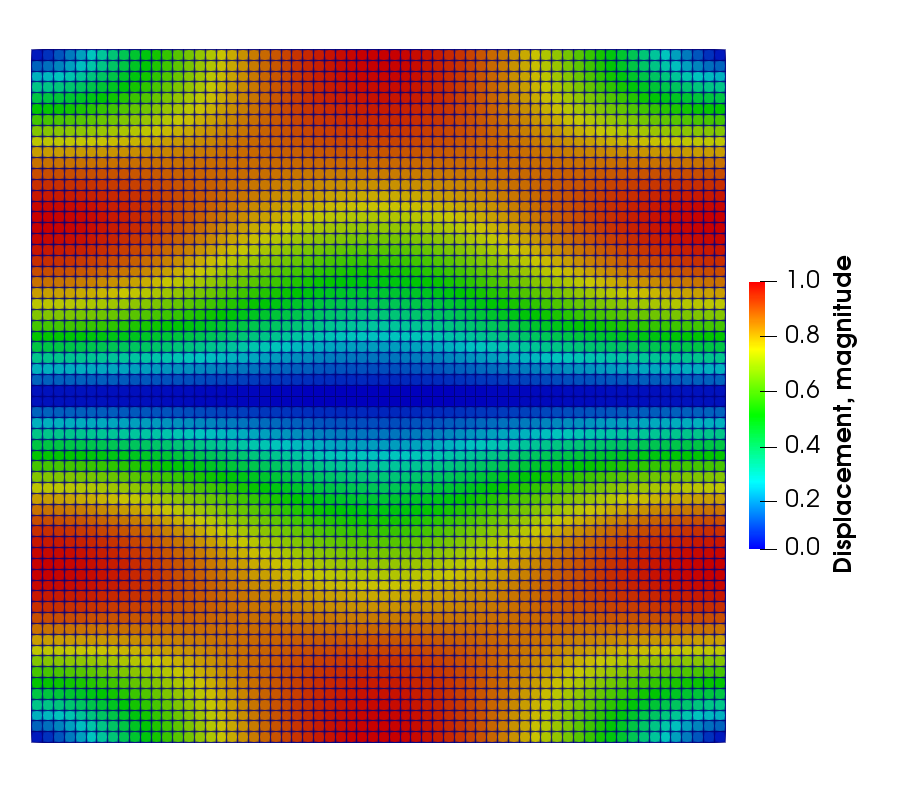}
		\caption{Displacement}
		\label{fig:1_3}
	\end{subfigure}
	\begin{subfigure}[b]{0.23\textwidth}
		\includegraphics[width=\textwidth]{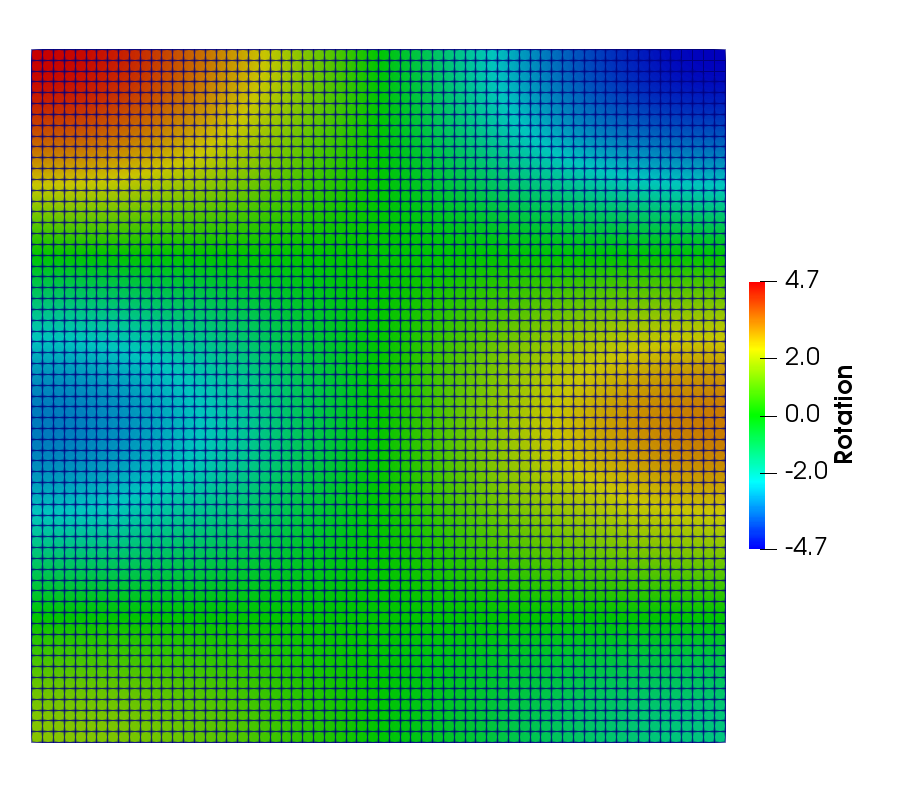}
		\caption{Rotation}
		\label{fig:1_4}
	\end{subfigure}
	\caption{Computed solution on a square mesh, $h = 1/64$.}\label{fig:1}
\end{figure}

\begin{figure}[ht!]
	\centering
	\begin{subfigure}[b]{0.23\textwidth}
		\includegraphics[width=\textwidth]{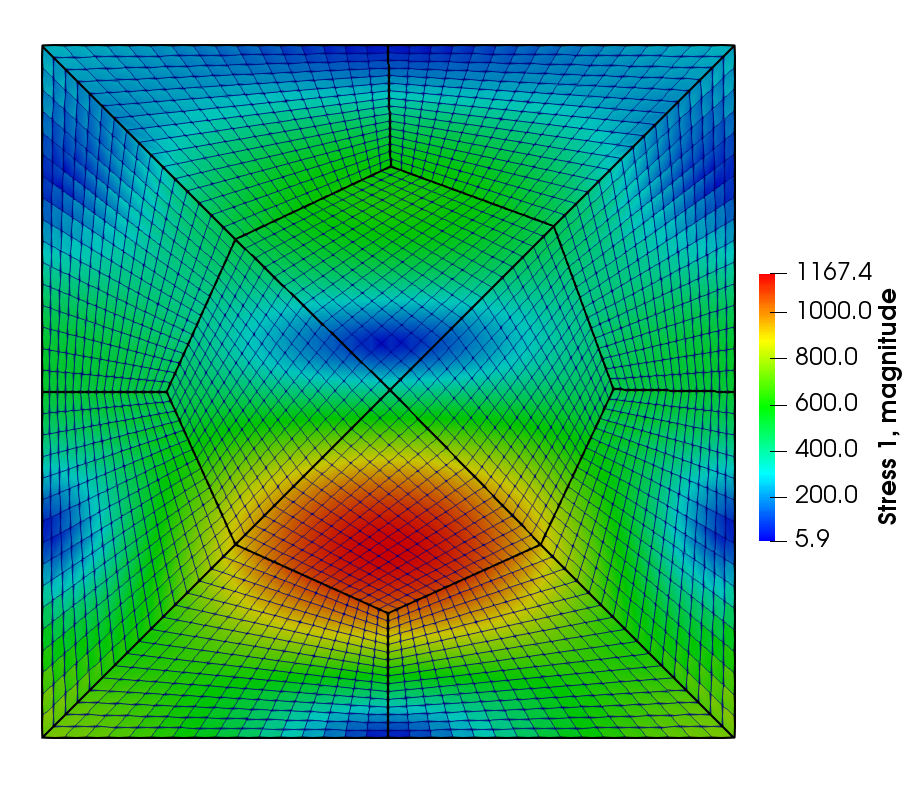}
		\caption{$x$-stress}
		\label{fig:2_1}
	\end{subfigure}
	\begin{subfigure}[b]{0.23\textwidth}
		\includegraphics[width=\textwidth]{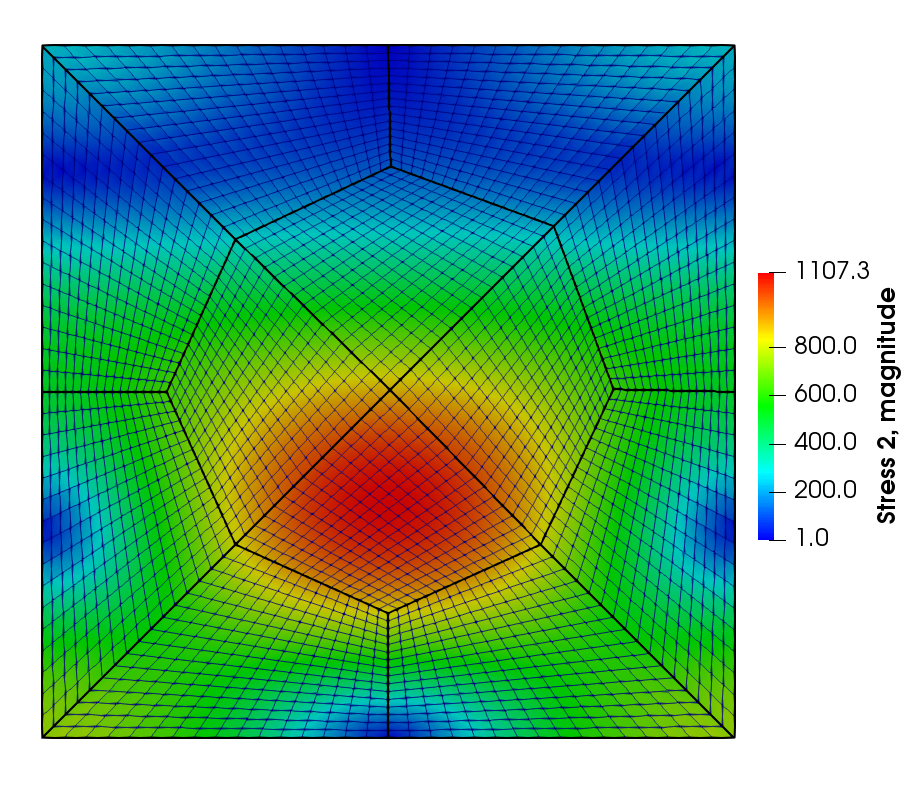}
		\caption{$y$-stress}
		\label{fig:2_2}
	\end{subfigure}
	\begin{subfigure}[b]{0.23\textwidth}
		\includegraphics[width=\textwidth]{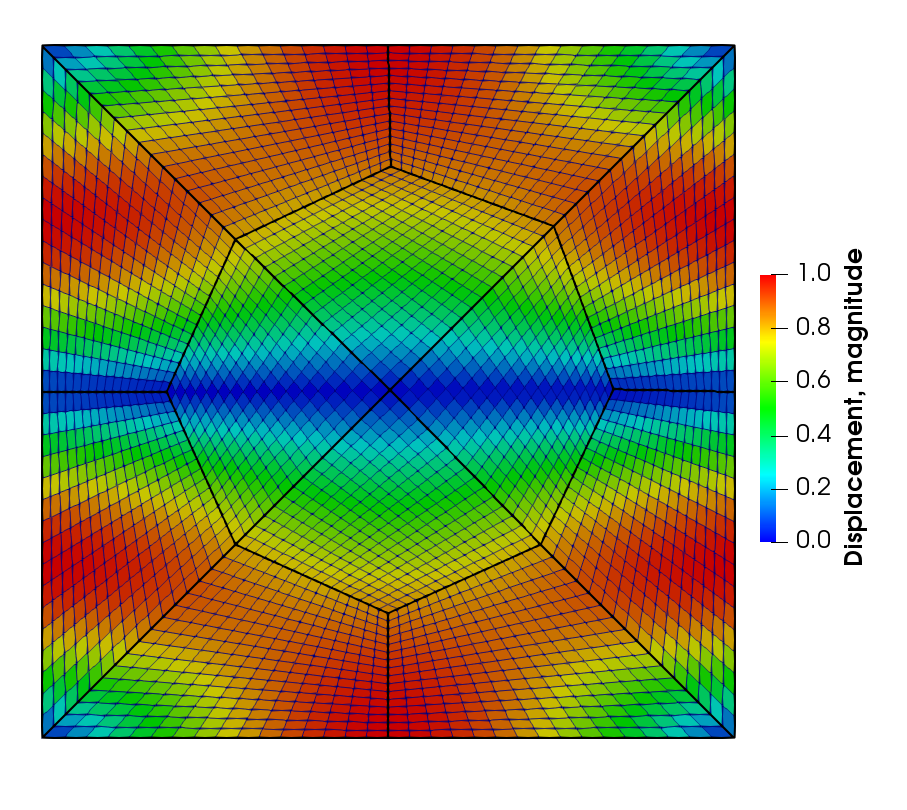}
		\caption{Displacement}
		\label{fig:2_3}
	\end{subfigure}
	\begin{subfigure}[b]{0.23\textwidth}
		\includegraphics[width=\textwidth]{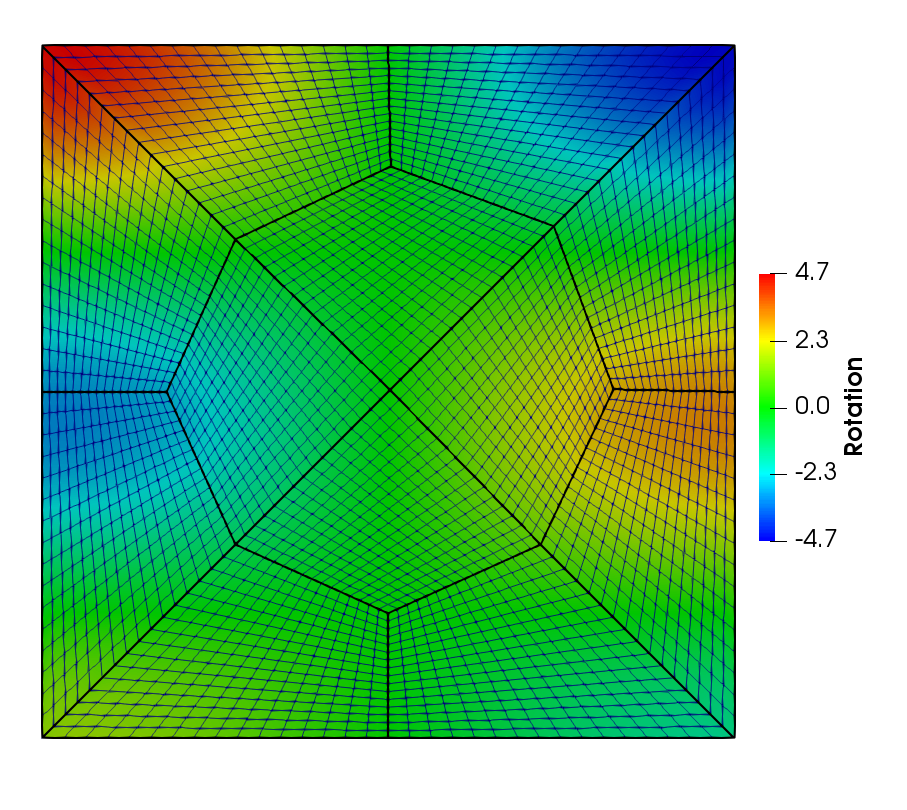}
		\caption{Rotation}
		\label{fig:2_4}
	\end{subfigure}
	\caption{Computed solution on a $h^2$-parallelogram mesh, $h = 1/32$.}\label{fig:2}
\end{figure}

\begin{figure}[ht!]
	\centering
	\begin{subfigure}[b]{0.23\textwidth}
		\includegraphics[width=\textwidth]{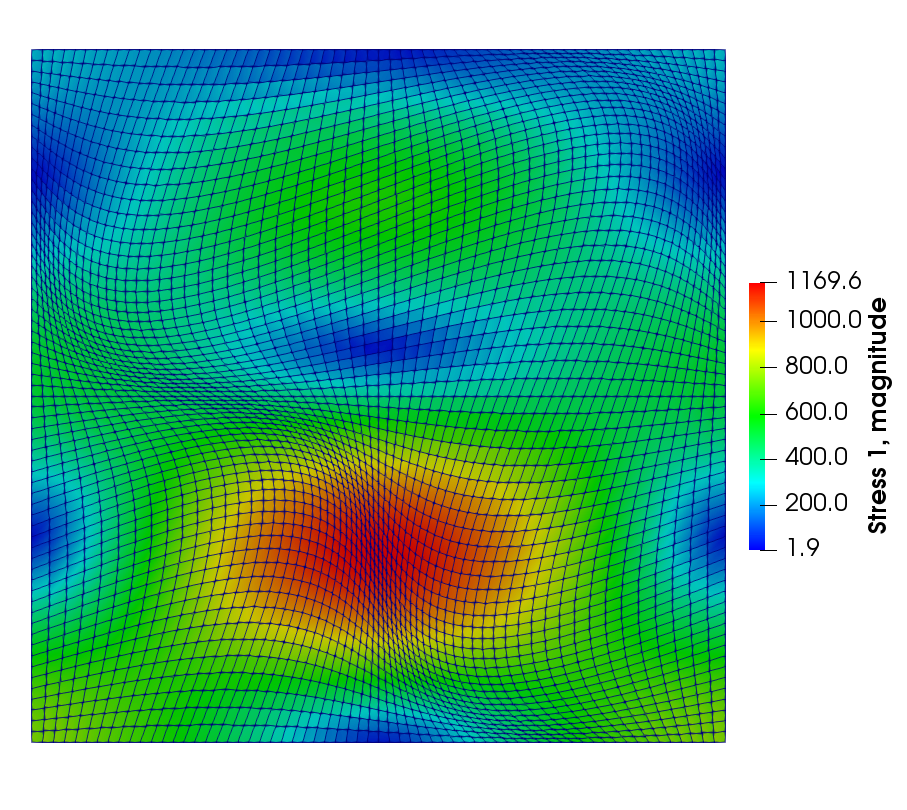}
		\caption{$x$-stress}
		\label{fig:3_1}
	\end{subfigure}
	\begin{subfigure}[b]{0.23\textwidth}
		\includegraphics[width=\textwidth]{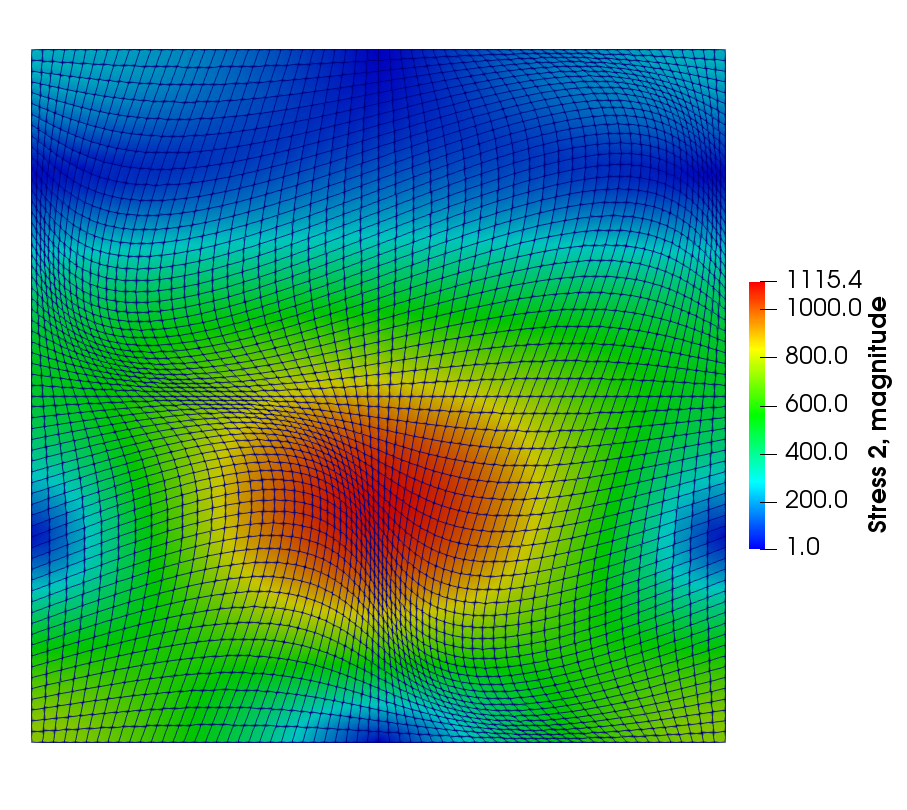}
		\caption{$y$-stress}
		\label{fig:3_2}
	\end{subfigure}
	\begin{subfigure}[b]{0.23\textwidth}
		\includegraphics[width=\textwidth]{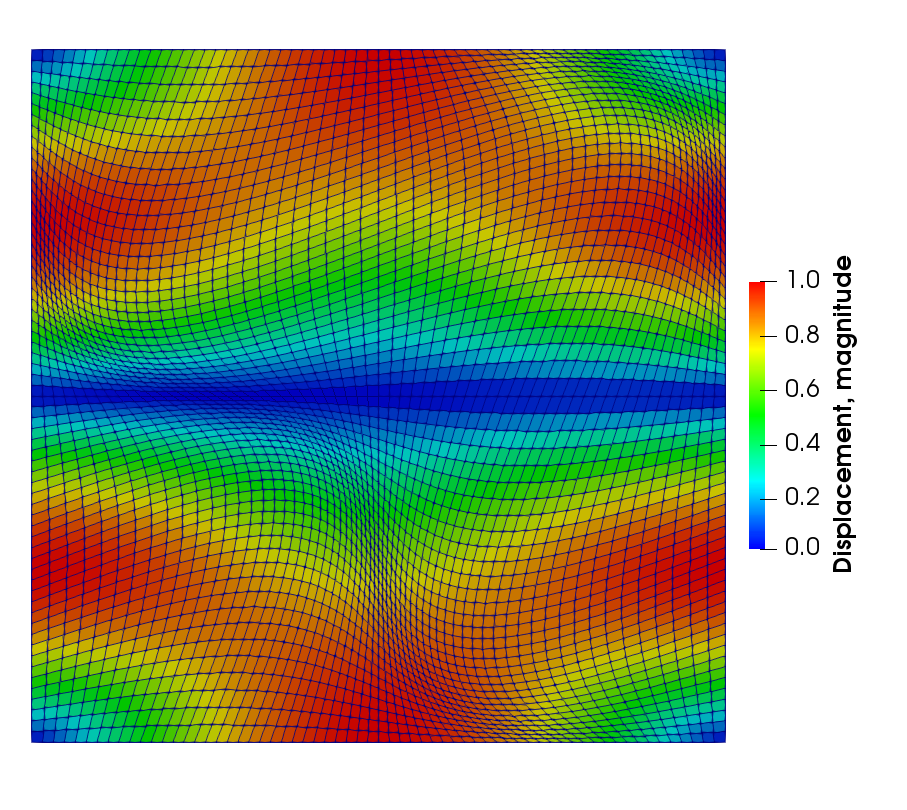}
		\caption{Displacement}
		\label{fig:3_3}
	\end{subfigure}
	\begin{subfigure}[b]{0.23\textwidth}
		\includegraphics[width=\textwidth]{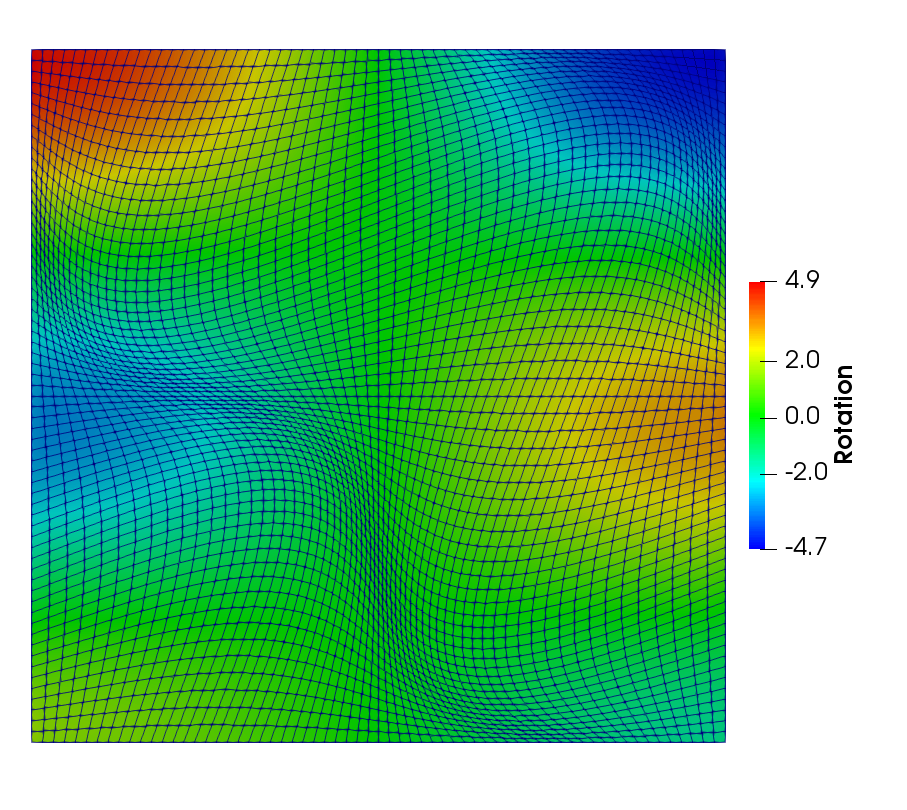}
		\caption{Rotation}
		\label{fig:3_4}
	\end{subfigure}
	\caption{Computed solution on a smooth quadrilateral mesh, $h=1/64$.}\label{fig:3}
\end{figure}

\begin{table}[ht!]
	\begin{center}
		\begin{tabular}{r|cc|cc|cc|cc|cc} \hline
			& 
			\multicolumn{2}{c|}{$ \|\sigma - \sigma_h\| $} & 
			\multicolumn{2}{c|}{$ \|\dvr(\sigma - \sigma_h)\|$} & 
			\multicolumn{2}{c|}{$  \|u - u_h\| $} & 
			\multicolumn{2}{c|}{$ \|Q^u_hu - u_h\| $} & 
			\multicolumn{2}{c}{$ \|\g - \g_h\| $}\\ 
			$h$ & error & rate & error & rate & error & rate & error & rate & error & rate \\ \hline
			1/2	&	7.61E-01	&	-	&	9.73E-01	&	-	&	7.19E-01	&	-	&	4.76E-01	&	-	&	8.17E-01	&	-\\	
			1/4	&	3.74E-01	&	1.02	&	5.42E-01	&	0.84	&	4.56E-01	&	0.66	&	1.06E-01	&	2.17	&	3.91E-01	&	1.06\\	
			1/8	&	1.66E-01	&	1.17	&	2.72E-01	&	0.99	&	2.33E-01	&	0.97	&	2.76E-02	&	1.93	&	1.15E-01	&	1.77\\	
			1/16	&	7.91E-02	&	1.07	&	1.36E-01	&	1.00	&	1.17E-01	&	0.99	&	7.25E-03	&	1.94	&	3.043-02	&	1.92\\	
			1/32	&	3.90E-02	&	1.02	&	6.79E-02	&	1.00	&	5.86E-02	&	1.00	&	1.84E-03	&	1.98	&	7.75E-03	&	1.97\\	
			1/64	&	1.94E-02	&	1.01	&	3.39E-02	&	1.00	&	2.93E-02	&	1.00	&	4.62E-04	&	1.99	&	1.95E-03	&	1.99\\	\hline
		\end{tabular}
	\end{center}
	\caption{Convergence on square grids.} \label{tab:1} 
\end{table}

\begin{table}[ht!]
	\begin{center}
		\begin{tabular}{r|cc|cc|cc|cc|cc} \hline
			& 
			\multicolumn{2}{c|}{$ \|\sigma - \sigma_h\| $} & 
			\multicolumn{2}{c|}{$ \|\dvr(\sigma - \sigma_h)\|$} & 
			\multicolumn{2}{c|}{$  \|u - u_h\| $} & 
			\multicolumn{2}{c|}{$ \|Q^u_hu - u_h\| $} & 
			\multicolumn{2}{c}{$ \|\g - \g_h\| $}\\ 
			$h$ & error & rate & error & rate & error & rate & error & rate & error & rate \\ \hline
			1/3	&	5.92E-01	&	-	&	8.00E-01	&	-	&	5.35E-01	&	-	&	1.63E-01	&	-	&	5.98E-01	&	-\\	
			1/6	&	2.78E-01	&	1.09	&	4.06E-01	&	0.98	&	3.11E-01	&	0.78	&	1.05E-01	&	0.63	&	3.38E-01	&	0.82\\	
			1/12	&	1.37E-01	&	1.02	&	2.03E-01	&	1.00	&	1.58E-01	&	0.98	&	2.95E-02	&	1.84	&	1.38E-01	&	1.30\\	
			1/24	&	6.93E-02	&	0.98	&	1.01E-01	&	1.00	&	7.90E-02	&	1.00	&	8.04E-03	&	1.87	&	4.87E-02	&	1.50\\	
			1/48	&	3.50E-02	&	0.99	&	5.07E-02	&	1.00	&	3.95E-02	&	1.00	&	2.08E-03	&	1.95	&	1.66E-02	&	1.55\\	
			1/96	&	1.76E-02	&	0.99	&	2.53E-02	&	1.00	&	1.97E-02	&	1.00	&	5.26E-04	&	1.98	&	5.67E-03	&	1.55\\	\hline
		\end{tabular}
	\end{center}
	\caption{Convergence on $h^2$-parallelogram grids.} \label{tab:2} 
\end{table}

\begin{table}[ht!]
	\begin{center}
		\begin{tabular}{r|cc|cc|cc|cc|cc} \hline
			 & 
			\multicolumn{2}{c|}{$ \|\sigma - \sigma_h\| $} & 
			\multicolumn{2}{c|}{$ \|\dvr(\sigma - \sigma_h)\|$} & 
			\multicolumn{2}{c|}{$  \|u - u_h\| $} & 
			\multicolumn{2}{c|}{$ \|Q^u_hu - u_h\| $} & 
			\multicolumn{2}{c}{$ \|\g - \g_h\| $}\\ 
			$h$ & error & rate & error & rate & error & rate & error & rate & error & rate \\ \hline
			1/4	&	4.27E-01	&	-	&	6.22E-01	&	-	&	4.71E-01	&	-	&	1.64E-01	&	-	&	4.53E-01	&	-\\	
			1/8	&	2.22E-01	&	0.94	&	3.46E-01	&	0.85	&	2.68E-01	&	0.81	&	7.09E-02	&	1.21	&	2.14E-01	&	1.08\\	
			1/16	&	1.12E-01	&	0.99	&	1.78E-01	&	0.96	&	1.37E-01	&	0.97	&	2.51E-02	&	1.50	&	9.29E-02	&	1.21\\	
			1/32	&	5.61E-02	&	1.00	&	9.00E-02	&	0.99	&	6.84E-02	&	1.00	&	7.35E-03	&	1.77	&	3.21E-02	&	1.53\\	
			1/64	&	2.81E-02	&	1.00	&	4.51E-02	&	1.00	&	3.42E-02	&	1.00	&	1.94E-03	&	1.92	&	1.04E-02	&	1.63\\	
			1/128	&	1.40E-02	&	1.00	&	2.26E-02	&	1.00	&	1.71E-02	&	1.00	&	4.93E-04	&	1.98	&	3.41E-03	&	1.61\\	\hline
		\end{tabular}
	\end{center}
	\caption{Convergence on smooth quadrilateral grids.} \label{tab:3} 
\end{table}

\bibliographystyle{abbrv}
\bibliography{msmfe-quads}

\end{document}